\newtheorem{theorem}{Theorem}[section]
\newtheorem{corollary}[theorem]{Corollary}
\newtheorem{exercise}[theorem]{Exercise}
\newtheorem{lemma}[theorem]{Lemma}
\newtheorem{prop}[theorem]{Proposition}
\theoremstyle{definition}
\newtheorem{definition}[theorem]{Definition}
\newcommand{\PSH}{{\rm PSH}}
\newcommand{\idd}{{\bf 1}}
\newcommand{\capa}{{\rm Cap}}
\newcommand{\Tr}{{\rm Tr}}
\newcommand{\bR}{\mathbb{R}}
\newcommand{\R}{\mathbb{R}}
\newcommand{\Q}{\mathbb{Q}}
\newcommand{\f}{\varphi}
\newcommand{\p}{\psi}
\newcommand{\eps}{\varepsilon}
\newcommand{\e}{\varepsilon}
\numberwithin{equation}{section}
\begin{document}
 
\title[Singular Calabi-Yau metrics]{Singular Calabi-Yau metrics} 
\author{Eleonora Di Nezza}

\address{IMJ-PRG, Sorbonne Universit\'e \& DMA, \'Ecole Normale Sup\'erieure, Universit\'e PSL,  CNRS, Paris, France.}
\email{\href{mailto:eleonora.dinezza@imj-prg.fr}{eleonora.dinezza@imj-prg.fr}, \href{mailto:edinezza@dma.ens.fr}{edinezza@dma.ens.fr}}
\urladdr{\href{https://perso.pages.math.cnrs.fr/users/eleonora.di-nezza/}{https://perso.pages.math.cnrs.fr/users/eleonora.di-nezza/}}

\author{Chinh H. Lu}

\address{Institut Universitaire de France \& Univ Angers, CNRS, LAREMA, SFR MATHSTIC, 2 Bd de Lavoisier, 49000 Angers,  France.}

\email{\href{mailto:hoangchinh.lu@univ-angers.fr}{hoangchinh.lu@univ-angers.fr}}
\urladdr{\href{http://hoangchinh-lu.perso.math.cnrs.fr}{http://hoangchinh-lu.perso.math.cnrs.fr}}

\maketitle

\begin{abstract}
	These are notes of lectures given by the first named author during the CIME Summer school Calabi-Yau varieties. We survey known results concerning the complex Monge-Amp\`ere equations in Hermitian contexts obtained by many authors during the last fifteen years.
\end{abstract}

\tableofcontents

\section{Introduction}

The Monge-Amp\`ere operator appeared in K\"ahler geometry since the very beginning of the study of K\"ahler manifolds.
The Ricci curvature tensor of a K\" {a}hler manifold can indeed be expressed by using a complex Monge-Amp\`ere operator, which explains why the existence problem for K\" {a}hler-Einstein metrics reduces to the study of a complex Monge-Amp\`ere equation.
An impressive number of works have been devoted to these questions in the past fifty years, following Yau's solution of the Calabi conjecture in the late 70's \cite{Yau78}.

Pluripotential theory in the setting of compact K\"ahler manifolds has proven to be a very effective tool in the study of degeneration of those canonical metrics in geometrically motivated problems (see for example \cite{Kol98} \cite{BEGZ10} \cite{EGZ09}). Usually in such degenerate settings singular K\"ahler metrics do appear as limits of smooth ones. Then pluripotential theory provides a natural background for defining the volume forms associated to such metrics. More importantly it provides useful information on the behavior of the K\"ahler potentials. On the other hand the theory does not rely on strong geometric assumptions, as most of the results are local in nature. It is therefore natural to expect that at least some of the methods and applications carry through in the more general Hermitian setting.

The interest towards Hermitian versions of the complex Monge-Amp\`ere equation has grown rapidly in the last fifteen years. The first steps were laid down by the French school most notably by Cherrier \cite{Cher87}. In this paper the author followed Yau’s arguments \cite{Yau78} to get existence of smooth solutions of the Monge-Amp\`ere equation in the case of smooth data  under some geometric assumptions on the background hermitian metric. The main problem to overcome was to establish a priori estimates needed for the closedness part in the continuity method. The renewed interest towards Hermitian Monge-Amp\`ere equations came with the paper by Guan-Li \cite{GL10} and especially the ones by Tosatti-Weinkove \cite{TW10}, \cite{TW10a}. Guan and Li were able to solve the equation assuming geometric conditions (different than those from \cite{Cher87}), while the missing  $C^0$-uniform estimate was finally established without any assumptions in \cite{TW10}.

The main challenge in working with non-K\"ahler forms is the appearance of torsion terms from the application of Stokes' formula, which are difficult to control. Despite this, Dinew and Ko{\l}odziej \cite{DK12} have succeeded in establishing a weak form of the comparison principle referred to as the ``modified comparison principle''. Building on this, they derived a priori $L^\infty$-estimates for densities in $L^p$ with $p>1$, generalizing the estimates of Tosatti and Weinkove. The existence of weak bounded solutions was subsequently proven in the works of Ko{\l}odziej and Nguyen \cite{KN15Phong}, \cite{Ng16AIM}, \cite{KN19}. 

Over the past fifteen years, various methods for obtaining $L^{\infty}$-estimates have been developed by many authors.  B{\l}ocki \cite{Bl05China}, \cite{Bl11China} employed the Alexandroﬀ-
Bakelman-Pucci maximum principle and a local stability estimate due to Cheng and Yau. Sz\'ekelyhidi \cite{Sze18} extended B{\l}ocki's method, studying non-linear elliptic PDEs on compact Hermitian manifolds. In \cite{GPT23}, a PDE approach was introduced in the K\"ahler context, using an auxiliary Monge-Amp\`ere equation, drawing inspiration from the work of Chen and Cheng on constant scalar curvature metrics \cite{ChCh21a}. The same method was shown to be effective in the non-K\"ahler setting and for various other Hessian equations in \cite{GP24}.  Furthermore, in  \cite{GPTW}, it is shown that the method of \cite{GPT23} also applies in several degenerate settings including nef classes.

Significant contributions have also been given very recently in a series of papers \cite{GL24}, \cite{GL22}, \cite{GL23Crelle}, \cite{BGL24}. In particular, in \cite{GL23Crelle} and \cite{BGL24} the authors develop a new approach to $L^\infty$-estimates for degenerate complex Monge-Amp\`ere equations on complex manifolds, which has a ``local" nature and mainly relies on envelopes.

In this note we will follow the approach developed in \cite{GL23Crelle} and \cite{BGL24}. All results presented here are known, and we do not claim any originality. 

\noindent {\bf Acknowledgment.} We thank Q.T. Dang and the anonymous referees for carefully reading the note and giving many useful comments.

\section{The complex Monge-Amp\`ere operator}
We recall basic tools in pluripotential theory and direct the interested reader to the textbook \cite{GZbook} for a complete treatment. 
In the whole section, $\Omega$ is a bounded domain in $\mathbb C^n$.  

\subsection{Positive forms and positive currents}

Consider a real oriented manifold $M$ of dimension $m$.
Recall that a \emph{current} $T$ of degree $q$ (or dimension $m-q$) on $M$ is a continuous linear form on the vector space $\mathcal{D}^{m-q}(X)$ of smooth differential forms of
degree $m-q$ with compact support. We denote by $\mathcal{D}'_{m-q}(M)$  (or $ {\mathcal{D}^{'}}^{q}(M)$) the space of currents of degree $m-q$ on $M$, and by $\langle T, \eta \rangle$ the pairing between a test $m-q$-form $\eta$ and a current $T$ of degree $q$.
A first example of a current of degree $q$ is the \emph{current of integration} over a smooth closed oriented submanifold $Z$ of dimension $m-q$, which is denoted by $[Z]$ and defined as
 $$\langle [Z], \eta \rangle:=\int_Z \eta.$$
Observe that, given $\tau$ a $(m-q)$-form with coefficients in $L^1_{loc}(M)$, we can associate the current $T_\tau$ of degree $q$ defined as follows:
 $$\langle T_\tau, \eta \rangle:=\int_M \tau\wedge \eta.$$
As a fact, let us mention that, if we let $(x_1, \cdots, x_m)$ be local coordinates on an open subset $\Omega\subset M$, then every current of degree $q$ can be written in an unique way as 
\begin{equation}\label{eq: current}
T=\sum_{|I|=q} T_I dx_I,
\end{equation}
where $T_I$ are distributions of order $s$ on $\Omega$.
 
Given a current $T$ of degree $q$, the wedge product of $T$ with a smooth $p$-form $\beta$ is defined as
$$\langle T \wedge \beta, \eta \rangle:=\langle T, \beta \wedge \eta \rangle .$$
One can also define the exterior derivative $dT$ as the $(q+1)$-current satisfying
$$\langle dT, \eta \rangle:=(-1)^{q+1}\langle T, d\eta \rangle.$$ 
A current $T$ is then said to be \emph{closed} if $dT = 0$. We denote by $\{T\}$ the cohomology class
defined by the current $T$. By deRham's Theorem, the corresponding
cohomology vector space
$$H^{m-q}(M) :=\{{\rm{closed\, currents\, of \,degree}}\, q \} / \{ dS \, | \, S\, \mbox{current of degree}\, q-1 \}$$
is isomorphic to the one defined using closed smooth differential forms.

Let now $X$ be a complex manifold of complex dimension $n$. The decomposition of complex valued differential forms according to their bidegrees induces a decomposition at the level of currents. We say that a current $T$ is of bidegree $(p,q)$ if it is of degree $p+q$ and $\langle T, \eta\rangle=0$ for any test form $\eta$ of bidegree $(k,l)\neq (n-p, n-q)$. We denote by ${\mathcal{D}'}^{p,q}(X)$ the space of such currents, and by $H^{p,q}(X)$ the corresponding vector space of cohomology classes.\\ \indent In the complex case one can define a notion of positivity at the level of forms and currents.
Let $V$ be a complex vector space of dimension $n$ and $(z_1,...,z_n)$ coordinates on $V$. Observe that $V$ has a canonical orientation defined by the volume form
$$(idz_1\wedge d\bar{z}_1)\wedge...\wedge (idz_n\wedge d\bar{z}_n)$$ 
and a $(n,n)$-form on $V$ is said to be positive if and only if it is a positive multiple of the orientation form. A $(p,p)$-form $\eta$ is said to be \emph{positive} if for all $\alpha_j\in V^*$, $1\leq j\leq n-p$, we have that
$$\eta \wedge (i \alpha_1\wedge \bar{\alpha_1})\wedge...\wedge (i \alpha_{n-p}\wedge \bar{\alpha}_{n-p})$$
is a positive $(n,n)$-form. Equivalently, a form of bidegree $(p,p)$ is positive if and only if its restriction to every $p$-dimensional subspace $S\subset V$ is a positive volume form on $S$.\\
\indent The set of positive $(p,p)$-forms is a closed convex cone in $\bigwedge^{p,p}V^*$ and its dual cone in $\bigwedge^{n-p,n-p}V^*$ is the \emph{strongly positive cone}. A strongly positive $(q,q)$-form $\beta$ is a convex combination of forms of type
$$(i \alpha_1\wedge \bar{\alpha_1})\wedge\cdots \wedge (i \alpha_q\wedge \bar{\alpha_q})$$ 
with $\alpha_j\in V^*$, for $j=1,\cdots,q$. Of course, we are interested in the case $V=T_x X$. In this way, we are able to define, at each $x\in X$, a notion of positivity for smooth forms on $X$, and so we can then give a notion of positivity for currents. A current $T$ of bidimension $(p, p)$ is \emph{positive} if $\langle T, \beta\rangle\geq 0$ for all strongly positive test forms $\beta\in \mathcal{D}^{p,p}(X)$.\\
\indent Two extreme examples of positive currents are currents of integration along analytic subsets of dimension $p$ and  positive smooth differential forms of bidegree $(n-p, n-p)$.\\
Again as a fact, we mention that, any positive $(p,p)$-current $T$ is a real current of order $0$, i.e. in \eqref{eq: current} the coefficients are distributions of order $0$. More precisely, $T$ can be written (in local holomorphic coordinates in $\Omega\subset X$) as 
\begin{equation}\label{eq current1}
T=i^{(n-p)^2} \sum_{|I|=|J|=p} T_{I,J} dz_I \wedge d\bar{z}_J,
\end{equation}
where the coefficients $T_{I,J}$ are complex measures in $\Omega$ satisfying $\bar{T}_{I,J}=T_{J,I}$ and $T_{I,I}$ are positive Borel measures in $\Omega$.

\medskip
\indent Now, let $T$ be a positive closed current of bidegree $(1,1)$ on a compact manifold $X$. Then $T$ is locally given as $T=dd^c \f$ where $\f$ is a plurisubharmonic (psh for short) function. Note that, this cannot hold globally since the maximum principle ensures that the only psh functions on $X$ are the constants. 

In the text we use the normalization $d^c =  i(\bar \partial -\partial)$, so that $dd^c =2i\partial \bar \partial$.


\subsection{Plurisubharmonic functions} 
A function $u : \Omega \rightarrow \mathbb{R} \cup \{-\infty\}$ is called  plurisubharmonic (or psh) if it is upper semicontinuous and its restriction on each complex line intersecting $\Omega$ is subharmonic. In other words, the latter means that for all $z_0\in \Omega$, $w\in \mathbb{C}^n$, $r>0$ such that $z_0+tw \in \Omega$, for all $|t|\leq r$, we have 
\[
u(z_0) \leq \frac{1}{2\pi} \int_0^{2\pi} u(z_0+ rwe^{i\theta})d\theta. 
\]
We denote by $\PSH(\Omega)$ the set of plurisubharmonic functions. Such functions enjoy the following basic properties. 
\begin{prop}
\begin{enumerate}
  \item  If $u$ and $v$ are psh and $u=v$ almost everywhere then $u=v$ everywhere. 
		\item If $u$ and $v$ are psh then $u+v$, $\lambda u$ (with $\lambda>0$), $\max(u,v)$ are psh. 
  \item If $u$ is psh on $\Omega$ and if $\chi : I \rightarrow \mathbb{R}$ is a convex increasing function where $I$ is an interval containing $u(\Omega)$, then $\chi \circ u$ is psh as well.
    
		\item If $u$ is psh then the regularization $u\star \rho_{\eta}$ by convolution with smoothing kernels provides smooth psh functions decreasing to $u$. 
\item If $(u_k)$ is a decreasing sequence of psh functions, then their limit $u = \inf_k u_k$ is psh as well.

\item Consider $u$ psh on $\Omega$ and $v$ psh on a relatively compact subdomain $\Omega'  \subset \Omega$. If $u \geq v$ on $\partial \Omega'$ then the function $\max(u, v)$ (defined as simply $u$ outside of $\Omega'$) is psh on $\Omega$. 
\end{enumerate}
\end{prop}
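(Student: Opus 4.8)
The plan is to extract two classical facts and then dispatch the six items in increasing order of difficulty. The two facts are: \textbf{(i)} a psh function on $\Omega$ is subharmonic on $\Omega$ regarded as an open subset of $\mathbb{R}^{2n}=\bC^n$ (one integrates the defining inequality over all complex directions through a point, using that $w\mapsto e^{i\theta}w$ is a measure‑preserving rotation of the unit sphere of $\bC^n$), hence it lies in $L^1_{\mathrm{loc}}(\Omega)$ and is $\not\equiv-\infty$ on each connected component; \textbf{(ii)} for psh $u$, a point $z_0\in\Omega$, a vector $w\in\bC^n$ and $r>0$ small enough that $\{z_0+\zeta w:|\zeta|\le r\}\subset\Omega$, the circular mean $\mu_u(r):=\frac{1}{2\pi}\int_0^{2\pi}u(z_0+rwe^{i\theta})\,d\theta$ is nondecreasing in $r$ with $\mu_u(r)\to u(z_0)$ as $r\to0^+$, and likewise, up to a normalizing constant, for the solid averages $|B_r(z_0)|^{-1}\int_{B_r(z_0)}u$ over Euclidean balls. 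These are nothing more than the monotonicity of spherical means of subharmonic functions combined with upper semicontinuity, and once they are available each item below becomes essentially a one‑line argument.

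Items (2), (3), (5) and (1) are then formal. For (2) I would note that upper semicontinuity is stable under $+$, under multiplication by $\lambda>0$ and under $\max$, that the sub‑mean value inequality is additive, and that $\max(u,v)(z_0)\le\max(\mu_u(r),\mu_v(r))\le\mu_{\max(u,v)}(r)$. For (3), after setting $\chi(-\infty):=\lim_{t\to-\infty}\chi(t)$ (possibly $-\infty$), the function $\chi\circ u$ is upper semicontinuous because $\chi$ is continuous and nondecreasing, and applying $\chi$ to $u(z_0)\le\mu_u(r)$ and then Jensen's inequality for the convex $\chi$ gives $\chi(u(z_0))\le\chi(\mu_u(r))\le\frac{1}{2\pi}\int_0^{2\pi}\chi(u(z_0+rwe^{i\theta}))\,d\theta$. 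For (5), an infimum of upper semicontinuous functions is upper semicontinuous, and one passes to the limit in $u_k(z_0)\le\mu_{u_k}(r)$ by monotone convergence, using $u_k\downarrow u$ and $u_1\in L^1_{\mathrm{loc}}$. For (1), excluding the trivial case where both are $\equiv-\infty$ on a component, the equality $u=v$ a.e.\ forces every solid ball average of $u$ to equal the corresponding one of $v$, so by Fact (ii), $u(z_0)=\lim_{r\to0}|B_r(z_0)|^{-1}\int_{B_r(z_0)}u=\lim_{r\to0}|B_r(z_0)|^{-1}\int_{B_r(z_0)}v=v(z_0)$.

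For (4) I would take $\rho_\eta(z)=\eta^{-2n}\rho(|z|/\eta)$ with $\rho\ge0$ radial, supported in the unit ball, and $\int\rho=1$, so that on $\Omega_\eta:=\{z\in\Omega:\mathrm{dist}(z,\partial\Omega)>\eta\}$ the convolution $u\star\rho_\eta$ is well defined (Fact (i)) and smooth by differentiation under the integral sign; integrating the complex‑line sub‑mean value inequality for $u(z_0+\cdot)$ against $\rho_\eta$ and applying Fubini shows $u\star\rho_\eta$ is psh on $\Omega_\eta$. Monotonicity in $\eta$ would come from writing $u\star\rho_\eta(z)$ in polar coordinates as a positive average---with weights depending only on $s/\eta$---of the spherical means $s\mapsto|\partial B_s(z)|^{-1}\int_{\partial B_s(z)}u\,d\sigma$, which increase in $s$ by Fact (ii); a change of variables then yields $u\star\rho_{\eta'}\le u\star\rho_\eta$ for $0<\eta'\le\eta$. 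The same representation gives $u\star\rho_\eta(z)\ge u(z)$ for every $\eta$, while upper semicontinuity gives $\limsup_{\eta\to0}u\star\rho_\eta(z)\le u(z)$, so $u\star\rho_\eta\downarrow u$ pointwise.

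Finally, for (6) I would set $w:=\max(u,v)$ on $\Omega'$ and $w:=u$ on $\Omega\setminus\Omega'$; reading the hypothesis as $\limsup_{\Omega'\ni z\to\zeta}v(z)\le u(\zeta)$ for every $\zeta\in\partial\Omega'$ makes $w$ upper semicontinuous across $\partial\Omega'$, and clearly $w\ge u$ on all of $\Omega$. For the sub‑mean value inequality at a point $z_0$: if $z_0\notin\overline{\Omega'}$ small circles about $z_0$ stay where $w=u$; if $z_0\in\Omega'$ they stay where $w=\max(u,v)$, which is psh by (2); and if $z_0\in\partial\Omega'$ then $w(z_0)=u(z_0)\le\mu_u(r)\le\mu_w(r)$ because $w\ge u$. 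I expect the main obstacle to be the monotonicity assertion in (4)---one genuinely has to convert the convolution into a weighted integral of spherical means and follow the weights through the rescaling $z\mapsto z/\eta$---closely followed by the more routine but still delicate upper‑semicontinuity bookkeeping at $\partial\Omega'$ in (6); every other step is automatic once Facts (i) and (ii) are established.
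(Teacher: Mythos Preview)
The paper does not prove this proposition: it is stated as a list of basic background facts, with the reader directed to \cite{GZbook} for a complete treatment, and the text then moves directly to an illustrative remark about item~(6). Your proposal therefore supplies what the paper deliberately omits, and it does so correctly: the two preliminary facts (psh $\Rightarrow$ subharmonic $\Rightarrow L^1_{\mathrm{loc}}$, and monotonicity of circular/spherical means with convergence to the point value) are exactly the right scaffolding, and each of the six items is then handled by the standard argument. In particular your treatment of (4)---rewriting the convolution in polar coordinates as a weighted average of spherical means and tracking the weights under rescaling---and your careful reading of the boundary hypothesis in (6) as a $\limsup$ condition are the correct ways to fill in the two genuinely nontrivial points.
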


The property in (6) will be useful whenever we want to prove a local property. It is indeed convenient to modify the involved  functions near the boundary in order not to have boundary terms when we do apply integration by parts. For this, assume $u$ is psh and bounded $-M\leq u\leq -1$ in $\mathbb B=B(0,1)$. Let $A>1$. Then the function $\max(u,A(|z|^2-1))$ is psh in $\mathbb B$, it coincides with $u$ in $B(0,1/2)$ (if $3A \geq 4M$) and with $A(|z|^2-1)$ in $\mathbb B \setminus B(0,(1-1/A)^{1/2})$, since on $\partial B(0,(1-1/A)^{1/2})$ we have $A(|z|^2-1)= -1 \geq u$. Thus the new psh function $\max(u,A(|z|^2-1))$ is zero on $\partial \mathbb{B}$.
\medskip


For $u : \mathbb{C} \rightarrow \mathbb{R} \cup \{- \infty\}$ a $C^2$-smooth function, subharmonicity is equivalent to $\dfrac{\partial^2u}{\partial z \partial \bar{z}} \geq 0$. In higher dimension and for non-smooth functions, the following result holds: 
\begin{prop} \label{gz1.43}
   If $u \in \PSH(\Omega)$, then for any $\xi \in \mathbb{C}^n$, 
   \[\sum_{1\leq j, k\leq n} \xi_j \bar{\xi_k} \dfrac{\partial^2u}{\partial z_j \partial \bar{z}_k} \geq 0\]
   is a positive distribution in $\Omega$. 
   
    Conversely, if $U \in \mathcal{D}'(\Omega)$ is a distribution such that for all $\xi \in \mathbb{C}^n$, the distribution $\sum_{j, k}\xi_j \bar{\xi_k} \dfrac{\partial^2U}{\partial z_j \bar{z_k}}$ is positive, then there exists a unique $u \in \PSH(\Omega)$ such that $U \equiv T_u$. 
\end{prop}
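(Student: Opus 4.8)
The plan is to reduce both implications to the elementary smooth case by a regularization argument, using items (1), (4) and (5) of the basic properties of psh functions recalled above, together with the standard fact that a $C^2$ function is psh precisely when its complex Hessian matrix is everywhere positive semidefinite (which is exactly the content of the proposition for smooth $u$, proved by restricting to complex lines and applying the chain rule).

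For the first implication I would start from $u \in \PSH(\Omega)$, which — excluding the degenerate case $u \equiv -\infty$ on a component — is locally integrable by the sub-mean value inequality over balls. Set $u_\eta := u \star \rho_\eta$ on $\Omega_\eta := \{z \in \Omega : \mathrm{dist}(z,\partial\Omega) > \eta\}$. By item (4), $u_\eta$ is smooth and psh, so its complex Hessian matrix is positive semidefinite at every point, i.e.\ $\sum_{j,k} \xi_j \bar\xi_k\, \partial^2 u_\eta/\partial z_j\partial\bar z_k \ge 0$ pointwise for each $\xi \in \mathbb C^n$. Letting $\eta \to 0$ one has $u_\eta \to u$ in $L^1_{loc}(\Omega)$, hence $\partial^2 u_\eta/\partial z_j\partial\bar z_k \to \partial^2 u/\partial z_j\partial\bar z_k$ in $\mathcal D'(\Omega)$, and testing against any $0 \le \chi \in \mathcal D(\Omega)$ gives
\[
\Big\langle \sum_{j,k}\xi_j\bar\xi_k\, \frac{\partial^2 u}{\partial z_j\partial\bar z_k},\ \chi \Big\rangle = \lim_{\eta\to 0}\int \Big(\sum_{j,k}\xi_j\bar\xi_k\, \frac{\partial^2 u_\eta}{\partial z_j\partial\bar z_k}\Big)\chi \ \ge\ 0 ,
\]
which is the claimed positivity of the distribution.

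For the converse, given $U \in \mathcal D'(\Omega)$ with positive complex Hessian in every direction $\xi$, I would first let $\xi$ run over the standard basis of $\mathbb C^n$ and sum, obtaining that the distribution $\Delta U$ is positive, hence a positive Radon measure. By the classical regularity theorem for the Laplacian (a distribution with nonnegative Laplacian is represented almost everywhere by a unique subharmonic function), $U$ coincides a.e.\ with a subharmonic $u$, so $U \equiv T_u$. To upgrade $u$ from subharmonic to psh, mollify again: $u_\eta = u \star \rho_\eta$ is smooth and decreases to $u$ on $\Omega_\eta$. Since $T_u = U$, the distribution $\sum_{j,k}\xi_j\bar\xi_k\, \partial^2 u/\partial z_j\partial\bar z_k$ is the positive one from the hypothesis, and convolving a positive distribution with the nonnegative radial kernel $\rho_\eta$ produces a nonnegative smooth function; hence $\sum_{j,k}\xi_j\bar\xi_k\, \partial^2 u_\eta/\partial z_j\partial\bar z_k \ge 0$ pointwise, so the $C^2$ function $u_\eta$ is psh on $\Omega_\eta$. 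By item (5) the decreasing limit $u = \lim_{\eta\downarrow 0} u_\eta$ is psh, and it represents $U$. Uniqueness is then immediate from item (1): another psh representative of $U$ would agree with $u$ almost everywhere, hence everywhere.

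The main obstacle — really the only step that is not bookkeeping — is the regularity input in the converse: that a distribution whose Laplacian is a positive measure agrees almost everywhere with a subharmonic function. This is where one must leave formal manipulations and invoke potential theory, writing $U$ locally as the Newtonian potential of the measure $\Delta U$ plus a harmonic (hence smooth) correction. The remaining ingredients — that convolution commutes with differentiation and preserves the Hessian inequality, that the mollifications of a subharmonic function are smooth and decrease to it, and that items (1) and (5) then close the argument — are routine.
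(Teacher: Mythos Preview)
Your proof is correct, and the forward direction is identical to the paper's: regularize, reduce to the smooth case via restriction to complex lines, and pass to the limit.

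For the converse the two arguments differ slightly in organization. You first invoke the classical Weyl--Riesz lemma for $\Delta U \geq 0$ to produce a subharmonic $L^1_{\mathrm{loc}}$ representative $u$, and only then mollify $u$ to get smooth psh functions decreasing to it. The paper skips this intermediate step and mollifies the distribution directly, setting $u_\eps(x) = U(\rho_{x,\eps})$; these are smooth and psh by the Hessian hypothesis, and the paper asserts they decrease to a psh limit representing $U$. Your route makes the potential-theoretic input explicit (and you rightly identify it as the one substantive step), whereas the paper's route is more streamlined but leaves implicit exactly the same content---the monotonicity of $u_\eps$ in $\eps$ and the fact that the limit is not identically $-\infty$ ultimately rest on the sub-mean-value property, i.e.\ on the subharmonic theory you invoke by name.
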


\begin{proof}
    Fix $\xi \in \mathbb{C}^n$ and consider the operator 
$$\Delta_\xi : u \mapsto \sum_{j, k} \xi_j \bar{\xi_k} \dfrac{\partial^2u}{\partial z_j \partial \bar{z}_k}$$
defined on all $\mathcal{C}^2$ functions on $\Omega$. Fix $a \in \Omega$ and $u$ a smooth plurisubharmonic function on $\Omega$. 
    Consider the function $u_\xi : z \mapsto u(a + z\xi)$ defined over a neighborhood of $0$ in the complex line. Where it is defined, $u_\xi$ is $\mathcal{C}^2$ and 
$$\dfrac{\partial^2u_\xi}{\partial z \partial \bar{z}}(z_0) = \Delta_\xi u(a + z_0\xi).$$
    We have thus brought the problem down to subharmonic functions on a domain in $\mathbb{C}$. It is clear from the integral definition of subharmonicity that $\dfrac{\partial^2u_\xi}{\partial z \partial \bar{z}} \geq 0$. 
    
    \medskip

    When $u$ is not smooth, we can regularize it by convolution. Because $\Delta_\xi$ is linear with constant coefficient, it commutes with convolution and so  $\sum_{j, k} \xi_j \bar{\xi_k} \dfrac{\partial^2u}{\partial z_j \partial \bar{z_k}}$ is a positive distribution. For more details on plurisubharmonic smoothing see \cite[Section 1.3.3.1, pp. 24-26]{GZbook} 

   Consider $\rho$ a smooth non-negative radial function on $\mathbb{C}^n$ with support in the unit ball and whose integral on $\mathbb{C}^n$ is $1$. We then define 
$$\rho_{x, \eps} : z \mapsto \dfrac{1}{\eps^{2n}}\rho\left(\dfrac{z - x}{\eps}\right).$$
   For $U$ a distribution on $\Omega$, we want to define $u$ by $u(x) = \lim_{\eps \rightarrow 0} U(\rho_{x, \eps})$. Assume the distribution $\sum_{j, k}\xi_j \bar{\xi_k} \dfrac{\partial^2U}{\partial z_j \bar{z_k}}$ is positive for every $\xi$, and for every $\eps > 0$ consider $u_\eps$ the function defined within $\Omega$ on points that are at a distance of at least $\eps$ of the boundary of $\Omega$, such that $u_\eps(x) = U(\rho_{x, \eps})$. The function $u_\eps$ is smooth and plurisubharmonic, and by $u_\eps$ decreases to a limit $u$ as $\eps$ decreases to $0$. Therefore $u$ satisfies the mean value inequalities and its associated distribution is $U$.
\end{proof}

We can then reformulate the description of psh functions:
\begin{prop}
    If $u\in \PSH(\Omega)$, then $dd^cu$ is a positive $(1, 1)$-current.
\end{prop}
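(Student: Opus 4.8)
\emph{Proof proposal.} The plan is to reduce to the smooth case by convolution and then pass to the limit, using that the cone of positive currents is closed under weak convergence. Concretely: first treat smooth $u$, where positivity of $dd^c u$ is a pointwise linear‑algebra statement that can be read off from the proof of Proposition \ref{gz1.43}; then apply part (4) of the Proposition on psh functions to approximate a general $u\in\PSH(\Omega)$ from above by smooth psh functions $u_\eps=u\star\rho_\eps$, and conclude by continuity of $dd^c$ on currents. (An alternative is the direct route: by Proposition \ref{gz1.43} the distribution $\Delta_\xi u=\sum_{j,k}\xi_j\bar\xi_k\,\partial^2 u/\partial z_j\partial\bar z_k$ is $\geq 0$ for every $\xi\in\mathbb C^n$, and one then invokes the equivalence, for $(1,1)$-currents with measure coefficients, between positivity and positivity of all these ``diagonal'' contractions. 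But this equivalence is exactly the delicate point, so I prefer the approximation route, which needs it only for smooth forms.)

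\emph{Step 1: the smooth case.} Let $u\in\PSH(\Omega)\cap C^2(\Omega)$. With the normalization $dd^c=2i\partial\bar\partial$ we have $dd^c u=2i\sum_{j,k}\frac{\partial^2 u}{\partial z_j\partial\bar z_k}\,dz_j\wedge d\bar z_k$, a smooth $(1,1)$-form whose coefficient matrix $H(x)=\bigl(\frac{\partial^2 u}{\partial z_j\partial\bar z_k}(x)\bigr)$ is Hermitian. Fix $x\in\Omega$ and $\xi\in\mathbb C^n$; exactly as in the proof of Proposition \ref{gz1.43}, the slice $z\mapsto u(x+z\xi)$ is subharmonic near $0\in\mathbb C$, so $\sum_{j,k}\xi_j\bar\xi_k\frac{\partial^2 u}{\partial z_j\partial\bar z_k}(x)\geq 0$, i.e. $H(x)\geq 0$. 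Hence the restriction of $dd^c u$ to every complex line is a nonnegative volume form, which by the characterization of positive forms recalled in the text means $dd^c u$ is a positive $(1,1)$-form at every point; in particular $\langle dd^c u,\beta\rangle=\int_\Omega dd^c u\wedge\beta\geq 0$ for every strongly positive test form $\beta\in\mathcal D^{n-1,n-1}(\Omega)$.

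\emph{Step 2: approximation.} Let now $u\in\PSH(\Omega)$ be arbitrary. Recall that psh functions are locally integrable, so $u\in L^1_{\mathrm{loc}}(\Omega)$ defines a distribution and the current $dd^c u$ (of bidegree $(1,1)$, since $dd^c$ raises bidegree by $(1,1)$) makes sense. By part (4) of the Proposition on psh functions, $u_\eps:=u\star\rho_\eps$ is smooth, psh on $\Omega_\eps:=\{x:d(x,\partial\Omega)>\eps\}$, and decreases to $u$ as $\eps\downarrow 0$; by monotone convergence $u_\eps\to u$ in $L^1_{\mathrm{loc}}(\Omega)$, hence in $\mathcal D'(\Omega)$. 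Since differentiation is continuous for the weak topology of currents, $dd^c u_\eps\to dd^c u$ weakly on $\Omega$. Fix a strongly positive test form $\beta$, supported in some $\Omega_{\eps_0}$; for $\eps<\eps_0$ Step 1 gives $\langle dd^c u_\eps,\beta\rangle\geq 0$, and letting $\eps\downarrow 0$ yields $\langle dd^c u,\beta\rangle\geq 0$. As $\beta$ was an arbitrary strongly positive test form, $dd^c u$ is a positive $(1,1)$-current.

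\emph{Main obstacle.} The only genuinely non‑formal ingredient is the equivalence used in Step 1 between pointwise positive semidefiniteness of the complex Hessian and the pairing characterization of positive $(1,1)$-forms against the strongly positive cone — i.e. the linear algebra identifying $dd^c u\wedge(i\alpha_1\wedge\bar\alpha_1)\wedge\cdots\wedge(i\alpha_{n-1}\wedge\bar\alpha_{n-1})$ with a nonnegative multiple of $\sum_{j,k}\xi_j\bar\xi_k\,\partial^2 u/\partial z_j\partial\bar z_k$, where $\xi$ spans the common kernel of $\alpha_1,\dots,\alpha_{n-1}$. For smooth forms this is elementary and already implicit in the text's definition of positivity via restrictions to subspaces; routing through approximation rather than arguing directly on $dd^c u$ is precisely what lets us avoid running this identity at the level of currents with merely measure coefficients, where ``restriction to a subspace'' is not literally meaningful.
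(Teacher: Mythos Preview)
Your proof is correct. The paper, however, does not give a separate proof: it simply presents the proposition as a reformulation of Proposition \ref{gz1.43}, i.e.\ it takes exactly the ``direct route'' you mention in passing. The point is that for a $(1,1)$-current $T=i\sum T_{j\bar k}\,dz_j\wedge d\bar z_k$ and a strongly positive test form $\beta=\chi\cdot i\alpha_1\wedge\bar\alpha_1\wedge\cdots\wedge i\alpha_{n-1}\wedge\bar\alpha_{n-1}$ with $\chi\in\mathcal D(\Omega)$, the pairing $\langle T,\beta\rangle$ is a positive multiple of $\sum_{j,k}\xi_j\bar\xi_k\,T_{j\bar k}(\chi)$, where $\xi$ spans $\bigcap\ker\alpha_\ell$; this is a purely algebraic identity on coefficients and needs no smoothness of $T$, so Proposition \ref{gz1.43} gives positivity immediately.

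Your concern that this identity is ``delicate'' at the level of currents is slightly overstated: since the $\alpha_j$ are smooth, wedging with them is well defined on currents and the computation is the same as in the smooth case, coefficient by coefficient. That said, your approximation argument is perfectly valid and has the pedagogical advantage of isolating the linear algebra in the smooth setting, where the restriction-to-lines characterization of positive $(1,1)$-forms is literally available. The cost is mild: one extra limit and the (easy) observation that the positive cone is weakly closed.
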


\subsection{Weak convergence of measures and currents}

Recall that a sequence of Radon measures $\mu_j$ converges weakly in $\Omega$ to a Radon measure $\mu$ if for all continuous function $\chi$ with compact support in $\Omega$ we have 
\[
\lim_{j\to +\infty}\int_{\Omega} \chi d\mu_j  = \int_{\Omega} \chi d\mu. 
\]
We will use several times the following elementary result. 
\begin{lemma}\label{lem: semicontinuity}
	Assume $(\mu_j)$ is a sequence of positive Radon measures converging weakly to a positive Radon measure $\mu$ in $\Omega$. If $(f_j)$ is a sequence of positive lower semicontinuous functions in $\Omega$ which increase to $f$, then 
	\[
	\liminf_{j\to +\infty} \int_{\Omega} f_j d\mu_j \geq \int_{\Omega} fd\mu. 
	\] 
	In particular, if $U\subset \Omega$ is open then 
	\[
	\liminf_{j\to +\infty} \int_{U} d\mu_j \geq \int_{U} d\mu. 
	\]
\end{lemma}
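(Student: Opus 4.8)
The plan is to reduce the general statement to the "in particular" claim by a standard truncation argument, and to prove the latter by approximating the indicator of an open set from below by continuous compactly supported functions. First I would observe that the second assertion follows from the first: take $f_j = \idd_U$ for all $j$, which is a positive lower semicontinuous function (since $U$ is open), and the constant sequence trivially increases to $f = \idd_U$; then $\int_U d\mu_j = \int_\Omega \idd_U\, d\mu_j$ and likewise for $\mu$, so the first inequality gives exactly $\liminf_j \int_U d\mu_j \geq \int_U d\mu$.

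For the main inequality, fix $k \in \bN$ and note that $f_k$ is a fixed positive lower semicontinuous function. The key point is that a positive lower semicontinuous function on $\Omega$ can be written as an increasing limit of continuous compactly supported functions $g_m \nearrow f_k$ with $0 \le g_m$; for instance one may use $g_m(z) = \min\big(m,\ \inf_{w}\, (f_k(w) + m|z-w|)\big)$ cut off by a compactly supported plateau function, or simply invoke the standard characterization of l.s.c.\ functions. For each such $g_m$, since $0 \le g_m \le f_j$ whenever $j \ge k$ (because $f_j$ increases in $j$ and $f_j \ge f_k \ge g_m$), we have $\int_\Omega g_m\, d\mu_j \le \int_\Omega f_j\, d\mu_j$ for $j \ge k$. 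Letting $j \to +\infty$ and using weak convergence of $\mu_j$ to $\mu$ (applicable since $g_m$ is continuous with compact support),
\[
\int_\Omega g_m\, d\mu = \lim_{j\to+\infty} \int_\Omega g_m\, d\mu_j \le \liminf_{j\to+\infty} \int_\Omega f_j\, d\mu_j.
\]
Now let $m \to +\infty$: by the monotone convergence theorem, $\int_\Omega g_m\, d\mu \nearrow \int_\Omega f_k\, d\mu$, so $\int_\Omega f_k\, d\mu \le \liminf_j \int_\Omega f_j\, d\mu_j$. Finally let $k \to +\infty$ and apply monotone convergence once more on the left (using $f_k \nearrow f$) to conclude $\int_\Omega f\, d\mu \le \liminf_j \int_\Omega f_j\, d\mu_j$.

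The step I expect to require the most care is the approximation of a lower semicontinuous $f_k$ from below by continuous compactly supported functions together with the bookkeeping of the two nested limits ($j$ then $m$, and $k$ then $m$). One must be slightly careful that the compact support truncation does not destroy the monotonicity $g_m \nearrow f_k$; this is handled by choosing an exhaustion of $\Omega$ by relatively compact open sets $\Omega_m \Subset \Omega_{m+1}$ with plateau cutoffs $\chi_m$ equal to $1$ on $\Omega_{m-1}$, so that $\chi_m \cdot \min(m, h_m) \nearrow f_k$ pointwise on $\Omega$, where $h_m$ is the Lipschitz lower approximation of $f_k$. Everything else is a routine application of weak convergence against a fixed continuous compactly supported test function and of the monotone convergence theorem.
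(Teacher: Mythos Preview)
Your proof is correct and follows essentially the same approach as the paper's: fix $k$, use $f_j\ge f_k$ for $j\ge k$, approximate the l.s.c.\ function $f_k$ from below by continuous functions to exploit weak convergence, and then pass to the limit in $k$ via monotone convergence. The paper presents this in two steps (first treating the constant sequence $f_j=f$, then reducing the general case to it), while you merge the two; you are also slightly more explicit about the compact support needed for the test functions.
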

The last statement follows from the previous one because the function ${\bf 1}_U$ is lower semicontinuous. 
\begin{proof}
	Assume first that $f_j=f$, and let $(g_j)$ be a sequence of positive continuous functions increasing to $f$. Then, for all $j,k$, 
	\[
	\liminf_{j\to +\infty}\int_{\Omega} f d\mu_j  \geq  \liminf_{j\to +\infty}\int_{\Omega} g_k d\mu_j =\int_{\Omega} g_kd\mu.
	\]
	Letting $k\to +\infty$, we obtain the inequality in this case. For the general case, since $f_j\geq f_k$ when $j\geq k$, we have, by the previous step, 
	\[
	\liminf_{j\to +\infty}\int_{\Omega} f_j d\mu_j \geq \liminf_{j\to +\infty}\int_{\Omega} f_k d\mu_j \geq \int_{\Omega}f_k d\mu.
	\]
	The conclusion follows by letting $k\to +\infty$ and using monotone convergence. 
\end{proof}

\begin{lemma}\label{lem: semi cont compact}
	Assume $(\mu_j)$ is a sequence of positive Radon measures converging weakly to a positive Radon measure $\mu$ in $\Omega$. If $K\Subset \Omega$ is compact then
	\[
	\limsup_{j\to +\infty} \int_K d\mu_j \leq \int_K d\mu. 
	\] 
\end{lemma}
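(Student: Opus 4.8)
The plan is to mirror the proof of Lemma~\ref{lem: semicontinuity}, but in the "dual" direction: there one approximates the lower semicontinuous function ${\bf 1}_U$ from below by continuous functions, here one approximates the upper semicontinuous function ${\bf 1}_K$ from above by continuous compactly supported functions. Since weak convergence of Radon measures only tests against $\chi\in C_c(\Omega)$, the whole point is to trap ${\bf 1}_K$ between $0$ and such a $\chi$ and then pass to the limit in the approximation.

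Concretely, I would first fix an auxiliary relatively compact open set $\Omega'$ with $K\Subset\Omega'\Subset\Omega$, and for each $k\in\bN$ use Urysohn's lemma (together with local compactness) to choose a continuous function $\chi_k:\Omega\to[0,1]$ with compact support contained in $\Omega'$, equal to $1$ on a neighborhood of $K$, with the sequence $(\chi_k)$ decreasing and converging pointwise to ${\bf 1}_K$; e.g. take $\chi_k$ equal to $1$ on $\{x:\mathrm{dist}(x,K)\le 1/(2k)\}$ and supported in $\Omega'\cap\{x:\mathrm{dist}(x,K)<1/k\}$. Since ${\bf 1}_K\le\chi_k\le 1$ and $\chi_k\in C_c(\Omega)$, weak convergence gives, for each fixed $k$,
\[
\limsup_{j\to+\infty}\int_K d\mu_j\le\limsup_{j\to+\infty}\int_\Omega\chi_k\,d\mu_j=\int_\Omega\chi_k\,d\mu.
\]
Then I would let $k\to+\infty$: the functions $\chi_k$ are dominated by $\chi_1$, which lies in $L^1(\mu)$ because its support is a compact subset of $\Omega$ and $\mu$ is Radon, and $\chi_k\downarrow{\bf 1}_K$ pointwise, so dominated convergence yields $\int_\Omega\chi_k\,d\mu\to\mu(K)$. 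Combining the two facts gives $\limsup_{j}\int_K d\mu_j\le\mu(K)$, which is the claim.

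There is no serious obstacle here; the only point needing a little care is the construction of the decreasing sequence of cutoffs converging pointwise exactly to ${\bf 1}_K$, which relies on $K$ being compact and $\Omega$ locally compact. If one prefers to avoid that construction, one can instead invoke outer regularity of the Radon measure $\mu$ in the form $\mu(K)=\inf\{\mu(U):U\ \text{open},\ K\subset U\Subset\Omega\}$: for each such $U$ pick a single $\chi\in C_c(\Omega)$ with ${\bf 1}_K\le\chi\le{\bf 1}_U$, deduce $\limsup_j\int_K d\mu_j\le\int_\Omega\chi\,d\mu\le\mu(U)$, and take the infimum over $U$.
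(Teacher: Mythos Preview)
Your proof is correct and follows essentially the same approach as the paper: fix a compactly supported cutoff $\chi$ with ${\bf 1}_K\le\chi$, use weak convergence to get $\limsup_j\int_K d\mu_j\le\int_\Omega\chi\,d\mu$, and then let $\chi\downarrow{\bf 1}_K$. The paper's version is terser (it takes $\chi$ smooth rather than invoking Urysohn, and simply says ``letting $\chi$ decrease to ${\bf 1}_K$'' without spelling out dominated convergence), but the argument is the same.
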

\begin{proof}
	Fix a smooth positive function $\chi$ with compact support in $\Omega$ such that $\chi=1$ on $K$. Then 
	\[
	\limsup_{j\to +\infty} \int_K d\mu_j \leq \limsup_{j\to +\infty}\int_{\Omega} \chi d\mu_j = \int_{\Omega} \chi d\mu. 
	\] 
	Letting $\chi$ decrease to $\idd_K$, we obtain the result. 
\end{proof}

\subsection{Monge-Amp\`ere operator}
Let $\Omega$ be an open set in $\mathbb C^n$. If $T$ is a closed positive $(p,p)$-current then it can be written as a differential form with measures coefficients, as in \eqref{eq current1}. If $u$ is bounded and psh then $uT$ is a current and we can take $dd^c(uT)$.

If $u$ is bounded, then one can define $uT$ which is the current with coefficients $uT_{I,J}$. When $u$ is smooth, $\langle uT, \eta\rangle:= \langle T, u\eta \rangle $ and $dd^c u \wedge T$ is a $(p+1, p+1)$-current in $\Omega$ defined as $\langle dd^c u \wedge T, \eta \rangle=\langle T, dd^c u \wedge \eta \rangle $. Observe that if $u$ is smooth and psh, by definition $dd^c u \wedge T$ is still a positive and closed current.

Now observe that if we set $\Theta:= d^c u \wedge \eta -u d^c \eta$, then 
\begin{eqnarray*}
d\Theta&=&dd^c u \wedge \eta -d^c u \wedge d\eta -du \wedge d^c \eta- udd^c \eta\\
&=& dd^c u \wedge \eta +2 \partial u \wedge \partial\eta -2 \bar{\partial}u \wedge \bar{\partial} \eta- udd^c \eta.
\end{eqnarray*}
Thus
\begin{eqnarray*}
\langle dd^c u \wedge T, \eta \rangle&=& \langle T, dd^c u \wedge \eta \rangle \\
&=& \langle T,  u  dd^c\eta \rangle + \langle T,  d\Theta \rangle + 2 \left(  \langle T, \bar{\partial}u \wedge \bar{\partial} \eta  \rangle -  \langle T, {\partial}u \wedge {\partial} \eta  \rangle  \right)\\
&=&  \langle uT,  dd^c\eta \rangle =  \langle dd^c(uT),  \eta \rangle,
\end{eqnarray*}
where the last line follows from the fact that $T$ is closed and that $\langle T, \bar{\partial}u \wedge \bar{\partial} \eta  \rangle =  \langle T, {\partial}u \wedge {\partial} \eta  \rangle=0 $ for degree reasons.

This motivates the following definition:
given $T$ a closed and positive $(p,p)$-current and $u\in \PSH(\Omega) \cap L^\infty (\Omega)$, then the current $dd^c u\wedge T$ is the $(p+1, p+1)$-current defined as 
$$dd^c u \wedge T= dd^c(uT).$$

\begin{prop}[Integration by parts]\label{prop: integration by parts}
	Let $T$ be a closed positive current of bidegree $(n-1,n-1)$. Assume $u, v\in \PSH(\Omega)$ are bounded, $u=0$ on $\partial \Omega$, $v\leq 0$, and $\int_{\Omega} dd^c v \wedge T <+\infty$. Then 
	\[
	\int_{\Omega} u dd^c v \wedge T \geq \int_{\Omega} v dd^c u \wedge T. 
	\]
	In particular, if $v=0$ on $\partial \Omega$ and $\int_{\Omega} dd^c u\wedge T<+\infty$, then the above inequality is an equality. 
\end{prop}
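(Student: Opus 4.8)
The plan is to reduce the statement to the smooth case by approximation, since for smooth $u,v$ the desired inequality is in fact an equality obtained by a direct Stokes-type computation like the one performed just above the definition of $dd^cu\wedge T$. First I would handle the smooth case: if $u$ and $v$ are smooth and $u=0$ on $\partial\Omega$, then $\int_\Omega u\,dd^cv\wedge T=\langle T, u\,dd^cv\rangle$, and writing $\Theta:=d^cv\wedge u\,\eta-\ldots$ — more precisely, expanding $d(u\,d^cv\wedge T)$ and $d(v\,d^cu\wedge T)$ using $dT=0$ and Stokes' theorem on the domain $\Omega$ — one finds $\int_\Omega u\,dd^cv\wedge T=\int_\Omega v\,dd^cu\wedge T$ with no torsion terms, because the boundary contributions vanish when $u=0$ on $\partial\Omega$ and the mixed first-order terms $\partial u\wedge\bar\partial v$ cancel against $\bar\partial u\wedge\partial v$ after symmetrizing; here $T$ being closed of bidegree $(n-1,n-1)$ is essential.

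Next I would pass to the general (bounded, possibly non-smooth) case. Using property (4) of the Proposition on plurisubharmonic functions, take smooth psh decreasing approximants; but since $u$ is required to vanish on $\partial\Omega$ one cannot approximate $u$ directly by global smoothing, so instead I would work on a slightly smaller subdomain $\Omega'\Subset\Omega$ (or, as in the remark following property (6), replace $u$ by $\max(u,A(|z|^2-1))$-type modifications so that $u$ still vanishes on the boundary of the relevant domain) and smooth there. Let $u_j\downarrow u$, $v_j\downarrow v$ be smooth psh, with $v_j\le 0$. For the smooth approximants the identity $\int u_j\,dd^cv_k\wedge T=\int v_k\,dd^cu_j\wedge T$ holds on $\Omega$ (after arranging the boundary values), and then I would let $j,k\to\infty$. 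On the left, $u_j\downarrow u$ and $dd^cv_k\wedge T\weak dd^cv\wedge T$ as measures (this monotone convergence of Monge–Ampère-type currents under decreasing sequences is standard and follows from the definition $dd^cv\wedge T=dd^c(vT)$ together with weak continuity of $dd^c$); on the right, since $v_k\le 0$ and $dd^cu_j\wedge T\ge 0$, I would apply Lemma \ref{lem: semicontinuity} to $-v_k$ (lower semicontinuous, increasing to $-v$ after also letting the approximation run the right way) to get the needed upper/lower semicontinuity, producing the inequality $\int_\Omega u\,dd^cv\wedge T\ge\int_\Omega v\,dd^cu\wedge T$ in the limit. The hypothesis $\int_\Omega dd^cv\wedge T<+\infty$ guarantees the left-hand side is finite so no $\infty-\infty$ ambiguity arises. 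For the final "in particular" clause, apply the inequality twice, with the roles of $u$ and $v$ exchanged (legitimate once $v=0$ on $\partial\Omega$ and $\int dd^cu\wedge T<\infty$), to get both $\int u\,dd^cv\wedge T\ge\int v\,dd^cu\wedge T$ and the reverse, hence equality.

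The main obstacle I expect is bookkeeping around the boundary: the whole point of the hypothesis $u=0$ on $\partial\Omega$ is to kill the boundary integrals in Stokes' formula, but smoothing by convolution destroys exact boundary values, so the delicate step is to choose the approximation scheme (shrinking domains, or max-type gluing as in the remark after property (6), controlling things with the finiteness assumption $\int_\Omega dd^cv\wedge T<+\infty$) so that the boundary terms genuinely go to zero in the limit rather than merely being small. A secondary subtlety is justifying that $dd^cv_k\wedge T\to dd^cv\wedge T$ weakly and that the relevant total masses stay bounded, which is where one invokes that $v$ is bounded and $T$ closed positive; once these are in place the limiting argument via Lemma \ref{lem: semicontinuity} and weak convergence is routine.
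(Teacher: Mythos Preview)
Your plan has the right architecture (approximate, integrate by parts, pass to the limit) and you correctly identify the boundary as the main obstacle, but there is a genuine missing idea.

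First, a concrete error in your ``smooth case'': with $u,v$ smooth, $u=0$ on $\partial\Omega$, and $T$ closed of bidegree $(n-1,n-1)$, one does \emph{not} get equality. The computation gives
\[
\int_\Omega u\,dd^c v\wedge T-\int_\Omega v\,dd^c u\wedge T=-\int_{\partial\Omega} v\,d^c u\wedge T,
\]
and only the term $u\,d^c v$ vanishes on $\partial\Omega$; the term $v\,d^c u$ does not. One can argue this boundary term has the right sign when everything is smooth up to the boundary, but that is an extra argument you have not supplied, and more importantly it makes no sense when $T$ is merely a closed positive current: you cannot restrict $T$ to $\partial\Omega$, so Stokes with boundary is unavailable.

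This is exactly what the paper's proof avoids, and the trick it uses is the idea you are missing. Rather than approximating $u$ itself by smooth psh functions (which, as you note, destroys the boundary condition), the paper sets $u_\varepsilon=\max(u,-\varepsilon)$ and works with the difference $u_\varepsilon-u$. Because $u=0$ on $\partial\Omega$, this difference is \emph{compactly supported} in $\Omega$, and so is its convolution $(u_\varepsilon-u)*\rho_\eta$. Now integration by parts is simply the distributional definition of $dd^c$ acting on currents against a genuine test function: no boundary, no restriction of $T$, no Stokes on a domain. One then smooths only $v$ (by $v_k\searrow v$, smooth psh, $v_k\le 0$), uses that $dd^c(u_\varepsilon*\rho_\eta)\ge 0$ to throw away the $u_\varepsilon$-contribution with the correct sign, and lets $\eta\to 0$, $k\to\infty$, $\varepsilon\to 0$ in that order. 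Your vague suggestions (shrink the domain, max-type gluing) are gestures toward this, but the specific mechanism---compact support of $u_\varepsilon-u$---is what makes the argument go through cleanly. The ``in particular'' clause is handled exactly as you say, by symmetry.
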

\begin{proof}
	Fix $\varepsilon>0$ and set $u_{\varepsilon}=\max(u,-\varepsilon)$. Then $u_{\varepsilon}-u$  vanishes outside $D$, where $D$ is a relatively compact subset of $\Omega$. The convolution $(u_{\varepsilon}-u)*\rho_{\eta}$ is compactly supported in $D$ for $\eta>0$ small enough. Let $(v_k)$ be a sequence of smooth psh functions defined in an open neighborhood of $\bar D$ which decreases to $v$. By definition of $dd^c v_k \wedge T$ and integration by parts we have 
	\begin{flalign*}
		\int_D (u_{\varepsilon}-u)*\rho_{\eta} \, dd^c v_k\wedge T &= \int_D (-v_k) dd^c \left((u-u_{\varepsilon})*\rho_{\eta} \right)\wedge T\\
		& \leq \int_D (-v_k) dd^c u*\rho_{\eta} \wedge T,
	\end{flalign*}
 where the last line follows from the fact that $dd^c (u_\varepsilon * \rho_\eta)$ is a positive current.
	Letting $\eta\to 0^+$, and noting that ${\bf 1}_{\bar D} (-v_k)$ is upper semicontinuous in $\Omega$, we obtain 
	\[
	\int_D (u_{\varepsilon}-u) dd^c v_k \wedge T \leq \int_{\bar D} (-v_k) dd^c u \wedge T, 
	\]
	which, since $-u_\varepsilon \leq \varepsilon$, gives 
	\[
	\int_D (-u) dd^c v_k \wedge T \leq  \int_{\bar D} (-v_k) dd^c u \wedge T + \varepsilon \int_{\bar D} dd^c v_k \wedge T. 
	\]
	Letting $k\to +\infty$, then $D\to \Omega$ and finally $\varepsilon\to 0^+$ we obtain the desired result.  
	\end{proof}
	
	\begin{prop}\label{prop: symm}(Symmetry of the Monge-Ampère operator)
		Assume $T$ is a closed positive current of bidegree $(n-2,n-2)$ and $u, v$ are bounded psh functions. 
  Then 
		\[
		dd^c u \wedge dd^c v \wedge T = dd^c v \wedge dd^c u \wedge T. 
		\]
	\end{prop}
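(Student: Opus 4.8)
The statement is local, so I would fix a small ball $\mathbb B\Subset\Omega$ and prove the identity of measures there; after subtracting constants we may assume $u,v\le-1$ on $\mathbb B$. The plan is to integrate by parts twice, reducing to the case in which one of the two factors is smooth.

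\emph{Case of a smooth factor.} If $\rho$ is smooth and $w$ is bounded psh, then $dd^c\rho\wedge dd^cw\wedge T=dd^cw\wedge dd^c\rho\wedge T$. I would check this by pairing the two $(n,n)$-currents with a smooth function $\eta$ of compact support and unwinding the definition $dd^c(\cdot)\wedge S=dd^c\bigl((\cdot)S\bigr)$: since $\rho$ is smooth, $\rho\,dd^c\eta$ is still a smooth compactly supported form, and since $dd^c\eta$ is $d$- and $d^c$-closed one has $dd^c(\rho\,dd^c\eta)=dd^c\rho\wedge dd^c\eta$; both pairings then collapse to $\langle T,\ w\,dd^c\rho\wedge dd^c\eta\rangle$, where the only nontrivial input is that a positive current has order zero and hence pairs with the bounded Borel form $w\,dd^c\rho\wedge dd^c\eta$.

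\emph{General case.} Using the gluing construction recalled above, I replace $u,v$ by $\hat u=\max(u,A(|z|^2-1))$ and $\hat v=\max(v,A(|z|^2-1))$ with $A$ large; these are bounded psh near $\bar{\mathbb B}$, are $\le 0$, vanish on $\partial\mathbb B$, coincide with $A(|z|^2-1)$ near $\partial\mathbb B$, and agree with $u,v$ on a smaller ball, where by locality it suffices to prove the identity. Recalling that $dd^c$ of a bounded psh function against a closed positive current is again a closed positive current, hence of locally finite mass, the behaviour near $\partial\mathbb B$ guarantees that every mixed Monge–Amp\`ere measure appearing below has finite mass on $\mathbb B$. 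Now fix a smooth function $\chi$ with compact support in $\mathbb B$ and write $dd^c\chi=dd^c\psi-dd^c\psi_0$ with $\psi=\chi+A(|z|^2-1)$ and $\psi_0=A(|z|^2-1)$, which for $A$ large are smooth psh, $\le0$ on $\mathbb B$, and vanish on $\partial\mathbb B$. Then, using $dd^c(\cdot)\wedge S=dd^c((\cdot)S)$ and moving $dd^c$ onto $\chi$,
\begin{align*}
\langle dd^c\hat u\wedge dd^c\hat v\wedge T,\chi\rangle &=\int_{\mathbb B}\hat u\ dd^c\chi\wedge dd^c\hat v\wedge T\\
&=\int_{\mathbb B}\hat u\ dd^c\psi\wedge dd^c\hat v\wedge T-\int_{\mathbb B}\hat u\ dd^c\psi_0\wedge dd^c\hat v\wedge T,
\end{align*}
and for each $\rho\in\{\psi,\psi_0\}$ the smooth-factor case lets me rewrite $dd^c\rho\wedge dd^c\hat v\wedge T=dd^c\hat v\wedge(dd^c\rho\wedge T)$, where $dd^c\rho\wedge T$ is a closed positive $(n-1,n-1)$-current. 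To each of these terms the integration-by-parts Proposition applies with no boundary term ($\hat u,\hat v$ both vanish on $\partial\mathbb B$, are $\le0$, and the relevant masses are finite), giving $\int_{\mathbb B}\hat u\,dd^c\hat v\wedge(dd^c\rho\wedge T)=\int_{\mathbb B}\hat v\,dd^c\hat u\wedge(dd^c\rho\wedge T)$. Re-running the displayed identities with $\hat u$ and $\hat v$ exchanged yields $\langle dd^c\hat u\wedge dd^c\hat v\wedge T,\chi\rangle=\langle dd^c\hat v\wedge dd^c\hat u\wedge T,\chi\rangle$ for all $\chi$, hence equality of the two measures on $\mathbb B$; covering $\Omega$ by such balls concludes.

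\emph{Main obstacle.} The crux is the familiar one in this setting: controlling the boundary terms produced by integration by parts. The gluing in the general case is designed precisely to force the competing functions to vanish on $\partial\mathbb B$, so that the integration-by-parts Proposition gives an equality rather than an inequality; one also has to verify, routinely but carefully, that each iterated Monge–Amp\`ere current in sight is genuinely a closed positive current of finite mass on $\mathbb B$, so that that Proposition is applicable. (One could alternatively argue by approximating $u,v$ from above by smooth psh functions, but that invokes the continuity of the Monge–Amp\`ere operator along decreasing sequences, which has not yet been established at this point, whereas the argument above uses only what precedes the statement.)
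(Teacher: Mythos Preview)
Your proof is correct and follows essentially the same strategy as the paper's: localize to a ball, modify $u,v$ so that they vanish on the boundary, decompose the test function as a difference of two smooth psh functions vanishing on $\partial\mathbb B$, and combine the integration-by-parts Proposition with the trivial symmetry when one factor is smooth. The only organizational difference is that the paper tests directly against a smooth psh $\psi$ vanishing on $\partial\Omega$ and applies Proposition~\ref{prop: integration by parts} three times (swap $\psi\leftrightarrow u$, then $u\leftrightarrow v$, then $v\leftrightarrow\psi$), whereas you first pair with a compactly supported $\chi$ (so the outer integration by parts is the trivial distributional one) and then need only one application of Proposition~\ref{prop: integration by parts} per summand; both routes rest on the same two ingredients.
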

	\begin{proof}
		Since the problem is local, we can assume w.l.o.g. that $\Omega$ is the unit ball and that $u=v=0$ on the boundary of $\Omega$. Let $\psi$ be a smooth psh function in $\Omega$ that vanishes on $\partial \Omega$. We claim that 
		\[
		\int_{\Omega} \psi dd^c u \wedge dd^c v \wedge T =  \int_{\Omega} \psi dd^c v \wedge
		dd^c u  \wedge T. 
		\]	
	Indeed, integrating by parts using Proposition \ref{prop: integration by parts} gives
		\[
		\int_{\Omega} \psi dd^c u \wedge dd^c v \wedge T =  \int_{\Omega} u dd^c \psi \wedge
		dd^c v  \wedge T. 
		\]	
		Since $\psi$ is smooth, we also have $dd^c \psi  \wedge
		dd^c v \wedge T=
		dd^c v \wedge dd^c \psi  \wedge T$, yielding 
		\[
		 \int_{\Omega} u dd^c \psi \wedge
		dd^c v \wedge T = \int_{\Omega} u 
		dd^c v \wedge dd^c \psi \wedge T.
		\]	
		Integrating by parts again (Proposition \ref{prop: integration by parts}) and repeating the previous argument (where the role of $u$ and $v$ are reversed) we obtain 
		\[
		\int_{\Omega} u 
		dd^c v \wedge dd^c \psi \wedge T =\int_{\Omega} v
		dd^c u  \wedge dd^c \psi \wedge T=\int_{\Omega} \psi dd^c v \wedge dd^c u \wedge T, 
		\]
		proving the claim. 
		
		Now, given $\chi$ a smooth test function (with compact support) in $\Omega$, we decompose $\chi=\psi_1-\psi_2$, where $\psi_1=A\rho + \chi$ and $\psi_2=A\rho$, where $\rho$ is the defining function of $\Omega$. Thus $\psi_1, \psi_2$ are both smooth, psh in $\Omega$, and vanish on $\partial\Omega$. By the claim we have  
		\[
		\int_{\Omega} \chi dd^c u \wedge dd^c v \wedge T =  \int_{\Omega} \chi dd^c v \wedge
		dd^c u  \wedge T,
		\]	
		finishing the proof. 
		\end{proof}

Using integration by parts and symmetry of the Monge-Amp\`ere operator, we see that $dd^c (uT)=dd^c u\wedge T$ is still a positive current:

\begin{prop}\label{prop:cont MA}
 Let $u_j$ be smooth psh functions in $\Omega$ decreasing to $u\in \PSH(\Omega) \cap L^\infty (\Omega)$. Then
 $$dd^c u_j \wedge T \quad {\rm weakly \,converges\, to} \quad dd^c u \wedge T.$$
 In particular $dd^c u\wedge T $ is a positive and closed current.
\end{prop}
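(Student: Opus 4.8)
The plan is to establish weak convergence by testing against an arbitrary smooth test form $\eta \in \mathcal{D}^{n-1,n-1}(\Omega)$ and reducing everything to the already-proven definition $dd^c u_j \wedge T = dd^c(u_j T)$ together with a dominated/monotone convergence argument at the level of the coefficient measures of $T$. Concretely, since the statement is local and both sides are $(1,1)$ applied to $T$, I would fix $\eta$, write $\langle dd^c u_j \wedge T, \eta\rangle = \langle dd^c(u_j T), \eta\rangle = \langle u_j T, dd^c\eta\rangle = \sum_{I,J}\int_\Omega u_j\, T_{I,J}\, (dd^c\eta)_{I,J}$ using the expression \eqref{eq current1} for $T$ as a form with complex-measure coefficients. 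The functions $u_j$ are uniformly bounded (they decrease to the bounded function $u$, and being a decreasing sequence they are bounded above by $u_1$ and below by $u$) and converge pointwise to $u$, so by dominated convergence with respect to each finite measure $T_{I,J}$ we get $\langle u_j T, dd^c\eta\rangle \to \langle uT, dd^c\eta\rangle = \langle dd^c u \wedge T, \eta\rangle$. This proves the asserted weak convergence.

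For the ``in particular'' statement, I would argue as follows. Each $dd^c u_j \wedge T$ is a positive current: $u_j$ is smooth and psh, so by the remark right before Proposition~\ref{prop: integration by parts} (or directly, since for smooth psh $u_j$ the form $dd^c u_j$ is a smooth positive $(1,1)$-form and the wedge of a smooth positive form with a closed positive current is positive), $dd^c u_j \wedge T$ is positive and closed. Positivity passes to the weak limit: if $\beta$ is a strongly positive test form then $\langle dd^c u_j \wedge T, \beta\rangle \ge 0$ for all $j$, hence $\langle dd^c u \wedge T, \beta\rangle = \lim_j \langle dd^c u_j \wedge T, \beta\rangle \ge 0$. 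Closedness also passes to the limit: for any test form $\eta$ of appropriate degree, $\langle d(dd^c u \wedge T), \eta\rangle = \pm\langle dd^c u \wedge T, d\eta\rangle = \pm\lim_j \langle dd^c u_j \wedge T, d\eta\rangle = \pm\lim_j \langle d(dd^c u_j \wedge T), \eta\rangle = 0$.

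The one point that requires a little care — and which I would flag as the main (mild) obstacle — is the very first reduction: one must be sure that the definition $dd^c(u_j T)$ for smooth $u_j$ genuinely agrees with the expression $\sum_{I,J}\int u_j T_{I,J} (dd^c\eta)_{I,J}$, i.e. that differentiating the current $u_j T$ in the distributional sense and then pairing with $\eta$ is the same as pairing $u_j T$ with $dd^c\eta$ up to sign, and that this sign is consistent with the computation $\langle dd^c u \wedge T, \eta\rangle = \langle uT, dd^c\eta\rangle$ carried out in the text just above the definition. This is precisely the content of that computation (valid for smooth $u$ there, and tautological here since $u_j$ is smooth), so no genuine difficulty arises; the only thing to double-check is that the integration-by-parts identity $\langle dd^c(u_jT),\eta\rangle = \langle u_j T, dd^c\eta\rangle$ holds with no boundary contributions because $\eta$ has compact support, which is immediate. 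Everything else is the standard ``dominated convergence for a fixed finite measure'' plus ``positivity and closedness are weakly closed conditions'' routine.
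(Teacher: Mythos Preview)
Your proof is correct and follows essentially the same route as the paper: both reduce to showing $u_jT \to uT$ weakly (you via dominated convergence against the coefficient measures $T_{I,J}$, the paper via the terser remark that $u_j \searrow u$ implies this), then apply $dd^c$ and note that positivity survives in the weak limit. The only cosmetic difference is that the paper obtains closedness in one line directly from the definition $dd^c u \wedge T = dd^c(uT)$, whereas you deduce it by passing closedness through the limit; both are fine.
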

\begin{proof}
By definition $dd^c u \wedge T=dd^c(uT)$ hence it is closed. Moreover, since $u_j \searrow u$ (hence $u_j$ converges to $u$ in $L^1(\Omega)$), we know that $u_j T $ converges in the weak sense of currents to $u T $. Hence $dd^c (u_j T)$ converges to $dd^c (uT)$ in the weak sense of currents. The positivity property is then preserved when passing to the limit. 
\end{proof}

The Monge-Amp\`ere operator can thus be defined inductively. More precisely, for $u_i\in \PSH(\Omega) \cap L^\infty (\Omega)$ we have
\begin{equation}\label{MA def}
dd^c u_1 \wedge ... \wedge dd^c u_p = dd^c (u_1 dd^c (u_2\cdots (u_{p-1}dd^c u_p))).  
\end{equation}

The $(p,p)$-current above is well defined thanks to Proposition \ref{prop: symm} below, is closed (by definition) and positive thanks to the following result:

	\begin{theorem}[Continuity along decreasing sequences]\label{thm: BT convergence}
			Assume $(u_k^j)$,  $j\in \mathbb N$, $k=0,...,p\leq n$, are decreasing sequences of uniformly bounded psh functions which converge to $u_k$. Let $T$ be a closed positive $(q,q)$-current in $\Omega$, with $p+q\leq n$. Then the sequence $u_0^j dd^c u_1^j \wedge ... \wedge dd^c u_p^j \wedge T$ weakly converges to $u_0 dd^c u_1\wedge ... \wedge dd^c u_p \wedge T$. 
		\end{theorem}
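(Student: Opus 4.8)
The plan is to prove the statement by induction on $p$, after a sequence of local reductions that make Propositions~\ref{prop: integration by parts} and~\ref{prop: symm} applicable. Since weak convergence of currents is local, I would fix a ball $\mathbb B\Subset\Omega$ and establish the convergence on a slightly smaller ball. Subtracting a large constant from each $u_k^j$ (this changes the currents only by harmless multiples of $\Theta_j$, already controlled by the induction) I may assume $-M\le u_k^j<0$ near $\overline{\mathbb B}$; replacing $u_k^j$ by $\max\bigl(u_k^j,A(|z|^2-1)\bigr)$ for a fixed large $A$, as in the discussion following the Proposition above, I may further assume each $u_k^j$ equals the fixed smooth function $A(|z|^2-1)$ on a fixed collar $\mathbb B\setminus\mathbb B'$; consequently all the currents $\Theta_j:=dd^c u_1^j\wedge\cdots\wedge dd^c u_p^j\wedge T$ agree on that collar with $\Theta:=dd^c u_1\wedge\cdots\wedge dd^c u_p\wedge T$, so no mass can escape through $\partial\mathbb B$. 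Finally, replacing $T$ by its wedge product with a fixed constant-coefficient strongly positive $(n-p-q,n-p-q)$-form (and testing only against forms $\chi\gamma$ with $\gamma$ such a constant form), I may assume $p+q=n$, so that $\Theta_j$ and $\Theta$ are positive measures.

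\emph{Base case and the bidegree shift.} For $p=0$ the claim $u_0^jT\to u_0T$ follows from dominated convergence applied to each (locally finite) measure coefficient of $T$, since $u_0^j\searrow u_0$ pointwise and $|u_0^j|\le M$. For the inductive step I first note $\Theta_j\to\Theta$ weakly: writing $\Theta_j=dd^c(u_1^jR_j)$ with $R_j=dd^c u_2^j\wedge\cdots\wedge dd^c u_p^j\wedge T$, the induction hypothesis (one function factor $u_1^j$ and $p-1$ operators) gives $u_1^jR_j\to u_1R$, and applying the weakly continuous operator $dd^c$ yields $\Theta_j\to\Theta$; in particular $\Theta$ is positive and closed. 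It then remains to prove $u_0^j\Theta_j\to u_0\Theta$. The upper estimate is routine: for $\chi\ge0$ and $j\ge\ell$ one has $\langle u_0^j\Theta_j,\chi\rangle\le\langle u_0^\ell\Theta_j,\chi\rangle$ by positivity and monotonicity, and the upper semicontinuity of the fixed function $u_0^\ell$ against the weakly convergent positive measures $\Theta_j$ (which follows from Lemmas~\ref{lem: semicontinuity} and~\ref{lem: semi cont compact} by approximating $u_0^\ell$ from above by continuous functions) gives $\limsup_j\langle u_0^\ell\Theta_j,\chi\rangle\le\langle u_0^\ell\Theta,\chi\rangle$; letting $\ell\to\infty$ yields $\limsup_j\langle u_0^j\Theta_j,\chi\rangle\le\langle u_0\Theta,\chi\rangle$.

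\emph{The main obstacle} is the matching lower bound $\liminf_j\langle u_0^j\Theta_j,\chi\rangle\ge\langle u_0\Theta,\chi\rangle$, which fails for general upper semicontinuous functions against weakly convergent positive measures, so the structure must be exploited. The plan is to prove instead convergence of total masses $\int_{\mathbb B}u_0^j\,\Theta_j\to\int_{\mathbb B}u_0\,\Theta$ and then conclude by a ``no loss of mass'' argument: the upper estimate above, the equality of total masses, and the collar reduction together force the positive measures $(M+u_0^j)\Theta_j$ to converge weakly, hence so does $u_0^j\Theta_j$. For the total masses, set $\sigma_j=dd^c u_2^j\wedge\cdots\wedge dd^c u_p^j\wedge T$ and $\sigma=dd^c u_2\wedge\cdots\wedge dd^c u_p\wedge T$. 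Using that all functions vanish on $\partial\mathbb B$, the integration by parts of Proposition~\ref{prop: integration by parts}, and the inequalities $u_0^j\ge u_0$, $u_1^j\ge u_1$ together with positivity of the relevant closed positive currents,
\[
\int_{\mathbb B}u_0^j\,dd^c u_1^j\wedge\sigma_j\ \ge\ \int_{\mathbb B}u_0\,dd^c u_1^j\wedge\sigma_j\ =\ \int_{\mathbb B}u_1^j\,dd^c u_0\wedge\sigma_j\ \ge\ \int_{\mathbb B}u_1\,dd^c u_0\wedge\sigma_j\ =\ \int_{\mathbb B}u_0\,dd^c u_1\wedge\sigma_j .
\]

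By the symmetry of the Monge--Amp\`ere operator (Proposition~\ref{prop: symm}) the last integrand equals $u_0\,dd^c u_2^j\wedge\cdots\wedge dd^c u_p^j\wedge\widetilde{T}$, where $\widetilde{T}:=dd^c u_1\wedge T=dd^c(u_1T)$ is closed and positive; this expression now involves only $p-1$ varying operators (and the fixed bounded psh function $u_0$ as factor), so the induction hypothesis applies and the last integral converges to $\int_{\mathbb B}u_0\,dd^c u_1\wedge\sigma=\int_{\mathbb B}u_0\,\Theta$. Hence $\liminf_j\int_{\mathbb B}u_0^j\,\Theta_j\ge\int_{\mathbb B}u_0\,\Theta$. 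For the reverse inequality I run the symmetric chain $\int_{\mathbb B}u_0^j\,dd^c u_1^j\wedge\sigma_j=\int_{\mathbb B}u_1^j\,dd^c u_0^j\wedge\sigma_j\le\int_{\mathbb B}u_1^\ell\,dd^c u_0^j\wedge\sigma_j$, note that $dd^c u_0^j\wedge\sigma_j=dd^c(u_0^j\sigma_j)\to dd^c(u_0\sigma)$ by the induction hypothesis, apply upper semicontinuity of the fixed $u_1^\ell$, let $\ell\to\infty$, and integrate by parts once more to reach $\int_{\mathbb B}u_0\,\Theta$. Combining the two inequalities gives convergence of the total masses, and the ``no loss of mass'' argument then completes the induction. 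The delicate point throughout—and the only genuinely subtle one—is this lower bound; the circularity one seems to run into is broken precisely because, after the integrations by parts, the operator $dd^c u_1^j$ is replaced by the \emph{fixed} operator $dd^c u_1$, which can be absorbed into the closed positive current $\widetilde T$, lowering the inductive level.
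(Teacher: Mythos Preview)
Your proof is correct and follows essentially the same route as the paper's: induction on $p$, localization to a ball with all functions modified to coincide with $A(|z|^2-1)$ near the boundary, and the key chain of integrations by parts (Proposition~\ref{prop: integration by parts}) exploiting the monotonicity $u_k^j\ge u_k$ to obtain the crucial lower bound on total masses. The paper organizes the endgame slightly differently---it invokes the Chern--Levine--Nirenberg inequality for relative compactness and then shows that for any cluster point $\Theta$ of $u_0^jS^j$ the positive current $u_0S-\Theta$ has zero mass, whereas you prove convergence of total masses directly and run a ``no loss of mass'' argument---and the paper carries out the full chain of $p{+}1$ swaps explicitly while you perform two and then invoke the induction hypothesis on the current $\widetilde T=dd^c u_1\wedge T$; but these are stylistic rather than substantive differences.
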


 
\begin{proof}
We proceed by induction on $p$. For $p=0$  we know already that $u_0^jT$ weakly converges to $u_0 T$. Fix $1 \leq p \leq n -q$ and assume that
the theorem is true for $p-1$, i.e. $$u_1^j dd^c u_2^j \wedge ... \wedge dd^c u_p^j \wedge T \rightharpoonup u_1 dd^c u_2\wedge ... \wedge dd^c u_p \wedge T. $$ As consequence
$$ S^j:=dd^c u_1^j \wedge ... \wedge dd^c u_p^j \wedge T \rightharpoonup  S:=dd^c u_1\wedge ... \wedge dd^c u_p \wedge T. $$
It follows from the Chern–Levine–Nirenberg inequality (see for e.g. \cite[Theorem 3.9.]{GZbook}) that the sequence $u_0^j S^j$ is relatively compact for the weak topology of currents. It then suffices to show that if the sequence  $u_0^j S^j$ converges weakly to a
current $\Theta$, then $\Theta=u_0 S$.
By upper semi-continuity we know that for all positive forms $\Gamma$ (of the correct degree), $\Theta\wedge \Gamma \leq u_0S \wedge \Gamma$. In particular $u_0S-\Theta$ is a positive current. It thus remains to prove that its mass is zero, or equivalently that on every small ball $\mathbb{B}:=\mathbb{B}(a, R) \subset \Omega$ the following holds:
\begin{equation}\label{zero mass}
 \int_{\mathbb B} u_0S \wedge \beta^{n-(p+q)} \leq   \int_{\mathbb B} \Theta \wedge \beta^{n-(p+q)}, 
 \end{equation}
where $\beta=dd^c |z-a|^2.$
Up to rescaling the ball, we can also assume that $u_k^j$, and $u_k$, for all $j$ and all $k=1,\cdots, p$, coincide with the function $A(|z-a|^2-R^2)$ in a neighborhood of $\partial \mathbb{B}$, $A>>1$; in particular $u_k^j=0$ on $\partial \mathbb{B}$. Then $u_j^k$ are all negative in the ball by maximum principle. Integrating by parts (Proposition \ref{prop: integration by parts}) and using that $u_k^j \geq u_k$, we get
\begin{eqnarray*}
  \int_{\mathbb{B}} u_0 S \wedge \beta^{n-p-q} &=&  \int_{\mathbb{B}} u_0 \bigwedge_{1\leq k\leq p} dd^c u_k\wedge T\wedge \beta^{n-p-q}\\
&\leq & \int_{\mathbb{B}} u_0^j \bigwedge_{1\leq k\leq p} dd^c u_k\wedge T\wedge \beta^{n-p-q}\\
&= & \int_{\mathbb{B}} u_1\,dd^c u_0^j \bigwedge_{2\leq k\leq p} dd^c u_k\wedge T\wedge \beta^{n-p-q}\\
&\leq & \int_{\mathbb{B}} u_1^j\,dd^c u_0^j \bigwedge_{2\leq k\leq p} dd^c u_k\wedge T\wedge \beta^{n-p-q}\\
&\leq & \int_{\mathbb{B}}  u_0^j \bigwedge_{1\leq k\leq p} dd^c u^j_k\wedge T\wedge \beta^{n-p-q}\\ 
&\leq & \int_{\mathbb{B}}  u_0^j S^j\wedge T\wedge \beta^{n-p-q}.
\end{eqnarray*}
Observe that the third inequality follows by applying Proposition \ref{prop: symm} and Proposition \ref{prop: integration by parts} $(p-1)$ times.
Also, since the positive measures $(-u_0^j) S^j \wedge \beta^{n-p-q}$ converge weakly to $-\Theta\wedge  \beta^{n-p-q}$ we have that 
$$\liminf_{j\rightarrow +\infty} \int_{\mathbb{B}} (-u_0^j) S^j \wedge \beta^{n-p-q} \geq  \int_{\mathbb{B}} -\Theta  \wedge \beta^{n-p-q}.$$
Combining the above we get \eqref{zero mass}, that is what we wanted.
\end{proof}

\begin{theorem}[Plurifine locality]
	Assume $\alpha$ is a smooth $(q,q)$-form in $\Omega$, $T$ is a closed positive  $(p,p)$-current  and $u,v$ are a bounded psh functions in $\Omega$. Then, for $r= n-p-q$, 
	\[
	{\bf 1}_{\{u >v\}} (dd^c \max(u,v))^r \wedge T \wedge \alpha = {\bf 1}_{\{u >v\}} (dd^c u )^r \wedge T \wedge \alpha.
	\]
\end{theorem}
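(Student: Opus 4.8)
The plan is to reduce, in several steps, to a very concrete statement and then run a decreasing approximation. Throughout I would work in a ball (the assertion is local) and, using linearity of both sides in $\alpha$ together with the fact that every smooth $(q,q)$-form is locally a finite combination of strongly positive ones, assume $\alpha$ strongly positive, so that every current below wedged with $\alpha$ is a positive measure. The first tool is \emph{locality on open sets}: if $\psi_1,\psi_2\in\PSH(\Omega)\cap L^\infty(\Omega)$ agree on an open set $G$, then $\mathbf{1}_G(dd^c\psi_1)^r\wedge T\wedge\alpha=\mathbf{1}_G(dd^c\psi_2)^r\wedge T\wedge\alpha$; this I would prove by induction on $r$ from the inductive definition \eqref{MA def}, since $dd^c$ is a local operator and on $G$ the currents $\psi_i(dd^c\psi_i)^{r-1}\wedge T$ coincide by the induction hypothesis. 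In particular $(dd^c\psi)^r\wedge T\wedge\alpha$ vanishes on $\{\psi<c\}$ for every constant $c$ and $r\ge 1$.

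Next I would reduce to the case of a constant. Put $w=\max(u,v)$ and $E_c=\{u>c\}\cap\{v<c\}$. Since for fixed $x$ the levels $c$ with $x\in E_c$ fill the open interval $(v(x),u(x))$, one has $\{u>v\}=\bigcup_c E_c$ and a countable subfamily of the $E_c$ already covers $\{u>v\}$; choosing those $c$ to avoid the (countably many) levels charged by the relevant measures and using inclusion–exclusion, it suffices to prove $\mathbf{1}_{E_c}(dd^cw)^r\wedge T\wedge\alpha=\mathbf{1}_{E_c}(dd^cu)^r\wedge T\wedge\alpha$ for each such $c$. On the open set $\{v<c\}$ one has $\max(u,v,c)=\max(u,c)$, and $E_c=\{w>c\}\cap\{v<c\}$; so, granting the ``constant case''
$$\mathbf{1}_{\{g>c\}}(dd^cg)^r\wedge T\wedge\alpha=\mathbf{1}_{\{g>c\}}(dd^c\max(g,c))^r\wedge T\wedge\alpha\qquad(\star)$$
for $g=u$ and for $g=w$, chaining with locality on $\{v<c\}$ gives the identity on $E_c$. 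The whole theorem is thus reduced to $(\star)$.

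For $(\star)$ I would take smooth psh $g_j\searrow g$, set $h_j=\max(g_j,c)\searrow h=\max(g,c)$, and note that on the open set $\{g_j>c\}$ one has $h_j=g_j$, so by locality on open sets the signed measures $\sigma_j:=(dd^ch_j)^r\wedge T\wedge\alpha-(dd^cg_j)^r\wedge T\wedge\alpha$ are carried by the closed sets $\{g_j\le c\}$. By Theorem~\ref{thm: BT convergence}, $\sigma_j\rightharpoonup\sigma:=(dd^ch)^r\wedge T\wedge\alpha-(dd^cg)^r\wedge T\wedge\alpha$, with uniformly bounded masses on compacts by the Chern-Levine-Nirenberg inequality. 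A Dini-type remark ($g_j\searrow g$ with $g$ upper semicontinuous forces $g_j<c$ uniformly on compacts of $\{g<c\}$ once $j$ is large) shows that $(dd^ch)^r\wedge T\wedge\alpha$ is carried by $\{g\ge c\}$; combined with locality on $\mathrm{int}\{g\ge c\}$ (where $h=g$) this yields $(\star)$ away from the ``fine boundary'' $B:=\{g>c\}\setminus\mathrm{int}\{g\ge c\}$.

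I expect $B$ to be the crux: it is genuinely nonempty for general bounded psh $g$, because $\{g>c\}$ is only \emph{plurifine} open, not open. To finish, one must observe that $B\subset\{g>c\}\cap\overline{\{g<c\}}$ is pluripolar — psh functions being finely continuous, $\{g>c\}$ can only meet the (pluripolar) non-fine part of $\overline{\{g<c\}}$ — and then invoke the Bedford-Taylor fact that the Monge-Amp\`ere measures of \emph{bounded} psh functions charge no pluripolar set (equivalently, quasi-continuity of psh functions), which is the one ingredient not already contained in the preliminaries above. This forces $\mathbf{1}_B\sigma=0$, completing $(\star)$ and hence the theorem. Everything else — reducing to open sets, to a constant, and the decreasing-sequence limit — is routine once the continuity theorem \ref{thm: BT convergence} is in hand.
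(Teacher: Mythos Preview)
Your architecture is considerably more elaborate than the paper's, and it has a genuine gap at the step you yourself flag as ``the crux.'' You correctly observe that $B=\{g>c\}\setminus\mathrm{int}\{g\ge c\}$ is contained in $\overline{\{g<c\}}^{\mathrm{Eucl}}\setminus\overline{\{g<c\}}^{\mathrm{plurifine}}$, but the parenthetical claim that this ``non-fine part'' is \emph{pluripolar} is precisely what you have not argued. For an arbitrary open set $U$, the set of points of $\partial U$ at which $U$ is pluri-thin need not be pluripolar; a Kellogg-type statement of this sort is not standard in the pluripotential setting and certainly not available from the preliminaries here. Your weak-convergence argument only shows that $\sigma$ vanishes on $\mathrm{int}\{g\ge c\}$ (the largest open set disjoint from all $\{g_j\le c\}$), which is exactly what locality already gave; it does not touch $B$. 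So as written, $(\star)$ --- and hence the theorem --- is not established.

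By contrast, the paper avoids all of this with a short trick that never isolates a boundary set. Approximate only $u$ by smooth psh $u_j\searrow u$ and set $f_j=\max(u_j,v)-v$, $f=\max(u,v)-v$. Since $\{u_j>v\}$ is open, locality gives
\[
f_j\,(dd^c\max(u_j,v))^r\wedge T\wedge\alpha \;=\; f_j\,(dd^cu_j)^r\wedge T\wedge\alpha,
\]
and Bedford--Taylor convergence (applied by linearity to the two psh summands $\max(u_j,v)$ and $v$ composing $f_j$) passes this to the limit with $f$ in place of $f_j$. Multiplying by $\mathbf{1}_{\{u>v\}}$ and then by $f/(f+\varepsilon)\nearrow\mathbf{1}_{\{f>0\}}=\mathbf{1}_{\{u>v\}}$ yields the statement directly. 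The point is that the weight $f$, being a difference of bounded psh functions, is exactly the kind of factor the convergence theorem handles, and it vanishes \emph{identically} on the complement --- so no fine-topology or pluripolar-set technology is needed at all.
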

\begin{proof}
	If $u$ is continuous then the set $\{u>v\}$ is open in $\Omega$ hence the result is trivial. Let $u_j$ be a sequence of smooth psh functions decreasing to $u$ and set $f_j= \max(u_j,v)-v$, $f=\max(u,v)-v$. Since $f_j=0$ if $u_j \leq v$, by the above observation we have that
	\[
	f_j (dd^c \max(u_j,v))^r \wedge T \wedge \alpha =f_j (dd^c u_j)^r \wedge T \wedge \alpha.
	\]
	By Bedford-Taylor's convergence theorem (Theorem \ref{thm: BT convergence}), letting $j\to +\infty$ we obtain 
	\[
	f (dd^c \max(u,v))^r \wedge T \wedge \alpha =f(dd^c u)^r \wedge T \wedge \alpha,
	\]
 and in particular
 $$ 	{\bf 1}_{\{u >v\}} f (dd^c \max(u,v))^r \wedge T \wedge \alpha = 	{\bf 1}_{\{u >v\}}f(dd^c u)^r \wedge T \wedge \alpha.$$
	For each $\varepsilon>0$, we then have 
	\[
	\frac{f}{f+\varepsilon}	{\bf 1}_{\{u >v\}} (dd^c \max(u,v))^r \wedge T \wedge \alpha =\frac{f}{f+\varepsilon}	{\bf 1}_{\{u >v\}}(dd^c u)^r \wedge T \wedge \alpha.
	\]
	Letting $\varepsilon\to 0$, we arrive at the result. 
\end{proof}

\begin{corollary}[Maximum principle]\label{cor: max principle}
	Assume $u,v$ are bounded psh functions in $\Omega$. Then 
	\[
	(dd^c \max(u,v))^n \geq {\bf 1}_{\{u\geq v\}} (dd^c u)^n +  {\bf 1}_{\{u< v\}} (dd^c v)^n. 
	\]
 In particular, if $u\leq v$ then ${\bf 1}_{\{u= v\}} (dd^c u)^n \leq {\bf 1}_{\{u= v\}} (dd^c v)^n$. 
\end{corollary}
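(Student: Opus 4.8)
The plan is to derive the inequality from the Plurifine locality theorem together with Bedford--Taylor's convergence theorem (Theorem~\ref{thm: BT convergence}); the one delicate point is the behaviour on the coincidence set $\{u=v\}$, which plurifine locality does not control directly.

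\emph{Step 1: a one-sided bound.} I would first establish that for bounded psh $u,v$,
\[
(dd^c\max(u,v))^n\geq{\bf 1}_{\{u\geq v\}}(dd^c u)^n .
\]
Plurifine locality only permits replacing $\max(u,v)$ by $u$ on the set $\{u>v\}$, which is smaller than $\{u\geq v\}$, so I would perturb first: for a constant $c>0$, apply the Plurifine locality theorem to the bounded psh functions $u+c$ and $v$ (with trivial current and form, so $r=n$) and use $dd^c(u+c)=dd^c u$ to get
\[
{\bf 1}_{\{u+c>v\}}(dd^c\max(u+c,v))^n={\bf 1}_{\{u+c>v\}}(dd^c u)^n .
\]
Since $\{u\geq v\}\subset\{u+c>v\}$, the right-hand side dominates ${\bf 1}_{\{u\geq v\}}(dd^c u)^n$, hence $(dd^c\max(u+c,v))^n\geq{\bf 1}_{\{u\geq v\}}(dd^c u)^n$. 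Then I would let $c=1/j\downarrow 0$: the functions $\max(u+1/j,v)$ form a decreasing sequence of uniformly bounded psh functions converging to $\max(u,v)$, so by Theorem~\ref{thm: BT convergence} the measures $(dd^c\max(u+1/j,v))^n$ converge weakly to $(dd^c\max(u,v))^n$; since each dominates the \emph{fixed} positive measure ${\bf 1}_{\{u\geq v\}}(dd^c u)^n$ and a weak limit of positive measures is positive (see Lemma~\ref{lem: semicontinuity}), the bound survives in the limit.

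\emph{Step 2: symmetrise and recombine.} Interchanging the roles of $u$ and $v$ in Step~1 gives $(dd^c\max(u,v))^n\geq{\bf 1}_{\{v\geq u\}}(dd^c v)^n\geq{\bf 1}_{\{u<v\}}(dd^c v)^n$. Now $\Omega=\{u\geq v\}\sqcup\{u<v\}$, and the measures ${\bf 1}_{\{u\geq v\}}(dd^c u)^n$ and ${\bf 1}_{\{u<v\}}(dd^c v)^n$ are carried by these disjoint Borel sets; so for every Borel $E$ I would split $\int_E(dd^c\max(u,v))^n$ into its parts over $E\cap\{u\geq v\}$ and $E\cap\{u<v\}$, bound these below by $\int_{E\cap\{u\geq v\}}(dd^c u)^n$ and $\int_{E\cap\{u<v\}}(dd^c v)^n$ respectively using the two one-sided estimates, and add. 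This yields the stated inequality of measures.

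\emph{Step 3: the diagonal consequence, and the main difficulty.} For the final assertion, apply the inequality with $u\leq v$: then $\max(u,v)=v$ and $\{u\geq v\}=\{u=v\}$, so it reads $(dd^c v)^n\geq{\bf 1}_{\{u=v\}}(dd^c u)^n+{\bf 1}_{\{u<v\}}(dd^c v)^n$, and multiplying through by ${\bf 1}_{\{u=v\}}$ annihilates the last term and leaves ${\bf 1}_{\{u=v\}}(dd^c u)^n\leq{\bf 1}_{\{u=v\}}(dd^c v)^n$. The main obstacle is Step~1: plurifine locality handles only the plurifine-open sets $\{u>v\}$ and $\{v>u\}$, and one cannot simply set $c=0$ there because $\{u\geq v\}$ need not be plurifine-open; perturbing by a \emph{positive} constant $c$ (rather than by $v-c$) is precisely what keeps $\max(u+c,v)$ a decreasing family as $c\downarrow 0$, so that Theorem~\ref{thm: BT convergence} is available. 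Everything after Step~1 is routine bookkeeping with mutually singular measures.
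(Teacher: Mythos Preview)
Your proof is correct and follows essentially the same approach as the paper: perturb by adding a positive constant, apply plurifine locality on the strict sublevel sets, and pass to the limit using Bedford--Taylor's convergence theorem along the resulting decreasing sequence. The only cosmetic difference is that the paper handles both terms of the lower bound in a single $\varepsilon$-perturbation before letting $\varepsilon\to 0$, whereas you establish the one-sided bound first, obtain the other by symmetry, and then recombine via the disjoint decomposition $\{u\geq v\}\sqcup\{u<v\}$.
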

\begin{proof}
   For any $\varepsilon>0$ we have, using the plurifine locality above, 
    \begin{eqnarray*}
(dd^c \max(u+\varepsilon,v))^n &\geq &  {\bf 1}_{\{u+\varepsilon> v\}} (dd^c  \max(u+\varepsilon,v))^n  \\
&+&  {\bf 1}_{\{u+\varepsilon< v\}}  (dd^c\max(u+\varepsilon,v))^n\\
&=&{\bf 1}_{\{u+\varepsilon> v\}} (dd^c u)^n  +  {\bf 1}_{\{u+\varepsilon< v\}}  (dd^c v)^n \\
&\geq &{\bf 1}_{\{u\geq  v\}} (dd^c u)^n  +  {\bf 1}_{\{u+\varepsilon< v\}}  (dd^c v)^n.
	 \end{eqnarray*}
  Sending $\varepsilon$ to zero we get the first statement. \\
  In particular, if $u\leq v$ we have $\max(u,v)=v$, thus
  $$  {\bf 1}_{\{u\geq  v\}} (dd^c v)^n \geq {\bf 1}_{\{u\geq v\}} (dd^c u)^n. $$
  This is what we want since $\{u\geq v\}=\{u=v\}$.
\end{proof}

\begin{lemma}
If $u$ and $v$ are bounded psh functions in $\Omega$ and $u=v$ near the boundary $\partial \Omega$, then $\int_{\Omega} (dd^c u)^n = \int_{\Omega} (dd^c v)^n$. 
\end{lemma}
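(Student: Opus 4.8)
The plan is to combine the algebraic telescoping identity for the complex Monge--Amp\`ere operator with an integration by parts against a cut-off function. Throughout, set $W:=\{u=v\}$, an open subset of $\Omega$, and $K:=\{u\neq v\}=\Omega\setminus W$; the hypothesis that $u=v$ near $\partial\Omega$ means exactly that the closure $\overline K$ is a compact subset of $\Omega$, i.e. $\overline K\Subset\Omega$.

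First I would record the telescoping decomposition. Since $u$ and $v$ are bounded psh, all the mixed products below are well defined and their factors commute by Proposition~\ref{prop: symm}. For $0\le j\le n-1$ put
\[
S_j:=(dd^c u)^{j}\wedge(dd^c v)^{n-1-j},
\]
which is a closed positive $(n-1,n-1)$-current, and recall $dd^c u\wedge S_j=dd^c(uS_j)$ by definition. Then, as an identity of (locally finite, signed) measures on $\Omega$,
\[
(dd^c u)^n-(dd^c v)^n=\sum_{j=0}^{n-1}\bigl(dd^c u\wedge S_j-dd^c v\wedge S_j\bigr)=:\sum_{j=0}^{n-1}\mu_j .
\]
Since $u=v$ on the open set $W$ and the operator $w\mapsto dd^c(wS_j)$ is local, one has $dd^c u\wedge S_j=dd^c v\wedge S_j$ on $W$; hence each $\mu_j$ is supported in $K\subset\overline K\Subset\Omega$ and is therefore a \emph{finite} signed measure.

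Next I would prove that $\int_\Omega\mu_j=0$ for every $j$. Fix $\chi\in C_c^\infty(\Omega)$ with $0\le\chi\le 1$ and $\chi\equiv 1$ on an open neighbourhood $V$ of $\overline K$. As $\chi=1$ on ${\rm supp}\,\mu_j$, we have $\int_\Omega\mu_j=\int_\Omega\chi\,\mu_j$. Since $S_j$ is closed and $\chi$ is smooth with compact support, integration by parts --- the same computation underlying Proposition~\ref{prop: integration by parts} --- gives
\[
\int_\Omega\chi\,dd^c u\wedge S_j=\langle dd^c(uS_j),\chi\rangle=\langle uS_j,\,dd^c\chi\rangle=\int_\Omega u\,dd^c\chi\wedge S_j ,
\]
and the analogous identity with $v$ in place of $u$; subtracting, $\int_\Omega\chi\,\mu_j=\int_\Omega(u-v)\,dd^c\chi\wedge S_j$. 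But $dd^c\chi\equiv0$ on $V$, so the measure $dd^c\chi\wedge S_j$ is supported in $\Omega\setminus V\subset\Omega\setminus\overline K\subset W$, on which $u-v=0$; hence $(u-v)\,dd^c\chi\wedge S_j=0$ and $\int_\Omega\mu_j=0$.

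Finally, I would integrate the telescoping identity over $\Omega$. Pairing it with a cut-off $\chi$ as above and letting $\chi$ increase to $1$, monotone convergence for the positive measures $(dd^c u)^n$ and $(dd^c v)^n$, together with $\int_\Omega\chi\,\mu_j\to\int_\Omega\mu_j=0$, yields $\int_\Omega(dd^c u)^n=\int_\Omega(dd^c v)^n$ in $[0,+\infty]$. I do not expect a genuine obstacle here: the substantive facts --- that the Monge--Amp\`ere operator on bounded psh functions is well defined, symmetric, and compatible with the integration by parts already established --- are all available, and the only point requiring care is bookkeeping: one must account for the fact that $\int_\Omega(dd^c u)^n$ may be infinite near $\partial\Omega$ (so total masses cannot be manipulated naively) and work instead with the compactly supported signed measures $\mu_j$ via the cut-off.
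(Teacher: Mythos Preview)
Your proof is correct and follows essentially the same approach as the paper: telescope $(dd^c u)^n-(dd^c v)^n$ through mixed Monge--Amp\`ere products, pair with a smooth cutoff $\chi\equiv 1$ near $\overline K$, integrate by parts to transfer $dd^c$ onto $\chi$, and use that $u-v$ vanishes on the support of $dd^c\chi$. The only cosmetic difference is that the paper bundles all the intermediate currents into a single $T=\sum_{k}(dd^c v)^k\wedge(dd^c u)^{n-1-k}$ and handles the whole difference at once, whereas you treat each telescoping term $\mu_j$ separately; this changes nothing of substance.
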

\begin{proof}
	If $u=v$ on $\Omega\setminus K$, where $K$ is a compact subset of $\Omega$, and $\chi$ is a smooth positive function which is identically $1$ in a neighborhood of $K$, then 
	\[
	\int_{\Omega} \chi[(dd^c v)^n- (dd^c u)^n]=\int_{\Omega} \chi (dd^c v \wedge T -dd^c u \wedge T),
	\]
	where $T= \sum_{k=0}^{n-1} (dd^c v)^k \wedge (dd^c u)^{n-k-1}$ is a closed positive current. Let us point out that the identity
 $$ (dd^c v)^n- (dd^c u)^n= dd^c(v-u)\wedge T$$ is clear when $u,v$ are smooth. The general case is proven by approximating $u$ and $v$ by smooth decreasing sequences of psh functions and passing to the limit using Theorem \ref{thm: BT convergence}.
 
 By definition and since $\chi$ is smooth,
	\[
	\int_{\Omega} \chi ( dd^c v \wedge T ) = \int_{\Omega} \chi dd^c (v T ) = \int_{\Omega}  dd^c \chi  \wedge vT.
	\]
	Applying this for $u$ and computing the difference we obtain 
	\[
	\int_{\Omega} \chi (dd^c v \wedge T -dd^c u \wedge T) = \int_{\Omega} (v-u) dd^c \chi \wedge T=0,
	\]
	because $(v-u)=0$ in $\Omega \setminus K$ and $\chi\equiv 1$ in a neighborhood of $K$. Letting $\chi \to {\bf 1}_{\Omega}$ (the function that is identically equal to $1$ in $\Omega$) we conclude.
\end{proof}

\begin{exercise}
    Let $u<0$ be a bounded psh function in $\Omega$ which vanishes on the boundary $\partial \Omega$, and set $v= -(-u)^{1/2}$. Prove that $v$ is psh in $\Omega$ and $\int_{\Omega}(dd^c v)^n=+\infty$. What happens if we consider $v_\alpha:=-(-u)^\alpha$, with $\alpha \in (0,1)$?
\end{exercise}

\begin{lemma}[Comparison of total masses]\label{lem: CP mass form}
	Assume $u,v$ are bounded psh functions in $\Omega$ such that $u = v$ on $\partial \Omega$ and $u\leq v$ in $\Omega$. Then
	\[
	\int_{\Omega} (dd^c u)^n \geq \int_{\Omega}(dd^c v)^n. 
	\]
\end{lemma}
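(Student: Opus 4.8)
The plan is to squeeze $v$ from below by psh functions that coincide with $u$ near $\partial\Omega$ — so that the preceding lemma computes their Monge–Amp\`ere mass exactly — and then let these functions increase to $v$. Concretely, for $\varepsilon>0$ I would set $w_\varepsilon:=\max(u,v-\varepsilon)$, a bounded psh function. Since $u=v$ on $\partial\Omega$ (that is, $v-u\to0$ at $\partial\Omega$), we have $u>v-\varepsilon$ in a neighbourhood of $\partial\Omega$, hence $w_\varepsilon=u$ outside some compact $K_\varepsilon\Subset\Omega$; the preceding lemma then gives $\int_\Omega(dd^c w_\varepsilon)^n=\int_\Omega(dd^c u)^n$ for every $\varepsilon>0$. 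On the other hand, because $u\le v$ one has $v-w_\varepsilon=\min(v-u,\varepsilon)\in[0,\varepsilon]$, so $w_\varepsilon$ increases to $v$ uniformly on $\Omega$ as $\varepsilon\downarrow0$.

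The remaining ingredient is that $(dd^c w_\varepsilon)^n\weak(dd^c v)^n$ as $\varepsilon\to0$. Granting this, Lemma~\ref{lem: semicontinuity} applied with the lower semicontinuous function $\idd_\Omega$ gives
\[
\int_\Omega(dd^c v)^n\ \le\ \liminf_{\varepsilon\to0}\int_\Omega(dd^c w_\varepsilon)^n\ =\ \int_\Omega(dd^c u)^n,
\]
which is the assertion. To justify the weak convergence one may invoke the Bedford–Taylor convergence theorem for increasing sequences (the monotone counterpart of Theorem~\ref{thm: BT convergence}); alternatively one argues directly, using the telescoping identity $(dd^c w_\varepsilon)^n-(dd^c v)^n=dd^c(w_\varepsilon-v)\wedge T_\varepsilon$ with $T_\varepsilon:=\sum_{k=0}^{n-1}(dd^c w_\varepsilon)^k\wedge(dd^c v)^{n-1-k}$ a closed positive current (cf. the proof of the preceding lemma). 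For a smooth test function $\chi$ with compact support this gives, by the very definition of the wedge product, $\langle(dd^c w_\varepsilon)^n-(dd^c v)^n,\chi\rangle=\langle(w_\varepsilon-v)\,T_\varepsilon,\,dd^c\chi\rangle$, whose absolute value is at most $\|w_\varepsilon-v\|_\infty\le\varepsilon$ times a Chern–Levine–Nirenberg bound for the mass of $T_\varepsilon$ over $\mathrm{supp}\,\chi$ that is uniform in $\varepsilon$ (the $w_\varepsilon$ being uniformly bounded); hence the pairing tends to $0$.

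The main obstacle is exactly this last limiting step. The preceding lemma only pins down the Monge–Amp\`ere mass of functions agreeing with $u$ \emph{near} $\partial\Omega$, and one must upgrade from functions that merely agree with $v$ \emph{on} $\partial\Omega$ to $v$ itself; this is where one needs either the monotone Bedford–Taylor theorem or the Chern–Levine–Nirenberg estimate above. (Note also that one cannot argue locally set by set: the maximum principle only yields $\int_\Omega(dd^c u)^n\ge\int_{\{u<v\}}(dd^c v)^n$, and the contribution of $(dd^c v)^n$ on $\{u=v\}$ need not vanish nor be dominated there by $(dd^c u)^n$, so the global mass really has to be treated as a boundary quantity.) Once the continuity statement is in hand, the rest is just the $\max$-construction together with the semicontinuity Lemma~\ref{lem: semicontinuity}.
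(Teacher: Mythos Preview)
Your argument is correct and follows essentially the same route as the paper: define $w_\varepsilon=\max(u,v-\varepsilon)$, use the preceding lemma to identify its total mass with that of $u$, and pass to the limit via Bedford--Taylor convergence together with Lemma~\ref{lem: semicontinuity}. You are in fact slightly more careful than the paper, which simply cites Theorem~\ref{thm: BT convergence} (stated there only for \emph{decreasing} sequences) at the limiting step; you correctly flag that the increasing version is what is needed and supply a self-contained alternative via the telescoping identity and the Chern--Levine--Nirenberg bound.
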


\begin{proof}
	For $\varepsilon>0$, define $w=\max(u,v-\varepsilon)$. Then $w=u$ near the boundary of $\Omega$,  hence by the above lemma, we have 
	\[
	\int_{\Omega} (dd^c \max(u,v-\varepsilon))^n=\int_{\Omega} (dd^c w)^n = \int_{\Omega} (dd^c u)^n. 
	\]
	Letting $\varepsilon\to 0$, thanks to Theorem \ref{thm: BT convergence} and Lemma \ref{lem: semicontinuity} we infer that
 $$ \liminf_{\varepsilon \rightarrow 0 } \int_{\Omega} (dd^c \max(u,v-\varepsilon))^n \geq \int_{\Omega} (dd^c \max(u,v))^n = \int_{\Omega} (dd^c v)^n.$$
Combining the above we can conclude.
\end{proof}

\begin{corollary}\label{cor: maximal function}
    If $u\in \PSH(\Omega)\cap L^{\infty}$, $u= 0$ on $\partial \Omega$ and $(dd^c u)^n=0$, then $u=0$. 
\end{corollary}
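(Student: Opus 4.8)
The plan is to derive the statement from the comparison of total masses (Lemma~\ref{lem: CP mass form}) together with plurifine locality, using the classical quadratic barrier so that no regularity assumption on $\partial\Omega$ (such as hyperconvexity) is needed. First I would record that $u\le 0$ on $\Omega$: being plurisubharmonic, $u$ is subharmonic on every complex line, and the condition $u=0$ on $\partial\Omega$ forces $\limsup_{z\to\zeta}u(z)\le 0$ for every $\zeta\in\partial\Omega$, so the maximum principle gives $u\le\sup_{\partial\Omega}u=0$. It then suffices to fix an arbitrary $z_0\in\Omega$ and show $u(z_0)\ge 0$.

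Set $D:=\sup_{w\in\Omega}|w-z_0|$, which is finite since $\Omega$ is bounded, and for $\varepsilon>0$ put
\[
\psi_\varepsilon(z):=\varepsilon\bigl(|z-z_0|^2-D^2\bigr),
\]
a smooth plurisubharmonic function on $\mathbb C^n$ with $\psi_\varepsilon\le 0$ on $\overline\Omega$ and $(dd^c\psi_\varepsilon)^n=\varepsilon^n(dd^c|z|^2)^n$, which is a strictly positive multiple of Lebesgue measure. Consider $w_\varepsilon:=\max(u,\psi_\varepsilon)$, a bounded plurisubharmonic function on $\Omega$ with $u\le w_\varepsilon\le 0$. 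Since $u\to 0$ at $\partial\Omega$ while $\psi_\varepsilon\le 0$ is continuous up to $\overline\Omega$, the function $w_\varepsilon$ has vanishing boundary values, i.e. $w_\varepsilon=u$ on $\partial\Omega$, so Lemma~\ref{lem: CP mass form} applied to the pair $u\le w_\varepsilon$ gives
\[
0=\int_\Omega(dd^c u)^n\ \ge\ \int_\Omega(dd^c w_\varepsilon)^n,
\]
hence $\int_\Omega(dd^c w_\varepsilon)^n=0$.

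On the open set $U_\varepsilon:=\{\psi_\varepsilon>u\}$ we have $w_\varepsilon=\psi_\varepsilon$, so by plurifine locality (the theorem above, applied with trivial current and form and $r=n$),
\[
{\bf 1}_{U_\varepsilon}(dd^c w_\varepsilon)^n={\bf 1}_{U_\varepsilon}(dd^c\psi_\varepsilon)^n.
\]
Integrating, $\int_{U_\varepsilon}(dd^c\psi_\varepsilon)^n\le\int_\Omega(dd^c w_\varepsilon)^n=0$; since $(dd^c\psi_\varepsilon)^n$ is a positive multiple of Lebesgue measure and $U_\varepsilon$ is open, this forces $U_\varepsilon=\emptyset$. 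Thus $u\ge\psi_\varepsilon$ on $\Omega$, and in particular $u(z_0)\ge\psi_\varepsilon(z_0)=-\varepsilon D^2$. Letting $\varepsilon\to 0^+$ gives $u(z_0)\ge 0$, so $u(z_0)=0$; as $z_0\in\Omega$ was arbitrary, $u\equiv 0$.

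The step I expect to require the most care is the verification that $w_\varepsilon=\max(u,\psi_\varepsilon)$ genuinely inherits the vanishing boundary values of $u$ (so that Lemma~\ref{lem: CP mass form} applies), which is precisely where the inequality $\psi_\varepsilon\le 0$ on all of $\overline\Omega$ enters; once that is in place the rest is bookkeeping. As a variant, one could instead first extract from Corollary~\ref{cor: max principle} and the mass-invariance lemma the comparison principle $\int_{\{u<v\}}(dd^c v)^n\le\int_{\{u<v\}}(dd^c u)^n$ for bounded plurisubharmonic $u,v$ with $\liminf_{z\to\partial\Omega}(u-v)\ge 0$, and apply it directly to $u$ and $v=\psi_\varepsilon$; this yields the same contradiction.
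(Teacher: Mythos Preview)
Your proof is correct and follows essentially the same route as the paper's: both build a quadratic barrier, take its maximum with $u$, and invoke the comparison of total masses (Lemma~\ref{lem: CP mass form}) to force the set where the barrier exceeds $u$ to be empty. The only cosmetic differences are that the paper argues by contradiction with a single barrier $\rho(z)=|z|^2-R^2$ (assuming $u<0$ somewhere and deriving $\int_\Omega(dd^c\max(u,\varepsilon\rho))^n>0$), whereas you argue directly, centering the barrier at each $z_0$ and letting $\varepsilon\to 0$; and you invoke plurifine locality where the paper simply uses that the Monge--Amp\`ere operator is local on the open set $\{\psi_\varepsilon>u\}$.
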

\begin{proof}
   Fix a negative strictly psh function $\rho$ in $\Omega$ (one can take $\rho(z)=|z|^2-R^2$, where $R>{\rm diam}(\Omega)$). Assume by contradiction that  $u<0$ in $\Omega$.  Then for some $\varepsilon>0$, the function $v=\max(u,\varepsilon \rho)$ satisfies $v\geq u$, $v=u=0$ on $\partial \Omega$, $v=\varepsilon \rho$ in an open subset of $\Omega$. From this we infer that $\int_\Omega (dd^c v)^n > 0$. Lemma \ref{lem: CP mass form} gives a contradiction.
\end{proof}

\subsection{Convergence in capacity}
A Borel set $E\subset \Omega$ is pluripolar if for each $z_0\in E$, there exist a neighborhood $U$ of $z_0$ and a function $u\in \PSH(U)$ not identically $-\infty$, such that $E\cap U \subset \{u=-\infty\}$. 

The Monge-Amp\`ere capacity of a Borel set $E$ in $\Omega$ is defined as 
\[
\capa(E):= \sup \left\{\int_E (dd^c u)^n \; : \; u\in \PSH(\Omega), -1\leq u \leq 0\right\}. 
\]
The outer capacity is defined as 
\[
\capa^*(E):= \inf\left\{\capa (U)\; : \; U \; \text{is open and} \; E\subset U\subset \Omega\right\}. 
\]
\begin{theorem}[Josefson's theorem]
	A Borel set $E$ is pluripolar if and only if $\capa^*(E)=0$ if and only if there exists $u\in \PSH(\Omega)$ such that $E\subset \{u=-\infty\}$. 
\end{theorem}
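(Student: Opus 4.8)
The statement packages three properties of a Borel set $E\subset\Omega$: (i) $E$ is locally pluripolar; (ii) $\capa^*(E)=0$; (iii) there exists $u\in\PSH(\Omega)$, $u\not\equiv-\infty$, with $E\subset\{u=-\infty\}$. Since (iii)$\Rightarrow$(i) is immediate from the definition of local pluripolarity, the plan is to prove (i)$\Rightarrow$(ii) and then the hard implication (ii)$\Rightarrow$(iii), which together close the cycle (note that (iii)$\Rightarrow$(ii) is a special case of (i)$\Rightarrow$(ii)). Throughout I would rely on two soft facts: $\capa^*$ is countably subadditive, and $\capa_\Omega\le\capa_B$ on subsets of $B$ whenever $B\subset\Omega$ (one restricts competitors), so that both (ii) and (iii) may be checked on a fixed countable cover of $\Omega$ by balls.

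For (i)$\Rightarrow$(ii): choose a countable cover of $\Omega$ by balls $B_i$ carrying $\phi_i\in\PSH(B_i)$ with $\phi_i\le 0$, $\phi_i\not\equiv-\infty$, $E\cap B_i\subset\{\phi_i=-\infty\}$, together with open balls $B_i'\Subset B_i$ that still cover $\Omega$. For $j\ge 1$ the open set $\{\phi_i<-j\}\cap B_i'$ contains $E\cap B_i'$, so for any competitor $w\in\PSH(B_i)$ with $-1\le w\le 0$, using $\idd_{\{\phi_i<-j\}}\le j^{-1}(-\phi_i)$ and the Chern--Levine--Nirenberg inequality (cf. \cite[Theorem 3.9]{GZbook}) on $\overline{B_i'}\Subset B_i$, one gets $\int_{\{\phi_i<-j\}\cap B_i'}(dd^c w)^n\le j^{-1}\int_{\overline{B_i'}}(-\phi_i)(dd^c w)^n\le C_i\,j^{-1}\|\phi_i\|_{L^1(B_i)}$. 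Taking the supremum over $w$ bounds $\capa_{B_i}(\{\phi_i<-j\}\cap B_i')$, hence $\capa^*_{B_i}(E\cap B_i')$, by $C_i\,j^{-1}\|\phi_i\|_{L^1(B_i)}$; letting $j\to\infty$ gives $\capa^*_{B_i}(E\cap B_i')=0$, so $\capa^*_\Omega(E\cap B_i')=0$, and countable subadditivity finishes. Running the same estimate with a global $u$ in place of $\phi_i$ yields (iii)$\Rightarrow$(ii).

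The core is (ii)$\Rightarrow$(iii). I would first invoke that $\capa^*$ is a Choquet capacity and that Borel sets are capacitable, so $\capa^*(E)=0$ forces $\capa(K)=0$ for every compact $K\subset E$; an exhaustion argument then reduces the problem to producing, for a compact $K\Subset\Omega$ (with $\Omega$ a ball) satisfying $\capa(K)=0$ and for each $m\in\bN$, a function $w_m\in\PSH(\Omega)$ with $w_m\le 0$, $w_m\le-1$ on $K$, and $\|w_m\|_{L^1(\Omega)}\le 2^{-m}$. Granting this, $u:=\sum_m w_m$ is a decreasing limit of psh functions lying in $L^1(\Omega)$, hence psh and $\not\equiv-\infty$, while $u\le\sum_m(-1)=-\infty$ on $K$. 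To build $w_m$, consider the relative extremal function $h_K:=\sup\{v\in\PSH(\Omega):v\le 0,\ v|_K\le-1\}$ and its usc regularization $h_K^*$: one establishes $h_K^*\in\PSH(\Omega)$, $-1\le h_K^*\le 0$, $h_K^*=0$ near $\partial\Omega$, and $\int_\Omega(dd^c h_K^*)^n=\capa(K)$; since $\capa(K)=0$, Corollary~\ref{cor: maximal function} forces $h_K^*\equiv 0$. By Choquet's lemma there is a countable subfamily $v_k$ of the defining family with $(\sup_k v_k)^*=0$; then $\tilde v_N:=\max(v_1,\dots,v_N)$ has $\tilde v_N\le 0$, $\tilde v_N|_K\le-1$, and increases to a function with vanishing regularization, so $\tilde v_N\to 0$ in $L^1(\Omega)$, and $w_m:=\tilde v_N$ works for $N$ large.

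I expect the main obstacle to be the identity $\int_\Omega(dd^c h_K^*)^n=\capa(K)$ linking the relative extremal function to the capacity: it rests on the comparison principle and on solvability properties of the homogeneous Monge--Amp\`ere equation, and is the only genuinely deep ingredient, together with the Choquet capacitability of Borel sets used to descend to compact subsets. The remaining steps — the series construction, the $L^1$-convergence extracted from Choquet's lemma, and the localization — are routine once those two facts are in hand. A secondary point that needs care is the behaviour near $\partial\Omega$ in the exhaustion step, where the hyperconvexity of balls (the existence of negative psh exhaustion functions) guarantees that the extremal functions vanish near the boundary, so that Corollary~\ref{cor: maximal function} applies.
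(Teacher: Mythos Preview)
The paper does not prove Josefson's theorem; it is stated as a classical result and used without proof, so there is no in-paper argument to compare against. I comment on your proposal directly.

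Your (i)$\Rightarrow$(ii) via the Chern--Levine--Nirenberg inequality and countable subadditivity is correct and standard. The implication (ii)$\Rightarrow$(iii), however, has a genuine gap. From capacitability you deduce $\capa(K)=0$ for every compact $K\subset E$, then assert that ``an exhaustion argument reduces the problem'' to a single such $K$, and you build $u=\sum_m w_m$ with $u=-\infty$ on that $K$. But a Borel set of zero outer capacity need not be $\sigma$-compact, and capacitability only approximates the \emph{number} $\capa^*(E)$ from inside by compacta, not the set $E$ itself. Your output is a psh function equal to $-\infty$ on one compact subset of $E$, not on $E$, and there is no way to glue such functions over the uncountably many compact $K\subset E$.

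The standard remedy uses the \emph{outer} side of the definition: for each $m$ choose an open set $U_m\supset E$ (relatively compact in $\Omega$, after localizing $E$ to a smaller ball) with $\capa(U_m)$ small. Because $U_m$ is open, $h_{U_m}^*\equiv -1$ on all of $U_m\supset E$, and one proves a quantitative bound of the type $\|h_{U_m}^*\|_{L^1}\le C\,\capa(U_m)^{1/n}$---for instance by combining $\capa(\{h_{U_m}^*<-t\})\le t^{-n}\capa(U_m)$ with the elementary comparison $dV\le C\,\capa$ and integrating in $t$. Choosing $\capa(U_m)$ small enough makes $\sum_m h_{U_m}^*$ converge in $L^1$ to a psh function that is $-\infty$ on $\bigcap_m U_m\supset E$. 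Note that your Choquet-lemma device, which hinges on $h_K^*\equiv 0$ when $\capa(K)=0$, gives no control when the capacity is positive but small, and therefore does not transfer to the open-set argument; the quantitative $L^1$ estimate is the missing ingredient.
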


A sequence of Borel functions $f_j$ converges in capacity to a Borel function $f$ in $\Omega$ if, for any $\varepsilon>0$ and all compact sets $K \subset \Omega$
$$ \lim_{j\rightarrow +\infty} \capa^*( K\cap \{|f_j-f|\geq  \varepsilon\} )=0.$$
An important fact we will use several times in what follows is that monotone convergence implies convergence in capacity \cite[Proposition 4.25]{GZbook}.

\subsection{Plurisubharmonic envelopes}

Given a bounded function $h: \Omega \rightarrow \mathbb R$, we define the psh envelope $P(h)$ to be the largest psh function in $\Omega$ lying below $h$. More precisely, $P(h)$ is the upper semicontinuous regularization of 
\[
x\mapsto \sup\{u(x) \; : \; u \in \PSH(\Omega), \; u\leq h \; \text{in}\; \Omega \}. 
\]

Since $h$ is bounded $P(h)\neq -\infty$ and $P(h) \in \PSH (\Omega)$. In general, $P(h)\leq h$ quasi everywhere (i.e outside a pluripolar set) 
When $h$ is continuous there is no need to take the upper semicontinuous regularization and $P(h)\leq h$ everywhere. 
\begin{lemma}\label{qe_envelope}
	If $h=g$ quasi everywhere in $\Omega$ then $P(h)=P(g)$. 
\end{lemma}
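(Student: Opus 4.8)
The plan is to prove Lemma \ref{qe_envelope} by showing both inequalities $P(h) \le P(g)$ and $P(g) \le P(h)$; by symmetry only one of them needs an argument. The key observation is that, by Josefson's theorem, the pluripolar set $E$ on which $h$ and $g$ disagree is contained in $\{w = -\infty\}$ for some $w \in \PSH(\Omega)$; we may assume $w \le 0$ after subtracting a constant, and since the statement is local we may also assume $w$ is bounded above on the relevant region. This function $w$ is the tool that lets us ``absorb'' the bad set.

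First I would take any competitor $u \in \PSH(\Omega)$ with $u \le h$ in $\Omega$, and try to produce from it a psh function dominated by $g$. The natural candidate is $u_\delta := u + \delta w$ for $\delta > 0$: this is psh (sum of psh functions, property (2) of the first Proposition), and on the set $\{w = -\infty\} \supset E$ it equals $-\infty \le g$, while on $\Omega \setminus \{w=-\infty\}$ we have $h = g$ so $u_\delta \le u \le h = g$. Hence $u_\delta \le g$ everywhere, so $u_\delta \le P(g)$ by definition of the envelope, and therefore $u + \delta w \le P(g)$ for every $\delta > 0$. Letting $\delta \to 0^+$ at each point where $w > -\infty$, i.e. quasi everywhere, gives $u \le P(g)$ quasi everywhere; since $u$ and $P(g)$ are both psh and agree quasi everywhere outside a pluripolar set — in particular almost everywhere with respect to Lebesgue measure, as pluripolar sets have measure zero — property (1) of the first Proposition (psh functions equal a.e. are equal everywhere) upgrades this to $u \le P(g)$ everywhere. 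Taking the supremum over all such $u$ and passing to the upper semicontinuous regularization yields $P(h) \le P(g)$.

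The symmetric argument with the roles of $h$ and $g$ exchanged gives $P(g) \le P(h)$, and the two inequalities together prove the lemma.

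The main obstacle is the bookkeeping around where $w$ is finite versus infinite, and making sure the limit $\delta \to 0^+$ is taken only at points of finiteness and then the conclusion is propagated to all of $\Omega$ via the ``equal almost everywhere implies equal everywhere'' principle for psh functions; one must also check $u + \delta w \not\equiv -\infty$ so that it is a legitimate element of $\PSH(\Omega)$, which holds since $w \not\equiv -\infty$ and $u$ is finite on a dense set. If one prefers to avoid Josefson and work only with the definition of pluripolar, the local nature of the claim lets one cover $\Omega$ by balls on which a defining psh function for $E$ exists and then glue, but invoking Josefson's theorem (already stated above) is the cleanest route.
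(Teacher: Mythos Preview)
Your proof is correct and follows essentially the same approach as the paper's: use Josefson's theorem to find a negative psh function $\psi$ with poles along the exceptional set $E$, add $\varepsilon\psi$ to a psh function dominated by $h$ to obtain a psh candidate dominated by $g$, then let $\varepsilon\to 0$. The only cosmetic difference is that the paper applies this directly to $P(h)$ itself (using that $P(h)\le h$ quasi everywhere), whereas you apply it to an arbitrary competitor $u\le h$ and then take the supremum; your route sidesteps the extra pluripolar set coming from $P(h)\le h$ q.e., but the two arguments are otherwise identical.
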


\begin{proof}
	Assume $f=g$ in $\Omega\setminus E$ where $E$ is a pluripolar set. By Josefson's theorem there exists $\psi\in \PSH^-(\Omega)$ such that $E\subset \{\psi=-\infty\}$. We then observe that for all $\varepsilon>0$, $P(f)+\varepsilon \psi \leq f$ and  $P(g)+\varepsilon \psi \leq g$ quasi everywhere (say outside $P$) and so $P(f)+\varepsilon \psi \leq g$ and $P(g)+\varepsilon \psi \leq f$ outside $E\cup P$. It follows that outside $P$ 
	\[
	P(f) + \varepsilon \psi \leq P(g) \leq P(f) - \varepsilon \psi. 
	\]
	Letting $\varepsilon\to 0$, we obtain $P(f)=P(g)$ quasi-everywhere, hence everywhere by the properties of the psh functions.
\end{proof}

\begin{lemma}[Balayage]\label{lem: balayage}
	Let $\varphi\in \PSH(\Omega)\cap L^{\infty}$ and $B(x_0,r)\Subset \Omega$. Then there exists $u\in \PSH(\Omega)$ such that $u\geq \varphi$ with equality in $\Omega\setminus B(x_0,r)$ and $(dd^c u)^n=0$ in $B(x_0,r)$.  
\end{lemma}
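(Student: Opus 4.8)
The plan is to construct $u$ by the classical balayage (sweeping out) procedure: take the supremum of all psh functions lying above $\varphi$ that are ``harmonic-like'' inside the ball. Concretely, I would set
\[
u(x) := \sup\{w(x) \; : \; w\in \PSH(\Omega),\ w\geq \varphi \text{ on } \Omega\setminus B(x_0,r)\},
\]
or equivalently use the negative formulation $-v$ where $v = P(-\varphi \text{ outside the ball})$ is an envelope. One should first check the family is nonempty (it contains $\varphi$ itself, or a large negative constant adjusted near the ball) and uniformly bounded above: since $\varphi$ is bounded, say $\varphi\leq M$, any competitor $w$ satisfies $w\leq M$ by the maximum principle applied on $B(x_0,r)$ (on the boundary $w\geq\varphi$ does not directly bound $w$ from above, so instead one notes $w-M\leq 0$ outside the ball forces, via subharmonicity, $w\leq M + \sup_{\partial B}(w-M)^+$... more carefully: restrict attention to competitors $w\leq M$, which is WLOG since $\max(w,M)$ still works when $\varphi\le M$). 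Then $u$ is well-defined, $u\geq\varphi$ everywhere, and its upper semicontinuous regularization $u^*$ is psh; one shows $u=u^*$ outside $B(x_0,r)$ using that $\varphi$ is already psh there (so $\varphi$ is a competitor locally and the envelope is automatically u.s.c. and equals $\varphi$ on $\Omega\setminus\overline{B(x_0,r)}$, while on $\partial B(x_0,r)$ a barrier argument or the negligibility of $\{u<u^*\}$—a pluripolar set, by the standard theory—gives $u=u^*=\varphi$ there too).

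Next I would establish $u\geq\varphi$ with equality on $\Omega\setminus B(x_0,r)$: the inequality is immediate from the definition, and equality outside the closed ball holds because $\varphi$ itself is admissible. On $\partial B(x_0,r)$ equality follows since $u=u^*$ is u.s.c. and $\geq \varphi$, combined with the fact that $u\le \varphi$ on the boundary by approaching from outside where $u=\varphi$; one uses upper semicontinuity of $u$ and continuity considerations, or invokes that the set where strict inequality holds on $\partial B$ would be pluripolar hence (being a subset of a real hypersurface on which $u-\varphi\ge 0$) handled by Lemma~\ref{qe_envelope}-type reasoning.

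For the Monge-Ampère part, $(dd^c u)^n = 0$ in $B(x_0,r)$, the key point is that $u$ is \emph{maximal} in $B(x_0,r)$: if not, there is a ball $B'\Subset B(x_0,r)$ and a psh function $\tilde u\geq u$ on $\Omega$, with $\tilde u>u$ somewhere in $B'$, obtained by locally solving a Dirichlet problem (or by taking $\max(u, \text{something})$); but $\tilde u$ is then also a competitor in the defining family (it still dominates $\varphi$ outside $B(x_0,r)$ since it agrees with $u$ there), contradicting that $u$ is the supremum. Maximality then implies $(dd^c u)^n=0$ by Corollary~\ref{cor: maximal function}: locally on any small ball $\mathbb B\Subset B(x_0,r)$, compare $u$ with the solution $h$ of $(dd^c h)^n=0$ on $\mathbb B$ equal to $u$ on $\partial\mathbb B$ — maximality forces $u\geq h$, while the comparison principle (Lemma~\ref{lem: CP mass form}) forces $\int_{\mathbb B}(dd^c u)^n \leq \int_{\mathbb B}(dd^c h)^n = 0$ if $u\le h$; combining, $u=h$ and so $(dd^c u)^n=0$ on $\mathbb B$. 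Since $\mathbb B$ is arbitrary, $(dd^c u)^n = 0$ throughout $B(x_0,r)$.

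The main obstacle I expect is the regularity/boundary behavior: showing that the envelope $u$ equals its u.s.c. regularization $u^*$ on all of $\Omega$ (in particular across $\partial B(x_0,r)$) and that $u^*=\varphi$ exactly on $\partial B(x_0,r)$, so that the ``equality in $\Omega\setminus B(x_0,r)$'' is genuine. This requires either a barrier construction at boundary points of the ball (easy here since $\partial B(x_0,r)$ is smooth and strictly pseudoconvex, so one can write down explicit psh barriers) or the general negligibility theorem that $\{u<u^*\}$ is pluripolar together with the observation that a pluripolar subset of the hypersurface $\partial B(x_0,r)$ cannot prevent the u.s.c. function $u^*$, squeezed between $\varphi$ and the obvious upper barriers, from agreeing with $\varphi$ there. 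Everything else is a routine assembly of the maximum principle, Corollary~\ref{cor: maximal function}, and Bedford--Taylor convergence.
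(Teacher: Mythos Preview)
Your envelope is set up with the inequality reversed. With the family
\[
\{w\in\PSH(\Omega):\ w\geq\varphi\ \text{on}\ \Omega\setminus B(x_0,r)\}
\]
the supremum is $+\infty$: any constant $C\geq\sup_\Omega\varphi$ is a competitor, and your attempted upper bound via the maximum principle cannot work because the constraint gives a \emph{lower} bound on $w$ outside the ball, not an upper one. The correct balayage envelope is
\[
u=\Big(\sup\{w\in\PSH(\Omega):\ w\leq\varphi\ \text{on}\ \Omega\setminus B(x_0,r)\}\Big)^*,
\]
for which $\varphi$ is a competitor (giving $u\geq\varphi$), the constraint gives $u\leq\varphi$ on $\Omega\setminus B$, and the upper bound $w\leq\sup_\Omega\varphi$ follows from the maximum principle in $B$. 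Several of your later sentences (``equality outside because $\varphi$ is admissible'', the maximality argument) only make sense with this corrected sign.

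Even after the fix, two steps remain incomplete. First, to get $(dd^c u)^n=0$ you solve an auxiliary Dirichlet problem on $\mathbb B\Subset B$ with boundary values $u$; but $u$ is merely bounded psh, not continuous, so the Dirichlet problem you invoke (Theorem 5.14 in \cite{GZbook}, as used in the paper) is not directly available. Second, the boundary issue on $\partial B(x_0,r)$ that you flag is real: controlling $u^*$ from inside the ball requires an actual barrier construction, not just the negligibility of $\{u<u^*\}$.

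The paper sidesteps both difficulties by a smooth approximation: take $\varphi_j\searrow\varphi$ smooth psh near $\bar B$, solve $(dd^c v_j)^n=0$ in $B$ with continuous data $v_j=\varphi_j$ on $\partial B$, glue $u_j=v_j$ in $B$ and $u_j=\varphi_j$ outside (psh since $v_j\geq\varphi_j$), observe $u_j$ is decreasing by the comparison principle, and let $u=\lim u_j$. Then $u\geq\varphi$ with equality outside $B$ is immediate, and $(dd^c u)^n=0$ in $B$ follows from Theorem~\ref{thm: BT convergence}. This avoids any envelope-regularity or discontinuous-Dirichlet issues.
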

\begin{proof}
	Let $(\varphi_j)$ be a sequence of smooth psh functions decreasing to $\varphi$ in $\Omega'\Subset \Omega$, where $B=B(x_0,r)\Subset \Omega'$. For each $j$, we solve the Dirichlet problem \cite[Theorem 5.14]{GZbook} $(dd^c v_j)^n =0$ in $B$, with $v_j=\varphi_j$ on $\partial B$. The solution $v_j$ is the upper envelope of all psh functions in $B$ with boundary values $\varphi_j$, thus  $v_j\geq \varphi_j$ in $B$. We then define the function $u_j$ equal to $v_j$ in $B$ and equal to $\varphi_j$ in $\Omega\setminus B$. By construction $u_j$ is psh in $\Omega$. By the comparison principle again, $u_j$ decreases to some psh function $u$. We have $u\geq \varphi$ with equality in $\Omega\setminus B$, and $(dd^c u)^n=0$ in $B$ by Bedford-Taylor's convergence theorem (Theorem \ref{thm: BT convergence}). 
\end{proof}

\begin{theorem}[Orthogonal relation]\label{thm: orthog relation}
	Assume $h$ is quasi lower semicontinous and bounded in $\Omega$. Then the Monge-Amp\`ere measure $(dd^c P(h))^n$ vanishes in $\{P(h)<h\}$. 
\end{theorem}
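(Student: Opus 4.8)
The plan is to argue by contradiction, the engine being the balayage of Lemma~\ref{lem: balayage} combined with a convexity trick that keeps the balayage competitor below $h$.

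Write $\varphi := P(h)$ and $\mu := (dd^c\varphi)^n$. Recall that $\varphi \le h$ quasi everywhere, and that directly from the definition of the Monge-Amp\`ere capacity one has $\mu(E)\le (\operatorname{osc}_\Omega h)^n\,\capa^*(E)$ for every Borel $E\Subset\Omega$ (rescale $\varphi$ to lie in $[-1,0]$); in particular $\mu$ charges no set of zero outer capacity, hence no pluripolar set. Since $\{\varphi<h\}=\bigcup_{\delta>0}\{\varphi<h-\delta\}$ and $\Omega$ is exhausted by compacts, it suffices to show $\mu(\{\varphi<h-\delta\}\cap K)=0$ for each $\delta>0$ and compact $K\Subset\Omega$. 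When $h$ is lower semicontinuous, $h-\varphi$ is lower semicontinuous and $\{\varphi<h-\delta\}=\{h-\varphi>\delta\}$ is open, so it is enough to prove: \emph{if $O\subset\Omega$ is open and $h>\varphi+\delta$ on $O$, then $\mu(O)=0$.} For a general quasi lower semicontinuous $h$ one first peels off a set of small capacity: by quasi lower semicontinuity, $\{h-\varphi>\delta\}$ agrees, up to a set of arbitrarily small outer capacity (which $\mu$ does not see), with a genuine open set on which $h>\varphi+\delta$, so the claim again follows from the boxed statement applied to such open sets and letting the exceptional capacity tend to $0$.

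So suppose $\mu(O)>0$ for some open $O$ with $h>\varphi+\delta$ on $O$. Since $\mu(O)>0$, there is a ball $B=B(x_0,r)$ with $\overline B\subset O$ and $\mu(B)>0$. I would apply Lemma~\ref{lem: balayage} to get $u\in\PSH(\Omega)$ with $u\ge\varphi$, $u=\varphi$ on $\Omega\setminus B$, and $(dd^cu)^n=0$ in $B$. If $u\equiv\varphi$ on $B$ then $u\equiv\varphi$ on all of $\Omega$, whence $0=(dd^cu)^n(B)=\mu(B)>0$, absurd; so $u>\varphi$ somewhere in $B$, and $M:=\sup_B(u-\varphi)\in(0,+\infty)$ (finiteness because $\inf_{\partial B}\varphi\le u\le\sup_{\partial B}\varphi$ on $B$ by the maximum principle). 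Now put $t:=\min\{1/2,\,\delta/(2M)\}$ and
\[
u_t:=(1-t)\varphi+tu\in\PSH(\Omega).
\]
Then $u_t=\varphi$ on $\Omega\setminus B$, whereas on $\overline B\subset O$ one has $u_t-\varphi=t(u-\varphi)\le tM\le\delta/2<h-\varphi$, i.e. $u_t<h$ on $\overline B$. Hence $u_t\le h$ everywhere on $\overline B$ and $u_t=\varphi\le h$ quasi everywhere off $B$, so $u_t\le h$ quasi everywhere in $\Omega$. By Josefson's theorem there is $\rho\in\PSH^-(\Omega)$ whose pole set contains the (pluripolar) set $\{u_t>h\}$; then $u_t+\varepsilon\rho\le h$ everywhere in $\Omega$ for every $\varepsilon>0$, so $u_t+\varepsilon\rho\le P(h)=\varphi$ by definition of the envelope, and letting $\varepsilon\to0^+$ gives $u_t\le\varphi$ quasi everywhere, hence almost everywhere; since $\max(u_t,\varphi)$ is plurisubharmonic and equals $\varphi$ almost everywhere, it equals $\varphi$ everywhere, i.e. $u_t\le\varphi$ everywhere. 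This contradicts $u_t>\varphi$ on the nonempty set $\{u>\varphi\}\subset B$. Therefore $\mu(O)=0$, completing the argument.

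The main obstacle is the convexity step: a bare balayage competitor $u$ will in general exceed $h$ inside $B$, and the key idea is that the convex combination $(1-t)\varphi+tu$ — still plurisubharmonic, still $\ge\varphi$, still strictly above $\varphi$ precisely where $u$ is — damps the excess $u-\varphi$ uniformly below the positive gap between $h$ and $\varphi$ available on $\overline B$, thereby producing an admissible competitor for $P(h)$. A secondary, more technical point is the passage from lower semicontinuous to merely quasi lower semicontinuous $h$, where one no longer has a genuine open set where $h>\varphi$ but only such a set up to arbitrarily small capacity; this is harmless exactly because $\mu$ puts no mass on sets of small capacity.
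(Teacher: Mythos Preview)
Your treatment of the lower semicontinuous case is correct and is a genuine variant of the paper's argument. The paper chooses the ball small enough that, by upper semicontinuity of $P(h)$ and lower semicontinuity of $h$, the balayage $u$ itself already satisfies $u\le \sup_{\partial B}P(h)<h$ on $B$; you instead allow $u$ to overshoot and then damp it via the convex combination $u_t=(1-t)\varphi+tu$. Both produce an admissible competitor for $P(h)$ strictly above $\varphi$, and both yield the contradiction. Your route has the minor advantage of not needing the ball to be small.

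The gap is in your passage from lower semicontinuous to quasi lower semicontinuous $h$. Quasi lower semicontinuity gives, for each $\varepsilon>0$, an open $U$ with $\capa(U)<\varepsilon$ such that $h|_{\Omega\setminus U}$ is lsc; hence $\{h-\varphi>\delta\}\setminus U=W\setminus U$ for some open $W$. This shows $\{h-\varphi>\delta\}$ and $W$ differ by a set of capacity $<\varepsilon$, but it does \emph{not} give $h>\varphi+\delta$ on all of $W$: on $W\cap U$ you have no control over $h$. Your ``boxed statement'' needs $h>\varphi+\delta$ on the full open set in order to force $u_t\le h$ there; with the inequality only on $W\setminus U$ you get $u_t\le h$ merely outside a set of small (not zero) capacity, and that is not enough to conclude $u_t\le P(h)$---Lemma~\ref{qe_envelope} requires a pluripolar exceptional set. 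So the reduction, as written, does not close.

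The paper handles the quasi lsc case by a different mechanism: it takes a decreasing sequence of lsc functions $h_j\to h$ in capacity and quasi everywhere (via \cite[Lemma~2.4]{GLZ19}), uses that $P(h_j)\searrow P(h)$ (\cite[Proposition~2.2]{GLZ19} together with Lemma~\ref{qe_envelope}), and passes to the limit in $\int_\Omega(1-e^{P(h_j)-h_j})(dd^cP(h_j))^n=0$ using Bedford--Taylor convergence and the capacity bound you already noted. If you want to keep your balayage/convexity approach, you would need to push it through this approximation rather than through the ``peel off a small-capacity set'' shortcut.
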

A function $h$ is called quasi lower semicontinous if for each $\varepsilon>0$, there exists an open set $U$ such that $\capa(U) <\varepsilon$ and the restriction of $h$ on $\Omega\setminus  U$ is lower semicontinuous. 
\begin{proof}
	We first assume that $h$ is lower semicontinuous in $\Omega$. Then the set $U:=\{P(h)<h\}$ is open in $\Omega$, which allows us to use the balayage argument. Fix $x_0\in U$ and $\varepsilon>0$ such that $P(h)(x_0)<h(x_0)-3\varepsilon$. Since $P(h)$ is upper semicontinuous and $h$ is lower semicontinuous, we can find $r>0$ such that, for all $x\in \bar{B}$,
	\[
	P(h)(x)<P(h)(x_0)+ \varepsilon \; \text{and}\; h(x)>h(x_0)-\varepsilon,
	\] 
	where $B=B(x_0,r)$ is the Euclidean ball centered at $x_0$ of radius $r$. By Lemma \ref{lem: balayage}, we can find a function $u\in \PSH(\Omega)$ such that $u\geq P(h)$ with equality in $\Omega\setminus B$, and $(dd^c u)^n=0$ in $B$. By the maximum principle, for all $x\in B$,
	\[
	u(x)\leq \sup_{\partial B} u= \sup_{\partial B} P(h) < P(h)(x_0)+ \varepsilon < h(x_0) -2\varepsilon <h(x)-\varepsilon. 
	\]
	It thus follows that $u\leq h$ in $\Omega$ (recall that $u=P(h)$ outside $B$), hence by definition of $P(h)$, we have $u\leq P(h)$. By the construction of $u$, we also have $P(h)\leq u$, therefore $u=P(h)$. Consequently, $(dd^c P(h))^n=0$ in $B(x_0,r)$, hence it vanishes in $U$ since $x_0$ was chosen arbitrarily. 
	
	Assume now that $h$ is quasi lower semicontinuous. By \cite[Lemma 2.4]{GLZ19} there exists a decreasing sequence $(h_j)$ of lower semicontinuos functions which converges to $h$ in capacity and quasi everywhere. Thus $h_j \searrow g$ and $g=h$ except for a set $E$ of capacity zero. Moreover, \cite[Proposition 2.2]{GLZ19} insures that $P(h_j)$ decreases to $P(g)$, which is equal to $P(h)$ by Lemma \ref{qe_envelope}. By the previous step we know that $(dd^c P(h_j))^n=0$ on $\{P(h_j)<h_j\}$, or in other words
 $$\int_\Omega \left(1-e^{P(h_j)-h_j}\right)(dd^c P(h_j))^n=0.$$ Also observe that, since $P(h_j)$ is uniformly bounded, there exists $A>0$ such that $P(h_j)\geq -A$, hence for any Borel set $B$ we have 
 $${\bf 1}_B (dd^c P(h_j))^n \leq A^n \capa (B).$$

We then have
\begin{eqnarray*}
&&\int_\Omega  \left|1-e^{P(h_j)-h}\right|(dd^c P(h_j))^n \\
&&=\int_{\Omega\setminus E} \left(1-e^{P(h_j)-g}\right)(dd^c P(h_j))^n +\int_{E} \left(1-e^{P(h_j)-h}\right)(dd^c P(h_j))^n\\
&&\leq  \int_{\Omega\setminus E} \left(1-e^{P(h_j)-h_j}\right)(dd^c P(h_j))^n + A^n \sup_\Omega \left|1-e^{P(h_j)-h}\right| \capa (E)\\
&& = 0.
\end{eqnarray*}
It follows from Theorem \ref{thm: BT convergence} that
$$\int_\Omega  \left|1-e^{P(h)-h}\right|(dd^c P(h))^n=0.$$
We can then conclude that the measure $dd^c P(h))^n$ is supported on the contact set $\{P(h)=h\}$
\end{proof}

\subsection{Monge-Amp\`ere operator with non-closed forms}
Let $(X,\omega)$ be a compact Hermitian manifold of dimension $n$ and fix a smooth real $(1,1)$-form $\theta$.    We want to define the Monge-Amp\`ere measure $(\theta+dd^c u)^n$ for a bounded $\theta$-psh function $u$. Let $\Omega$ be an open subset biholomorphic to an Euclidean ball in $\mathbb{C}^n$. Then there exists a smooth strictly psh function $\psi$ in a neighborhood of $\Omega$. 

\begin{definition}
	A function $u$ is quasi-psh in an open set $U\subset X$ if it is locally the sum of a smooth and a psh function. It is $\theta$-psh if it is quasi-psh and $\theta+dd^c u\geq 0$ in the sense of currents on $X$. We let $\PSH(U,\theta)$ be the set of all $\theta$-psh functions in $U$ which are locally integrable. 
\end{definition}

\begin{definition}
    Assume $u$ is quasi-psh and bounded in $\Omega$, and let $\psi$ be a smooth psh function in $\Omega$ such that $dd^c (\psi + u)\geq 0$. For $1\leq p\leq n$, we define 
\begin{equation}\label{def ddc}
(dd^c u)^p = \sum_{k=0}^p (-1)^k\binom{p}{k} (dd^c \psi)^k \wedge (dd^c (\psi+u))^{p-k}.
\end{equation}
\end{definition}
Since $\psi+u$ is psh, $(dd^c (\psi+u))^{p-k}$ is a closed positive $(p-k,p-k)$-current. This, combined with the fact that $dd^c \psi$ is a smooth form, ensures that $(dd^c u)^p$ (defined as above) is a $(p,p)$-current of order zero. We now show that it does not depend on the choice of $\psi$. Assume $\psi'$ is another smooth psh function in $\Omega$. By replacing $\psi'$ with $\psi+\psi'$, we can further assume that $dd^c \psi'\geq dd^c \psi$. 
Taking convolution with smoothing kernel, $(\psi+u)_j:= (\psi+u)\star \rho_{1/j}$, we obtain a sequence of smooth psh functions decreasing to $\psi+u$. Then 
\[
\sum_{k=0}^p \binom{p}{k} (-dd^c \psi)^k \wedge (dd^c (\psi+u)_j)^{p-k}=\sum_{k=0}^p \binom{p}{k} (-dd^c \psi')^k \wedge (dd^c (\psi+u)_j+\psi'-\psi)^{p-k}.
\]
Observe that the above identity follows by the formal trick $(-dd^c \psi)^k= (-dd^c \psi' + dd^c(\psi'-\psi))^k$, re-arranging the terms and using that for $\ell=0, \cdots k$
$$\binom{p}{k} \binom{k}{\ell} = \binom{p}{k-\ell} \binom{p-k-\ell}{\ell}.$$
Also, all the computations make sense since all the forms are smooth.
By Bedford-Taylor's convergence theorem, we obtain 
\[
\sum_{k=0}^p \binom{p}{k} (-dd^c \psi)^k \wedge (dd^c (\psi+u))^{p-k}=\sum_{k=0}^p \binom{p}{k} (-dd^c \psi')^k \wedge (dd^c (\psi'+u))^{p-k}.
\]

Given a bounded quasi-psh function $u$ on $X$, by the above analysis, the current $(dd^c u)^p$ is well-defined on each coordinate chart and, by the above observation, can be glued to define a global closed $(p,p)$-current on $X$.

\begin{lemma}
    Let $u$ be a bounded quasi-psh function on $X$ and $u_j$ be a sequence of smooth quasi-psh functions decreasing to $u$. Assume also that, for some fixed constant $A>0$, we have $A\omega+dd^c u_j\geq 0$ for all $j$. Then $(dd^c u_j)^p$ weakly converges to $(dd^c u)^p$. In particular, if $u \in \PSH(X,\theta)$, then $\theta_u^p:=(\theta+dd^c u)^p$ is a positive $(p,p)$-current. 
\end{lemma}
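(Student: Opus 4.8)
The plan is to prove the first (convergence) assertion locally and then glue. Since weak convergence of currents is tested against forms with compact support, I would fix a finite cover of $X$ by open sets $\Omega$ biholomorphic to balls in $\mathbb C^n$, pass to slightly smaller balls $\Omega'\Subset\Omega$, argue on each $\Omega'$, and sum over a subordinate partition of unity. On such a ball I would choose \emph{once and for all} a smooth strictly psh function $\psi$ (a large multiple of $|z|^2$) with $dd^c\psi\geq A\omega$ on $\overline{\Omega'}$. The role of the uniform hypothesis $A\omega+dd^c u_j\geq 0$ is precisely that then $dd^c(\psi+u_j)\geq 0$ for \emph{every} $j$ with the \emph{same} $\psi$ (and likewise $dd^c(\psi+u)\geq 0$), so that $(\psi+u_j)_j$ is a decreasing, uniformly bounded sequence of smooth psh functions converging to the bounded psh function $\psi+u$.

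Next I would expand both $(dd^c u_j)^p$ and $(dd^c u)^p$ via the defining formula \eqref{def ddc} relative to this fixed $\psi$. Each summand is $(\pm)\binom{p}{k}(dd^c\psi)^k\wedge(dd^c(\psi+u_j))^{p-k}$, where $(dd^c\psi)^k$ is a fixed smooth closed positive current. Applying Theorem \ref{thm: BT convergence} — in the usual Monge--Amp\`ere form, obtained from the stated potential version by applying the weakly continuous operator $dd^c$ to the potential slot — with $T=(dd^c\psi)^k$ gives $(dd^c(\psi+u_j))^{p-k}\wedge(dd^c\psi)^k\weak(dd^c(\psi+u))^{p-k}\wedge(dd^c\psi)^k$; the $k=p$ term is constant in $j$. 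Summing the finitely many terms with their signs yields $(dd^c u_j)^p\weak(dd^c u)^p$ on $\Omega'$, and gluing finishes the first assertion. I note that this local argument works verbatim on any Euclidean ball, a fact I will reuse below.

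For the last assertion, given $u\in\PSH(X,\theta)$ I would again localize to a chart. Choosing $\psi$ smooth strictly psh with $-A\omega\leq\theta\leq A\omega$ and $dd^c\psi\geq A\omega$ on $\overline{\Omega'}$, the function $w:=\psi+u$ is bounded psh since $dd^c w=dd^c\psi+dd^c u\geq A\omega-\theta\geq 0$. I would then set $w_j:=w\star\rho_{1/j}$ (smooth psh, decreasing to $w$) and $u_j:=w_j-\psi$; these are smooth quasi-psh, decrease to $u$, and satisfy $dd^c u_j\geq-dd^c\psi\geq-C\omega$ uniformly. Using $dd^c w=dd^c\psi+dd^c u$ and the fact that convolving the positive current $\theta+dd^c u$ with $\rho_{1/j}$ stays positive, one has the identity
\[
\theta+dd^c u_j=\bigl(\theta-\theta\star\rho_{1/j}\bigr)+\bigl((dd^c\psi)\star\rho_{1/j}-dd^c\psi\bigr)+(\theta+dd^c u)\star\rho_{1/j},
\]
so $\theta+dd^c u_j\geq-\eps_j\omega$ with $\eps_j\to 0$ (the first two brackets tend to $0$ in $C^0$ because $\theta$ and $dd^c\psi$ are smooth). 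Hence $\theta+\eps_j\omega+dd^c u_j\geq 0$, so $(\theta+\eps_j\omega+dd^c u_j)^p$ is a \emph{positive} smooth $(p,p)$-form. Expanding it as $\sum_k\binom{p}{k}(\theta+\eps_j\omega)^{p-k}\wedge(dd^c u_j)^k$, with $(\theta+\eps_j\omega)^{p-k}\to\theta^{p-k}$ in $C^0$ and, by the first part applied on the chart, $(dd^c u_j)^k\weak(dd^c u)^k$, I would conclude $(\theta+\eps_j\omega+dd^c u_j)^p\weak\sum_k\binom{p}{k}\theta^{p-k}\wedge(dd^c u)^k=\theta_u^p$; a weak limit of positive currents is positive, and positivity is local, so $\theta_u^p\geq 0$ on $X$.

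I expect the two genuinely delicate points to be: (i) securing a \emph{single} auxiliary $\psi$ valid for the whole sequence on a chart — this is exactly what the uniform bound $A\omega+dd^c u_j\geq 0$ provides, and without it the right-hand side of \eqref{def ddc} would change with $j$ and Bedford--Taylor would not directly apply; and (ii) the estimate $\theta+dd^c u_j\geq-\eps_j\omega$ in the last part, which is why one must regularize $\psi+u$ rather than $u$ directly and accept the vanishing error term $\eps_j\omega$. The remaining steps — the binomial bookkeeping, the passage from the potential form of Theorem \ref{thm: BT convergence} to Monge--Amp\`ere products, and the continuity of wedging a weakly convergent family of currents against a $C^0$-convergent family of smooth forms — are routine.
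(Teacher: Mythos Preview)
Your proposal is correct and follows essentially the same route as the paper: localize to a chart, choose a single $\psi$ with $dd^c\psi\geq A\omega$ so that $\psi+u_j$ is a decreasing sequence of bounded psh functions, and apply Bedford--Taylor's convergence theorem term by term in the expansion \eqref{def ddc}. For the positivity of $\theta_u^p$ the paper proceeds identically---regularize locally to get smooth $u_j\searrow u$ with $\theta+\eps_j\omega+dd^c u_j\geq 0$ for some $\eps_j\to 0$, observe that $(\theta+\eps_j\omega+dd^c u_j)^p$ is positive, and pass to the limit---though your write-up supplies more detail on the regularization step (convolving $\psi+u$ rather than $u$, and the explicit decomposition controlling $\eps_j$) than the paper's one-line justification.
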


Note that $\theta_u^p$ is defined as $\sum_{k=0}^p  \binom{p}{k}\theta^{p-k} \wedge (dd^c u)^{k}$, where the last term is defined in \eqref{def ddc}.
\begin{proof}
Since the statement is local, we can assume that $X=\Omega$ is a coordinate chart. 
    Assume $\psi$ is a smooth psh function such that $dd^c \psi\geq A\omega$ in $\Omega$. By Bedford-Taylor's convergence theorem, the following weak convergence holds
    \[
    \sum_{k=0}^p \binom{p}{k}(-dd^c \psi)^k \wedge (dd^c (\psi+u_j))^{p-k} \to \sum_{k=0}^p \binom{p}{k}(-dd^c \psi)^k \wedge (dd^c (\psi+u))^{p-k}. 
    \]
    Observe that the assumption $A\omega+dd^c u_j \geq 0$ is used to ensure that $(dd^c (\psi+u_j))^{p-k} $ and $(dd^c (\psi+u))^{p-k} $ are positive currents.
    This proves the desired convergence. If $u\in \PSH(X,\theta)$, then, by taking convolution with a smoothing kernel, we can locally find a sequence of smooth functions $u_j$ decreasing to $u$ such that $\theta +2^{-j}\omega +dd^c u_j\geq 0$. Then the convergence result in the first statement yields that 
    \[
    (\theta +2^{-j}\omega +dd^c u_j)^p \to (\theta+dd^c u)^p,
    \]
    in the weak sense of currents, proving that the limit is also a positive current. 
\end{proof}

\begin{lemma}\label{lem: max principle global}
	Assume $u,v$ are bounded $\theta$-psh functions in an open set of $\mathbb C^n$. Then 
	\[
	(\theta+dd^c \max(u,v))^n \geq {\bf 1}_{\{u\geq v\}} (\theta+dd^c u)^n +  {\bf 1}_{\{u< v\}} (\theta+dd^c v)^n. 
	\]
 In particular, if $u\leq v$ then ${\bf 1}_{\{u= v\}} (\theta+dd^c u)^n \leq {\bf 1}_{\{u= v\}} (\theta+dd^c v)^n$. 
\end{lemma}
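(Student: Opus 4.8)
The plan is to imitate the proof of the local maximum principle (Corollary~\ref{cor: max principle}), the one genuinely new ingredient being a plurifine locality property for the non-closed operator $(\theta+dd^c\cdot)^n$. Since the asserted inequality is a local statement between positive measures, it suffices to prove it on a small ball $B\Subset\Omega$. On such a ball fix a smooth psh function $\psi$ with $dd^c\psi\geq\theta$ (for instance $\psi=A|z|^2$ with $A$ large, depending on a bound for $\theta$ on a slightly larger ball), so that for every bounded $\theta$-psh function $w$ the function $S_w:=\psi+w$ is bounded psh; put $\beta:=\theta-dd^c\psi$, a fixed smooth $(1,1)$-form. Combining the definition~\eqref{def ddc} of $(dd^cw)^k$ with the formula $\theta_w^n=\sum_{k=0}^n\binom{n}{k}\theta^{n-k}\wedge(dd^cw)^k$ and the binomial theorem, one obtains
\[
(\theta+dd^cw)^n=\sum_{m=0}^n\binom{n}{m}\,\beta^{n-m}\wedge(dd^c S_w)^m .
\]
Note also that $S_{\max(u,v)}=\max(S_u,S_v)$, $\{u>v\}=\{S_u>S_v\}$ and $\{u<v\}=\{S_u<S_v\}$.

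\textbf{Step 1 (plurifine locality).} I would first establish
\[
{\bf 1}_{\{u>v\}}(\theta+dd^c\max(u,v))^n={\bf 1}_{\{u>v\}}(\theta+dd^cu)^n
\]
and, by symmetry, the same identity with $u<v$ and the roles of $u,v$ exchanged. By the displayed expansion this reduces to the $n+1$ identities ${\bf 1}_{\{S_u>S_v\}}(dd^c\max(S_u,S_v))^m\wedge\beta^{n-m}={\bf 1}_{\{S_u>S_v\}}(dd^cS_u)^m\wedge\beta^{n-m}$, $0\leq m\leq n$, which are precisely the plurifine locality theorem applied to the bounded psh functions $S_u,S_v$ (with the trivial current and with $\alpha=\beta^{n-m}$). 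The one point to watch is that the smooth form $\beta^{n-m}$, although not positive, is merely carried through an identity, so its sign is immaterial; the binomial factors are common to both sides.

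\textbf{Step 2 (perturbation).} This now runs exactly as in Corollary~\ref{cor: max principle}. For $\varepsilon>0$ the measure $(\theta+dd^c\max(u+\varepsilon,v))^n$ is positive (by the previous lemma), so, the sets $\{u+\varepsilon>v\}$ and $\{u+\varepsilon<v\}$ being disjoint, it dominates ${\bf 1}_{\{u+\varepsilon>v\}}(\theta+dd^c\max(u+\varepsilon,v))^n+{\bf 1}_{\{u+\varepsilon<v\}}(\theta+dd^c\max(u+\varepsilon,v))^n$; by Step~1 (and $\theta+dd^c(u+\varepsilon)=\theta+dd^cu$) this equals ${\bf 1}_{\{u+\varepsilon>v\}}(\theta+dd^cu)^n+{\bf 1}_{\{u+\varepsilon<v\}}(\theta+dd^cv)^n\geq{\bf 1}_{\{u\geq v\}}(\theta+dd^cu)^n+{\bf 1}_{\{u+\varepsilon<v\}}(\theta+dd^cv)^n$. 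Letting $\varepsilon=1/j\downarrow0$: on the left $\max(S_u+1/j,S_v)\searrow\max(S_u,S_v)$, so Theorem~\ref{thm: BT convergence} together with the displayed expansion yields the weak convergence $(\theta+dd^c\max(u+1/j,v))^n\to(\theta+dd^c\max(u,v))^n$; on the right ${\bf 1}_{\{u+1/j<v\}}(\theta+dd^cv)^n\nearrow{\bf 1}_{\{u<v\}}(\theta+dd^cv)^n$ by monotone convergence. Pairing with an arbitrary nonnegative $\chi\in C_c(B)$ and letting $j\to+\infty$ gives the desired inequality. Finally, if $u\leq v$ then $\max(u,v)=v$ and $\{u\geq v\}=\{u=v\}$, so subtracting ${\bf 1}_{\{u<v\}}(\theta+dd^cv)^n$ from both sides of the inequality just obtained yields ${\bf 1}_{\{u=v\}}(\theta+dd^cv)^n\geq{\bf 1}_{\{u=v\}}(\theta+dd^cu)^n$.

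The main obstacle is Step~1: the excerpt contains no plurifine locality for the non-closed operator $(\theta+dd^c\cdot)^n$, so it has to be extracted from the closed case via the local potential $\psi$, and one must make sure the auxiliary non-positive forms $\beta^{n-m}$ pass through the identities without harm and that the outcome does not depend on the choice of $\psi$ (which is already guaranteed by the well-definedness of $(\theta+dd^c\cdot)^n$).
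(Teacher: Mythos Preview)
Your proposal is correct and follows precisely the approach the paper indicates in its one-line proof (``similar to that of Corollary~\ref{cor: max principle} using plurifine locality''). You have carefully spelled out the reduction to the closed case via the local potential $\psi$ and the expansion $(\theta+dd^cw)^n=\sum_m\binom{n}{m}\beta^{n-m}\wedge(dd^cS_w)^m$, which is exactly what makes the plurifine locality theorem (stated in the paper for a smooth form $\alpha$ of arbitrary sign) applicable term by term; the perturbation and limit in Step~2 are handled correctly.
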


\begin{proof}
	The proof is similar to that of Corollary \ref{cor: max principle} using plurifine locality (see also \cite[Lemma 1.2]{GL22}). 
\end{proof}

\begin{lemma}\label{lem: CP ball}
    Let $B=B(x_0,r)$ be an Euclidean ball in $\mathbb{C}^n$, $r>0$. Assume $u,v$ are bounded $\theta$-psh functions in $B$  and 
    \[
 (\theta+dd^c u)^n \leq (\theta+dd^c v)^n.
    \]
    Then 
    \[
    u -v \geq \liminf_{z\to \partial B} (u-v)(z). 
    \]
\end{lemma}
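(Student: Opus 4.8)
The plan is to argue by contradiction along the classical comparison‑principle strategy: perturb $v$ to a strictly $\theta$‑psh competitor, glue it with $u$ by a maximum, and compare Monge--Amp\`ere masses on $B$; the only genuinely Hermitian point will be the control of the torsion.

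Set $M:=\liminf_{z\to\partial B}(u-v)(z)$. If $M=-\infty$ there is nothing to prove, and since replacing $v$ by $v+M$ does not change $(\theta+dd^cv)^n$ I may assume $M=0$ and suppose, for a contradiction, that $u(z_0)-v(z_0)=-2\gamma<0$ for some $z_0\in B$. Write $\beta_0:=dd^c|z-x_0|^2>0$, fix $A\gg 1$ with $\theta+A\beta_0\ge\beta_0$, and set $\rho_0:=A(|z-x_0|^2-r^2)$, so that $\rho_0$ is $\theta$‑psh, $\rho_0\le 0$ on $B$, $\rho_0=0$ on $\partial B$, and $\theta+dd^c\rho_0\ge\beta_0$. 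For small $\varepsilon,\delta>0$ put $g:=v+\varepsilon\rho_0-\delta$, so $\theta+dd^cg=(\theta+dd^cv)+\varepsilon A\beta_0$; expanding the $n$‑th power and discarding the positive mixed terms gives
$$(\theta+dd^cg)^n\ \ge\ (\theta+dd^cv)^n+(\varepsilon A)^n\beta_0^n\ \ge\ (\theta+dd^cu)^n+(\varepsilon A)^n\beta_0^n.$$
Since $\rho_0\to 0$ at $\partial B$, $\limsup_{z\to\partial B}(g-u)(z)=M-\delta=-\delta<0$, so $g<u$ near $\partial B$ and $D:=\{g>u\}\Subset B$; and for $\varepsilon,\delta$ small enough (in terms of $\gamma$ and $Ar^2$) one has $g(z_0)>u(z_0)$, so $D\neq\emptyset$. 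Finally $D$ has positive Lebesgue measure: otherwise $g\le u$ a.e., and the sub‑mean value inequality applied to the quasi‑psh functions $g,u$ at $z_0$ would force $g(z_0)\le u(z_0)$.

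Let $h:=\max(u,g)$, a bounded $\theta$‑psh function equal to $u$ on a neighbourhood of $\partial B$. By plurifine locality,
$$(\theta+dd^ch)^n=\mathbf{1}_D(\theta+dd^cg)^n+\mathbf{1}_{\{g<u\}}(\theta+dd^cu)^n+\mathbf{1}_{\{g=u\}}(\theta+dd^ch)^n,$$
and, since $u\le h$, the maximum principle (Lemma~\ref{lem: max principle global}) gives $\mathbf{1}_{\{g=u\}}(\theta+dd^ch)^n\ge\mathbf{1}_{\{g=u\}}(\theta+dd^cu)^n$. Granting the mass inequality
$$\int_B(\theta+dd^ch)^n\ \le\ \int_B(\theta+dd^cu)^n,$$
which comes from $h=u$ near $\partial B$, we integrate the pointwise relations over $B$, cancel the common terms, and obtain $\int_D(\theta+dd^cg)^n\le\int_D(\theta+dd^cu)^n$; combined with $(\theta+dd^cg)^n\ge(\theta+dd^cu)^n+(\varepsilon A)^n\beta_0^n$ this forces $(\varepsilon A)^n\int_D\beta_0^n\le 0$, i.e.\ $D$ is Lebesgue‑null — contradicting the previous paragraph.

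The step I expect to be the main obstacle is the mass inequality. In the K\"ahler case one writes $(\theta+dd^ch)^n-(\theta+dd^cu)^n=dd^c(h-u)\wedge T$ with $T=\sum_{j=0}^{n-1}(\theta+dd^ch)^j\wedge(\theta+dd^cu)^{n-1-j}$ and integrates by parts; since $T$ is closed and $h-u$ is supported away from $\partial B$ the result is $0$. When $d\theta\neq 0$ the current $T$ is no longer closed and one picks up torsion contributions governed by $dd^c\theta$. The plan to handle this is to reduce, by adding the large multiple $A|z-x_0|^2$, to the genuinely plurisubharmonic functions $\hat h:=h+A|z-x_0|^2$ and $\hat u:=u+A|z-x_0|^2$, which agree near $\partial B$ and satisfy $\hat u\le\hat h$, apply the mass comparison for psh functions agreeing near the boundary (together with Lemma~\ref{lem: CP mass form}) to $\hat h,\hat u$, and then expand $(dd^c\hat h)^n=(\theta+dd^ch+R)^n$ and $(dd^c\hat u)^n=(\theta+dd^cu+R)^n$ with $R:=A\beta_0-\theta\ge 0$ smooth, controlling the finitely many mixed terms containing $R$ by the positivity of the currents involved and by $h=u$ near $\partial B$. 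It is this torsion bookkeeping — rather than the soft part of the argument — that carries the real content of the lemma in the non‑K\"ahler setting.
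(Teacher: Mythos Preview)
Your argument is correct in the K\"ahler case but has a genuine gap in the Hermitian setting, precisely at the point you flag yourself: the ``mass inequality'' $\int_B(\theta+dd^ch)^n\le\int_B(\theta+dd^cu)^n$ for $h=u$ near $\partial B$. This inequality (or even equality) is \emph{not} available when $d\theta\neq 0$; as the paper notes after Theorem~\ref{thm: domination principle}, invariance of Monge--Amp\`ere mass is equivalent to the Guan--Li condition. Your proposed salvage via $\hat h=h+A|z-x_0|^2$, $\hat u=u+A|z-x_0|^2$ does give $\int_B(dd^c\hat h)^n=\int_B(dd^c\hat u)^n$ (Lemma after Corollary~\ref{cor: max principle}), but expanding $(dd^c\hat h)^n=(\theta_h+R)^n$ with $R=A\beta_0-\theta$ yields
\[
0=\sum_{k=0}^n\binom{n}{k}\int_B\bigl(\theta_h^k-\theta_u^k\bigr)\wedge R^{n-k},
\]
and you cannot isolate the $k=n$ term: for $1\le k\le n-1$ one has $\theta_h^k-\theta_u^k=dd^c(h-u)\wedge T_{k-1}$, and integrating by parts against the \emph{non-closed} form $T_{k-1}\wedge R^{n-k}$ produces terms involving $dd^c\theta$, $d\theta\wedge d^c\theta$, etc., that have no sign and no a~priori bound. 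Already for $k=1$ the term is $\int_B(h-u)\,dd^c R^{n-1}$, which is governed by the torsion of $\theta$ and does not vanish. So the ``bookkeeping'' you defer is not bookkeeping at all --- it is the entire difficulty, and it is not resolvable along these lines.

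The paper's proof takes a completely different route that sidesteps any global mass comparison. After the same reduction to $u=v$ near $\partial B$ with $(\theta+dd^cv)^n\ge(\theta+dd^cu)^n$, it considers the \emph{psh envelope} $w:=P(u-v)$, which is a genuinely psh function with zero boundary values. By the orthogonal relation (Theorem~\ref{thm: orthog relation}), $(dd^cw)^n$ is supported on the contact set $D=\{w=u-v\}$. On $D$ one has $w+v=u$ with $w+v\le u$ globally, so Lemma~\ref{lem: max principle global} gives $\mathbf 1_D(\theta+dd^c(w+v))^n\le\mathbf 1_D(\theta+dd^cu)^n\le\mathbf 1_D(\theta+dd^cv)^n$; expanding the left side and dropping positive mixed terms yields $\mathbf 1_D(dd^cw)^n=0$. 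Thus $(dd^cw)^n\equiv 0$ and Corollary~\ref{cor: maximal function} forces $w=0$, i.e.\ $u\ge v$. The key point is that the argument is purely \emph{pointwise/local} on the contact set and never integrates over $B$, so no torsion terms appear.
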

\begin{proof}
   By adding a constant we can assume $\liminf_{z\to \partial B} (u-v)(z)=0$, meaning that for any $\varepsilon>0$, there exists a sequence $(z_j) \subseteq B$, $z_j\rightarrow z\in \partial B$ such that $u(z_j)\geq v(z_j)-\varepsilon$. Then, we consider $v_{\varepsilon}=\max(u,v-\varepsilon)$, which is $\theta$-psh in $B$ and satisfies $v_{\varepsilon}=u$ in a neighborhood of $\partial B$. By the maximum principle (Corollary \ref{cor: max principle}) together with the assumption, $(\theta+dd^c v_{\varepsilon})^n \geq (\theta+dd^c u)^n$. Our goal is to prove that $u\geq v_{\varepsilon}$ and then let $\varepsilon\to 0$. In the sequel, we remove the subscript $\varepsilon$. Since $u,v$ are bounded, for some large constant $A$, we have $(u-v)(z)\geq A (|z-x_0|^2-r^2)$ for any $z\in B$; in particular $w=P(u-v)\in \PSH(B)$ vanishes on $\partial B$, since $A (|z-x_0|^2-r^2)\leq w\leq u-v$. We next show that $(dd^c w)^n=0$. Since $(dd^c w)^n$ is supported on the contact set  $D=\{w=u-v\}$ (Theorem \ref{lem: balayage} can be applied since $u-v$ is quasi l.s.c.), it is enough to show that $\idd_D (dd^c w)^n=0$. This follows from the maximum principle Lemma \ref{lem: max principle global} as we show below. Since $w+v$ and $u$ are bounded $\theta$-psh functions with $w+v\leq u$, we have, by Lemma \ref{lem: max principle global}, 
   \[
   \idd_D (\theta+dd^c (w+v))^n \leq \idd_D (\theta+dd^c u)^n. 
   \]
   Expanding the left-hand side and using the assumption again we obtain 
    \[
   \idd_D (\theta+dd^c v )^n + \idd_D (dd^c w)^n \leq  \idd_D (\theta+dd^c (w+v))^n  \leq \idd_D (\theta+dd^c u)^n\leq  \idd_D (\theta+dd^c v )^n.
   \]
	We then obtain $\idd_D(dd^c w)^n=0$, as we wanted. By Corollary \ref{cor: maximal function} we infer that $w=0$. In particular $u-v\geq 0$, finishing the proof. 
\end{proof}

\begin{prop}\label{prop: CP}
	  Let $U\subset X$ an open set and assume there exists a bounded smooth strictly psh function $\rho$ in $U$.  Assume $u,v \in \PSH(U,\theta)\cap L^{\infty}$ such that $\theta_u^n \leq \theta_v^n$ in $U$.  Then 
   \[
   u-v \geq \liminf_{z\to \partial U} (u-v)(z).
   \]
\end{prop}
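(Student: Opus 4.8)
The plan is to reduce to the comparison principle on Euclidean balls (Lemma \ref{lem: CP ball}) via the plurisubharmonic envelope on $U$, following the strategy of that lemma's proof. First some reductions: set $m=\liminf_{z\to\partial U}(u-v)(z)$, which is finite since $u,v$ are bounded (if $\partial U=\emptyset$ then $U$ is a union of connected components of the compact manifold $X$, which carry no bounded strictly psh function, so this case does not occur); replacing $u$ by $u-m$ we may assume $m=0$, so it suffices to show $u\geq v$ on $U$. We may also assume $U$ is connected, as each component is open, carries the restriction of $\rho$, and has boundary contained in $\partial U$.

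Next I would introduce $w:=P_U(u-v)$, the psh envelope in $U$ of the bounded function $u-v$. Since $u,v$ are bounded quasi-psh functions they are quasi-continuous, hence $u-v$ is quasi-l.s.c., and Theorem \ref{thm: orthog relation} gives that $(dd^c w)^n$ is carried by the contact set $\{w=u-v\}$. On that set, the key observation is that $w+v$ is $\theta$-psh with $w+v\leq u$, so Lemma \ref{lem: max principle global} yields $\idd_{\{w+v=u\}}(\theta+dd^c(w+v))^n\leq \idd_{\{w+v=u\}}\theta_u^n$; expanding $(\theta+dd^c(w+v))^n=(\theta_v+dd^c w)^n\geq \theta_v^n+(dd^c w)^n$ and invoking the hypothesis $\theta_u^n\leq\theta_v^n$ forces $\idd_{\{w=u-v\}}(dd^c w)^n=0$. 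Combined with the orthogonal relation, and using that $(dd^c w)^n$ (the Monge-Amp\`ere measure of a bounded psh function) does not charge the pluripolar set $\{w>u-v\}$, we conclude $(dd^c w)^n=0$ throughout $U$.

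It then remains to deduce $u\geq v$ from $(dd^c w)^n=0$: once we know $w\geq 0$, we get $u-v\geq w\geq 0$ quasi-everywhere, hence $u\geq v$ almost everywhere and therefore everywhere. This is where the bounded strictly psh function $\rho$ enters, exactly as in Corollary \ref{cor: maximal function}: if $w$ were negative somewhere, comparing $w$ with $\max(w,\varepsilon\rho+c)$ on an appropriate subdomain and invoking the comparison of total masses (Lemma \ref{lem: CP mass form}) yields a contradiction. The main obstacle — and the only genuinely delicate point — is the behaviour of $w$ near $\partial U$: one needs $\limsup_{z\to\partial U}w(z)\leq 0$, which is immediate from $w\leq u-v$ and $m=0$, together with enough lower control to run the $\rho$-argument. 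Since $\rho$ need not vanish along $\partial U$, it cannot serve as a naive lower barrier, so the lower estimate must instead be propagated inward from the boundary by applying Lemma \ref{lem: CP ball} (with $v=0$) to the maximal function $w$ on a collar of small balls covering a neighbourhood of $\partial U$. Granting this, $w\geq 0$ and the proof is complete.
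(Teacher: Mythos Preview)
Your strategy of taking the global envelope $w=P_U(u-v)$ and showing $(dd^cw)^n=0$ is exactly the mechanism used in the proof of Lemma~\ref{lem: CP ball}, and that part of your argument is fine. The difficulty you flag at the end, however, is real and not resolved by the ``propagation'' sketch you give. Applying Lemma~\ref{lem: CP ball} (with $\theta=0$, $v=0$) on a small ball $B\Subset U$ only yields $w\geq\inf_{\partial B}w$ on $B$; since $\partial B\subset U$ and you have no a priori lower control of $w$ there, this is circular. The relation $w\leq u-v$ together with $\liminf_{z\to\partial U}(u-v)=0$ gives \emph{upper} control of $w$ near $\partial U$, not the lower control you need, and nothing prevents the minimizing sequence for $w$ from drifting to $\partial U$. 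Without a barrier that vanishes on $\partial U$ (which $\rho$ is not assumed to provide), Corollary~\ref{cor: maximal function} does not apply and the argument stalls.

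The paper avoids this boundary issue entirely by a different organization: rather than building a global maximal function on $U$, it perturbs by $a\rho$ \emph{before} localizing. One sets $m_a=\liminf_{z\to\partial U}(u-v-a\rho)(z)$ and argues by contradiction that if $\inf_U(u-v-a\rho)<m_a$, then any minimizing sequence must accumulate at an interior point $x_0\in U$. On a small ball $B\subset U$ around $x_0$ one then compares $u$ with $\varphi=v+a\rho-a|x-x_0|^2$; strict plurisubharmonicity of $\rho$ gives $\theta_\varphi^n\geq\theta_v^n\geq\theta_u^n$, and Lemma~\ref{lem: CP ball} applied on $B$ yields a contradiction via the extra $ar^2$ coming from the quadratic term. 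Letting $a\to 0$ gives the result. Thus the role of $\rho$ is not to serve as a boundary barrier for a maximal function on $U$, but to force the putative minimum into the interior so that one can invoke Lemma~\ref{lem: CP ball} as a black box on a single small ball.
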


\begin{proof}
For $a>0$ we define 
\[
m_a = \liminf_{z\to \partial U} (u(z)-v(z) - a \rho(z)). 
\]
We first prove that $u(x)-v(x) - a \rho(x) \geq m_a$ for all $x\in U$. Clearly $$(u-v)(x) -a\rho(x) \geq \inf_{x\in U}(u(x)-v(x)-a\rho(x)).$$  Now, take the sequence $(x_j)\subset U$ such that 
\[
\lim_{j\rightarrow +\infty} (u-v)(x_j) -a\rho(x_j)=\inf_{x\in U}(u(x)-v(x)-a\rho(x)).
\]
A subsequence of $(x_j)$ converges to $x_0\in \overline{U}$.
If the desired inequality fails then $x_0\in U$. This is our contradiction assumption.
Take a small ball $B=B(x_0,r)\subset U$ around $x_0$, and consider 
\[
\varphi(x) := v(x) + a\rho(x) - a |x-x_0|^2, \; x\in B. 
\]
Here, without loss of generality, we assume that $dd^c |x|^2\leq dd^c \rho$.
Using the assumption and a direct computation (in particular that $dd^c (\rho- |x-x_0|^2) \geq 0$) we obtain 
\[
(\theta+dd^c u)^n \leq (\theta+dd^c v)^n \leq (\theta+dd^c \varphi)^n, 
\]
in $B$. 
The comparison principle, Lemma \ref{lem: CP ball}, then yields, for all $x\in B$,
\[
(u-\varphi)(x) \geq \liminf_{z\to \partial B} (u-\varphi)(z) \geq \inf_{x\in U} (u(x)-v(x) -a\rho(x)) + ar^2.   
\]
Taking $x=x_j$ and letting $j\to +\infty$ (remember that $x_j\rightarrow x_0$), we arrive at 
\[
 \lim_{j\to +\infty}(u(x_j)-v(x_j) -a\rho(x_j)) \geq \inf_{x\in U} (u(x)-v(x) -a\rho(x)) + ar^2,
\]
which yields $ar^2\leq 0$, a contradiction. 
We thus conclude that 
\[
u(z)-v(z)-a\rho(z)\geq \liminf_{y\to \partial U} (u(y)-v(y)-a\rho(y)).
\]
Since $\rho$ is bounded, letting $a\to 0$, we arrive at the conclusion. 
\end{proof}

\begin{theorem}[Domination principle]\label{thm: domination principle}
    Assume $u,v \in \PSH(X,\omega)\cap L^{\infty}$ satisfy
    \[
    \idd_{\{u<v\}}(\omega +dd^c u)^n\leq c \idd_{\{u<v\}} (\omega +dd^c v)^n,
    \]
    for some constant $0\leq c<1$. Then $u\geq v$. 
\end{theorem}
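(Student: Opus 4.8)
The plan is to argue by contradiction: assume $u\not\geq v$. Since $\max(u,v)$ is $\omega$-psh and two $\omega$-psh functions that agree almost everywhere agree everywhere, the set $\{u<v\}$ then has positive Lebesgue measure. The main object will be the rooftop envelope $P$, defined as the largest $\omega$-psh function on $X$ with $P\leq\min(u,v)$; as $\min(u,v)$ is bounded, $P$ is a bounded $\omega$-psh function, $P\leq\min(u,v)\leq u,v$, and one checks at once that $u\geq v$ on $X$ if and only if $P=v$. So it suffices to prove $P=v$, knowing $P\neq v$.

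The first ingredient is the orthogonality relation for $P$, the $\omega$-psh analogue of Theorem \ref{thm: orthog relation}: the measure $(\omega+dd^cP)^n$ is carried by the contact set $\{P=\min(u,v)\}=\{P=u\}\cup\{P=v\}$. This is obtained by the balayage argument of Theorem \ref{thm: orthog relation}, using that $\min(u,v)$ is quasi lower semicontinuous (because $\omega$-psh functions are quasi-continuous, hence so is their minimum) and approximating $u,v$ by decreasing sequences of smooth functions so that the coincidence sets become open, then passing to the limit with Theorem \ref{thm: BT convergence}. Combining this with the maximum principle (Lemma \ref{lem: max principle global}, which is local in nature): on $\{P=u\}$ one has $(\omega+dd^cP)^n\leq(\omega+dd^cu)^n$, and on $\{P=v\}$ one has $(\omega+dd^cP)^n\leq(\omega+dd^cv)^n$.

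Next I would bring in the hypothesis. On $\{P<v\}$ the measure $(\omega+dd^cP)^n$ is carried by $\{P=u\}$; there $P=u<v$, so this portion of $X$ lies in $\{u<v\}$, and the maximum principle followed by the assumption gives
\[
\idd_{\{P<v\}}(\omega+dd^cP)^n\leq\idd_{\{P<v\}}(\omega+dd^cu)^n\leq c\,\idd_{\{P<v\}}(\omega+dd^cv)^n .
\]
Since $P\leq v$ (so $\{P>v\}=\emptyset$) and $(\omega+dd^cP)^n\leq(\omega+dd^cv)^n$ on $\{P=v\}$, we conclude that $(\omega+dd^cP)^n\leq(\omega+dd^cv)^n$ on all of $X$, with the strict improvement by the factor $c$ on $\{P<v\}$.

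It remains to contradict $P\leq v$, $P\neq v$. Applying the comparison principle on balls (Lemma \ref{lem: CP ball}, or Proposition \ref{prop: CP}) to the $\omega$-psh functions $P,v$ on each coordinate ball $B\Subset X$ gives $P-v\geq\liminf_{z\to\partial B}(P-v)(z)$, so $P-v$ obeys a minimum principle on every coordinate ball; covering $X$ by finitely many of them, this forces $P-v$ to be a constant $a\leq0$. If $a<0$ then $\{P<v\}=X$, so the strict bound $(\omega+dd^cP)^n\leq c(\omega+dd^cv)^n$ holds everywhere; as $(\omega+dd^cP)^n=(\omega+dd^c(v+a))^n=(\omega+dd^cv)^n$ and $0\leq c<1$, this forces $(\omega+dd^cv)^n\equiv 0$, which is impossible for a bounded $\omega$-psh function on the compact Hermitian manifold $(X,\omega)$. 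Hence $a=0$, that is $P=v$, that is $u\geq v$. I expect this last step to be the main obstacle: in the Hermitian, non-K\"ahler setting the total Monge--Amp\`ere mass is not preserved and $X$ carries no global strictly plurisubharmonic potential, so one cannot simply globalize the comparison principle, and it is precisely the strict inequality $c<1$ that produces the margin needed to close the argument; a related, more routine, point is setting up the orthogonality relation for $\omega$-psh envelopes.
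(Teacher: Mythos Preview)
Your argument has a genuine gap at the final step. After establishing $(\omega+dd^cP)^n\leq(\omega+dd^cv)^n$ on all of $X$, you invoke the local comparison principle (Proposition~\ref{prop: CP}) on coordinate balls to conclude that $P-v$ satisfies a weak minimum principle on each ball, and then assert that a finite cover forces $P-v$ to be constant. This inference fails: the weak minimum principle on balls does not imply constancy on a compact manifold. For instance, $h(x)=\sin^2(\pi x_1)$ on the torus $\mathbb{T}^n$ with $n\geq 2$ satisfies $h(x)\geq\min_{\partial B}h$ for every $x$ in every small ball $B$ (because for $n\geq 2$ the sphere $\partial B(a,r)$ realizes every value of $y_1$ in $[a_1-r,a_1+r]$), yet $h$ is not constant. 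So the passage ``minimum principle on balls $\Rightarrow$ constant'' is invalid, and with it your route to the contradiction $(\omega+dd^cv)^n\equiv 0$. Incidentally, the fact that $(\omega+dd^cv)^n\not\equiv 0$ for bounded $v$ is itself derived in the paper \emph{from} the domination principle (Corollary~\ref{cor: dom princ}), so invoking it here would be circular.

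Stepping back, what your envelope construction actually achieves is that the hypothesis on $(u,v)$ implies the \emph{same} hypothesis on $(P,v)$, with the extra information $P\leq v$; but this is just another instance of the statement to be proved. The paper's proof is organized differently: it introduces a parameter $a$ with $c<a^n<1$ and aims at $u\geq av$. The margin $1-a$ is then spent to absorb a \emph{local} strictly psh function $\rho$ into the competitor $\varphi=av-(1-a)\rho$ on a small ball around an almost-minimizing point of $u-av$; applying Proposition~\ref{prop: CP} there yields the strict contradiction $\rho(x_0)\geq 0$. It is precisely this local injection of strict plurisubharmonicity---made possible by $c<1$---that substitutes for the unavailable global strictly psh potential; your approach does not exploit the strict inequality $c<1$ in any comparable way once you pass to the global inequality $(\omega+dd^cP)^n\leq(\omega+dd^cv)^n$.
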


\begin{proof}
  Without loss of generality, we assume $v\geq 1$. We fix a constant $a$ such that $c<a^n<1$, and we prove that $u\geq av$; the result then follows by letting $a\to 1$. Assume it is not the case and let $(x_j)$ be a sequence converging to $x_0\in X$ with 
  \[
  \lim_{j\to +\infty} (u(x_j) - av(x_j)) = m_a:= \inf_X (u-av) <0. 
  \]
  Let $B$ be a small neighborhood of $x_0$ and fix a smooth  strictly psh function $\rho$ such that   $\rho=0$ on $\partial B$, $|\rho|\leq v$, and  $dd^c \rho \leq \omega$. One can take a holomorphic coordinate chart around $x_0$ which is biholomorphic to the unit ball and $\rho(x) =b(|x|^2-1)$, for a small constant $b$.  Observe that $\rho<0$ in $B$. Consider now $\varphi=av- (1-a) \rho$. A direct computation then shows that  $\omega+dd^c \varphi\geq 0$ in $B$. Also, by construction $\varphi \leq v$ (because $-\rho \leq v$) and so $\{u<\varphi\}\cap B \subset \{u<av-(1-a)\rho \}\subset \{u<v\}$. Hence by assumption and using the fact that $\omega+dd^c (-\rho) \geq 0$, we get
  \[
  \idd_{\{u<\varphi\}}(\omega+dd^c u)^n\leq    c\idd_{\{u<\varphi\}}(\omega+dd^c v)^n \leq a^n\idd_{\{u<\varphi\}}(\omega+dd^c v)^n \leq \idd_{\{u<\varphi\}}(\omega+dd^c \varphi)^n.
  \]
From this and the maximum principle (Corollary \ref{cor: max principle}) we infer 
   \[
(\omega+dd^c u)^n\leq (\omega+dd^c  \max(u,\varphi))^n\; \; \text{in}\; B.
  \]
  It thus follows from the comparison principle, Proposition \ref{prop: CP}, that 
 \[
 \inf_B(u- \max(u,\varphi)) \geq \liminf_{x\to \partial B} (u-\max(u,\varphi))(x). 
 \]
 Evaluating this inequality at $x_j$ and letting $j\to +\infty$, we arrive at 
 \begin{equation}\label{contra}
 \lim_{j\to +\infty} \min(u(x_j) -av(x_j) +(1-a)\rho(x_j),0) \geq \liminf_{x\to \partial B} \min(u(x)-av(x),0). 
 \end{equation}
 Since $\liminf_{x\to \partial B} (u(x)-av(x))\geq m_a$, and $m_a<0$, we infer that
 \[
 \liminf_{x\to \partial B} \min(u(x)-av(x),0)\geq m_a. 
 \]
Using \eqref{contra}, we conclude that $\rho(x_0) \geq 0$, that is a contradiction. 
\end{proof}

The usual comparison principle in the K\"ahler setting states that 
\begin{equation}\label{eq: usual CP}
\int_{\{u<v\}} (\omega+dd^c v)^n \leq \int_{\{u<v\}} (\omega+dd^c u)^n, 	\; u,v \in \PSH(X,\omega) \cap L^{\infty}
\end{equation}
Taking $u=-C$ and then $v=C$, for sufficiently large $C$, we see that the validity of \eqref{eq: usual CP} implies the invariance of the Monge-Amp\`ere mass 
\begin{equation}
	\label{eq: invariance mass}
	\int_X(\omega+dd^c u)^n = \int_X \omega^n, \; u\in \PSH(X,\omega)\cap L^{\infty}. 
\end{equation} 
As shown in \cite{Chi24}, the validity of \eqref{eq: invariance mass} is equivalent to the Guan-Li condition $dd^c \omega=0$ and $dd^c (\omega^2)=0$. Conversely, the validity of \eqref{eq: invariance mass} implies that of \eqref{eq: usual CP} as we now explain. 
By the maximum principle we have 
\begin{flalign*}
	{\bf 1}_{\{u\geq v\}}(\omega+dd^c \max(u,v))^n & \geq {\bf 1}_{\{u\geq v\}}(\omega+dd^c u)^n\\
	{\bf 1}_{\{u<v\}}(\omega+dd^c \max(u,v))^n &= {\bf 1}_{\{u<v\}}(\omega+dd^c v)^n.
\end{flalign*}
Integrating over $X$ and summing up we obtain 
\begin{flalign*}
	\int_X (\omega+dd^c \max(u,v))^n &\geq \int_X (\omega+dd^c u)^n \\
	&+\int_{\{u<v\}}(\omega+dd^c v)^n-\int_{\{u<v\}}(\omega+dd^c u)^n.
\end{flalign*}
Using $\int_X (\omega+dd^c \max(u,v))^n = \int_X (\omega+dd^c u)^n$, we arrive at \eqref{eq: usual CP}. From \eqref{eq: usual CP} we deduce the domination principle as follows. Without loss of generality, we assume $v\geq 1$. We fix a constant $a$ such that $c<a^n<1$, and we prove that $u\geq av$; the result then follows by letting $a\to 1$. By the comparison principle \eqref{eq: usual CP} we have 
\begin{flalign*}
	\int_{\{u<av\}} (\omega+dd^c av)^n &\leq \int_{\{u<av\}} (\omega+dd^c u)^n\\
	&\leq c \int_{\{u<av\}} (\omega+dd^c v)^n \\
	&\leq ca^{-n}\int_{\{u<av\}} (\omega+dd^c av)^n.
\end{flalign*}
Since $ca^{-n}<1$, we infer that all the terms above are zero. Expanding the last term we obtain 
\[
0 = \int_{\{u<av\}} ((1-a) \omega + a(\omega+dd^c v))^n \geq \int_{\{u<av\}} (1-a)^n \omega^n, 
\]
hence the set $\{u<av\}$ has zero Lebesgue measure, and it must be empty because $u$ and $v$ are quasi-psh. 

In the setting of Theorem \ref{thm: domination principle}, we do not assume $d\omega=0$ and the above proof via the usual comparison principle does not work.

As a direct consequence of the domination principle we have 
\begin{corollary}\label{cor:MAconstant a}
    Assume $u,v$ are bounded $\omega$-psh functions on $X$ such that 
    $$
    (\omega+dd^c u)^n \leq c (\omega+dd^c v)^n, 
    $$
    for some positive constant $c$. Then $c\geq 1$. 
\end{corollary}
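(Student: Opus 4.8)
The plan is to argue by contradiction, assuming $c<1$, and to exploit the trivial fact that $dd^c$ kills constants together with the domination principle (Theorem \ref{thm: domination principle}).

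First I would note that for every real number $t$ the function $v+t$ is again bounded and $\omega$-psh, and $(\omega+dd^c(v+t))^n=(\omega+dd^c v)^n$ as Borel measures on $X$. Hence the hypothesis $(\omega+dd^c u)^n\leq c\,(\omega+dd^c v)^n$ rewrites as $(\omega+dd^c u)^n\leq c\,(\omega+dd^c(v+t))^n$ for every $t$, and in particular
\[
\idd_{\{u<v+t\}}(\omega+dd^c u)^n\leq c\,\idd_{\{u<v+t\}}(\omega+dd^c(v+t))^n .
\]
Since $c>0$, if $c\geq 1$ there is nothing to prove; so I may assume $0\leq c<1$, which is exactly the range in which Theorem \ref{thm: domination principle} applies.

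Next I would apply the domination principle to the pair $(u,v+t)$ for each fixed $t\in\mathbb{R}$, obtaining $u\geq v+t$ on all of $X$. Letting $t\to+\infty$ contradicts the boundedness of $u$ (and of $v$). Therefore the assumption $c<1$ is impossible, i.e. $c\geq 1$.

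There is no real obstacle here; the only point requiring a word of care is that the domination principle is stated for constants $0\leq c<1$, so the argument proceeds by first disposing of the trivial case $c\geq 1$ and then working in the regime where the hypothesis of Theorem \ref{thm: domination principle} is met. The use of the shifts $v+t$ is what converts the single inequality $u\geq v$ (which the domination principle gives directly) into the sharp conclusion on $c$.
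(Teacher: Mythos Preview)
Your proof is correct and follows essentially the same approach as the paper: assume $c<1$, apply the domination principle (Theorem \ref{thm: domination principle}) to $u$ and $v+t$ for arbitrary $t>0$ to obtain $u\geq v+t$, and derive a contradiction. Your write-up is slightly more explicit in verifying that the restricted inequality $\idd_{\{u<v+t\}}(\omega+dd^c u)^n\leq c\,\idd_{\{u<v+t\}}(\omega+dd^c(v+t))^n$ holds, which is indeed the precise hypothesis of Theorem \ref{thm: domination principle}.
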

\begin{proof}
By contradiction assume $c<1$. Theorem \ref{thm: domination principle} (applied to $u$ and $v+C$, $C>0$) ensures that $u\geq v+C$, for any $C>0$, that is not possible.
\end{proof}

\begin{corollary}\label{cor:MAexp a}
    Assume $u,v$ are bounded $\omega$-psh functions on $X$ such that 
    $$
    (\omega+dd^c u)^n \leq e^{\lambda(u-v)} (\omega+dd^c v)^n, 
    $$
    for some positive constant $\lambda>0$. Then $u\geq v$. 
\end{corollary}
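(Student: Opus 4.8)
The plan is to deduce this from the domination principle, Theorem \ref{thm: domination principle}, after a small shift of $v$. The obstruction to applying that theorem directly is that on the set $\{u<v\}$ the factor $e^{\lambda(u-v)}$ is only $<1$ pointwise and tends to $1$ as one approaches the contact set $\{u=v\}$, so it is not bounded above by any constant $c<1$ on all of $\{u<v\}$. This is exactly where the positivity $\lambda>0$ will be used: shifting $v$ down by a fixed amount forces the exponential below a constant strictly less than $1$.

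Concretely, I would fix $\varepsilon>0$ and work on the Borel set $\{u<v-\varepsilon\}$. Both $u$ and $v$ are bounded, so $u-v$ is a bounded Borel function and this set is well-defined. On $\{u<v-\varepsilon\}$ we have $u-v<-\varepsilon$, hence $e^{\lambda(u-v)}<e^{-\lambda\varepsilon}=:c<1$. Using the hypothesis and the obvious identity $(\omega+dd^c v)^n=(\omega+dd^c(v-\varepsilon))^n$, I get
\[
\idd_{\{u<v-\varepsilon\}}(\omega+dd^c u)^n \leq \idd_{\{u<v-\varepsilon\}} e^{\lambda(u-v)}(\omega+dd^c v)^n \leq c\,\idd_{\{u<v-\varepsilon\}}(\omega+dd^c(v-\varepsilon))^n.
\]
This is precisely the assumption of Theorem \ref{thm: domination principle} for the pair $(u,\,v-\varepsilon)$ with constant $c\in[0,1)$, so that theorem yields $u\geq v-\varepsilon$ on $X$.

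Finally I would let $\varepsilon\to 0^+$ to conclude $u\geq v$. There is essentially no hard step here: the whole content is the reduction, and the only thing to be careful about is not trying to invoke the domination principle with $v$ in place of $v-\varepsilon$, which would fail for the reason explained above.
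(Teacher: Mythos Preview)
Your proof is correct and essentially identical to the paper's: both shift $v$ down by a small constant $\varepsilon>0$ (the paper calls it $a$), use $e^{\lambda(u-v)}\leq e^{-\lambda\varepsilon}<1$ on $\{u<v-\varepsilon\}$ to invoke Theorem~\ref{thm: domination principle}, and then let the constant tend to $0$.
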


\begin{proof} Let $a>0$. By assumption 
$$\idd_{\{u<v-a\}}  (\omega+dd^c u)^n \leq \idd_{\{u<v-a\}}  e^{\lambda(u-v)} (\omega+dd^c v)^n \leq \idd_{\{u<v-a\}}  e^{-a\lambda } (\omega+dd^c v)^n. $$
Theorem \ref{thm: domination principle} ensures that $u\geq v-a$, for any $a>0$. The conclusion follows letting $a\rightarrow 0$.
\end{proof}

\begin{corollary}\label{cor: dom princ}
    If $u\in \PSH(X,\omega)\cap L^{\infty}(X)$, then $\int_X (\omega+dd^c u)^n>0$. 
\end{corollary}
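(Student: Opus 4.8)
The plan is to argue by contradiction and reduce the statement directly to Corollary~\ref{cor:MAconstant a}. Suppose $\int_X(\omega+dd^c u)^n=0$. Since $u$ is bounded and $\omega$-psh, the current $(\omega+dd^c u)^n$ is a positive Borel measure on $X$ (by the positivity statement established above for $\theta_u^n$); a positive measure with zero total mass is the zero measure, so $(\omega+dd^c u)^n\equiv 0$ on $X$.

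Next I would compare $u$ with the simplest available admissible function, namely the constant function $0$: it lies in $\PSH(X,\omega)\cap L^\infty(X)$ because $\omega+dd^c 0=\omega\geq 0$, and $(\omega+dd^c 0)^n=\omega^n$ is a volume form, in particular a nonzero positive measure. From $(\omega+dd^c u)^n\equiv 0$ one then trivially has
\[
(\omega+dd^c u)^n\leq \tfrac12\,(\omega+dd^c 0)^n
\]
on $X$. Applying Corollary~\ref{cor:MAconstant a} with $v=0$ and $c=\tfrac12$ forces $\tfrac12\geq 1$, a contradiction; hence $\int_X(\omega+dd^c u)^n>0$.

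Alternatively, one may invoke Theorem~\ref{thm: domination principle} directly: if $(\omega+dd^c u)^n\equiv 0$, then for $v=u+C$ with any $C>0$ the hypothesis $\idd_{\{u<v\}}(\omega+dd^c u)^n\leq c\,\idd_{\{u<v\}}(\omega+dd^c v)^n$ holds with $c=0<1$, so the domination principle yields $u\geq u+C$, which is impossible. Either way there is no real obstacle here; the only points requiring (minor) care are that $(\omega+dd^c u)^n$ is a genuinely well-defined positive measure — so that vanishing total mass means the zero measure — and that the comparison function employed is an admissible bounded $\omega$-psh function, both of which are immediate from the material developed above.
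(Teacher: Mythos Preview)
Your proof is correct and essentially matches the paper's: the paper argues by contradiction, notes that $(\omega+dd^c u)^n\equiv 0$, and then applies the domination principle (Theorem~\ref{thm: domination principle}) with $v$ equal to an arbitrary constant $C$ to conclude $u\geq C$ for all $C$, which is exactly your second alternative. Your first route via Corollary~\ref{cor:MAconstant a} is an equally valid shortcut, since that corollary is itself an immediate consequence of the same domination principle.
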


\begin{proof}
If $\int_X(\omega+dd^c u)^n=0$ then $(\omega+dd^c u)^n=0$ and the domination principle, applied for any $v\in \PSH(X,\omega)\cap L^{\infty}$, gives $u\geq v$. In particular, $u\geq C$ for all constant $C$. This is a contradiction.  
\end{proof}



\section{$L^{\infty}$-estimate in an Euclidean domain}
We let $\mathcal T$ denote the set of bounded psh functions in $\Omega$ vanishing on the boundary, and $\mathcal T_0$ those that have Monge-Ampère mass less than $1$: 
\[
\mathcal T_0:= \left\{u\in \PSH(\Omega), \; u =0 \; \text{on}\; \partial \Omega,\; \int_{\Omega}(dd^c u)^n \leq 1\right\}. 
\] 

\begin{theorem}\label{thm: C0 estimate}
	Assume $\varphi$ is a bounded psh function in $\Omega$ such that $\varphi=0$ on $\partial \Omega$, $(dd^c \varphi)^n =\mu$, where $\mu$ is a positive measure satisfying $\mu(\Omega)\leq 1$ and 
	\[
	A_m(\mu):= \sup\left \{\int_{\Omega} |u|^m d\mu\; :\; u\in \mathcal{T}_0 \right\}<+\infty,
	\]
	for some $m>n$. 
	Then 
	\[
	|\varphi| \leq C, 
	\] 
	where $C$ is a uniform constant depending on $A_m(\mu)$, $n,m$. 
\end{theorem}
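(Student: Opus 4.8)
The plan is to control the Monge--Amp\`ere capacity of the superlevel sets $U_s:=\{\varphi<-s\}$ and then to run a De Giorgi--Ko{\l}odziej type iteration on the function $b(s):=\capa(U_s)$. Observe first that, since $\varphi$ is plurisubharmonic and tends to $0$ at $\partial\Omega$, for every $s>0$ the set $U_s$ is open and relatively compact in $\Omega$; this is precisely what will let us invoke the comparison principle and the envelope constructions of Section~2 without producing boundary terms. The function $b$ is nonincreasing, and the goal is to show $b(S)=0$ for some explicit $S=S(A_m(\mu),n,m)$: once this is established, $U_S$ is an open set of zero capacity, hence empty by Josefson's theorem, so that $\varphi\ge -S$, and together with $\varphi\le 0$ this gives $|\varphi|\le S=:C$.

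The first ingredient is the inequality
\[
t^n\,\capa(U_{s+t})\ \le\ \mu(U_s)\ =\ \int_{U_s}(dd^c\varphi)^n,\qquad s>0,\ t>0.
\]
To prove it, fix $u\in\PSH(\Omega)$ with $-1\le u\le 0$ and set $\psi:=-s+tu$. Then $U_{s+t}\subset\{\varphi<\psi\}\subset U_s$, and $\{\varphi<\psi\}\Subset\Omega$ since $\psi\le -s<0$ while $\varphi\to 0$ at $\partial\Omega$. Applying the maximum principle (Corollary~\ref{cor: max principle}) to $w:=\max(\varphi,\psi)$, together with the invariance of the total Monge--Amp\`ere mass under modifications compactly supported in $\Omega$, gives the localized comparison principle $\int_{\{\varphi<\psi\}}(dd^c\psi)^n\le\int_{\{\varphi<\psi\}}(dd^c\varphi)^n$. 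As $(dd^c\psi)^n=t^n(dd^cu)^n$ and $\{\varphi<\psi\}\subset U_s$, this yields $t^n\int_{U_{s+t}}(dd^cu)^n\le\mu(U_s)$, and taking the supremum over all such $u$ proves the claim.

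The second ingredient turns the hypothesis $A_m(\mu)<+\infty$ into a domination of $\mu$ by a power of capacity: for every $s>0$,
\[
\mu(U_s)\ \le\ A_m(\mu)\,\capa(U_s)^{m/n}.
\]
We may assume $\capa(U_s)>0$, otherwise $U_s$ is empty. Let $h_s:=P(-\idd_{U_s})$. Directly from the definition of the psh envelope, $-1\le h_s\le 0$ and $h_s\equiv -1$ on the open set $U_s$; moreover $\tfrac1s\varphi$ is a competitor in the supremum defining $h_s$ (it is $\le 0$, tends to $0$ at $\partial\Omega$, and is $<-1$ on $U_s$), so $\tfrac1s\varphi\le h_s\le 0$ and hence $h_s\to 0$ at $\partial\Omega$; finally $\int_\Omega(dd^ch_s)^n=\capa(U_s)$ by the standard identity for the relative capacity (see \cite{GZbook}). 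Therefore $\capa(U_s)^{-1/n}h_s\in\mathcal{T}_0$, and feeding it into the definition of $A_m(\mu)$,
\[
A_m(\mu)\ \ge\ \int_\Omega\big|\capa(U_s)^{-1/n}h_s\big|^m d\mu\ \ge\ \capa(U_s)^{-m/n}\int_{U_s}|h_s|^m d\mu\ =\ \capa(U_s)^{-m/n}\mu(U_s),
\]
which is the asserted inequality.

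Combining the two ingredients gives the self-improving recursion
\[
b(s+t)\ \le\ \frac{A_m(\mu)}{t^n}\,b(s)^{1+\delta},\qquad \delta:=\frac{m}{n}-1>0,\quad s>0,\ t>0,
\]
while the first ingredient at $s=t=1$, combined with $\mu(\Omega)\le 1$, gives the starting bound $b(2)\le\mu(\Omega)\le 1$, which does not depend on $\Omega$. A standard geometric iteration now finishes the argument: setting $s_k:=2+S(1-2^{-k})$, $t_k:=s_{k+1}-s_k=S\,2^{-(k+1)}$ and $\eta:=2^{-n/\delta}<1$, one checks by induction that $b(s_k)\le b(2)\,\eta^k$ provided $S^n\ge A_m(\mu)\,2^{n(1+1/\delta)}$; letting $k\to+\infty$ and using that $b$ is nonincreasing gives $b(2+S)=0$. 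Hence $\varphi\ge -(2+S)$, so $|\varphi|\le C$ with $C:=2+\big(A_m(\mu)\big)^{1/n}2^{1+1/\delta}$, which depends only on $A_m(\mu)$, $n$ and $m$. I expect the main obstacle to be the second ingredient: the fact that the exponent $m/n$ is strictly larger than $1$ --- equivalently the hypothesis $m>n$ --- is exactly what makes the recursion contracting, and extracting that exponent is where one must compare $\mu$ with the correctly rescaled relative extremal function; apart from that, the only delicate point is the constant bookkeeping that every auxiliary function vanish on $\partial\Omega$, which is why $\varphi=0$ on $\partial\Omega$ (hence $U_s\Subset\Omega$ for $s>0$) enters repeatedly.
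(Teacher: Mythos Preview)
Your argument is correct and complete: the two ingredients (the localized comparison principle giving $t^n\capa(U_{s+t})\le\mu(U_s)$, and the bound $\mu(U_s)\le A_m(\mu)\capa(U_s)^{m/n}$ obtained by feeding the rescaled relative extremal function $\capa(U_s)^{-1/n}h_s$ into the definition of $A_m(\mu)$) combine into the superadditive recursion whose iteration gives an explicit bound $|\varphi|\le 2+(A_m(\mu))^{1/n}2^{1+1/\delta}$.

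However, your route is genuinely different from the paper's. You run the classical Ko{\l}odziej capacity iteration: the key object is the function $s\mapsto\capa(\{\varphi<-s\})$, and the hypothesis $A_m(\mu)<+\infty$ enters through the relative extremal function of the sublevel sets. The paper instead follows the Guedj--Lu envelope method: it builds a single auxiliary weight $\chi$ (through an explicit antiderivative involving $\mu(\varphi<-t)$), forms the envelope $u=P(-\chi(-\varphi))$, shows via the orthogonal relation (Theorem~\ref{thm: orthog relation}) and the maximum principle that $(dd^cu)^n\le g(-\varphi)\mu$ has total mass $\le 3$, and then deduces a differential inequality $\chi(t)^{m+1}\le CT_0^{3/2}\chi'(t)^{n+1}$ which integrates to bound $T_0$. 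Your approach is more elementary and yields slightly sharper explicit constants, but it relies on the identity $\int_\Omega(dd^ch_{U}^*)^n=\capa(U)$ from classical capacity theory, which the paper never develops. The paper's approach, by contrast, stays entirely within the envelope framework set up in Section~2.6 and is chosen precisely because it transports verbatim to the Hermitian and degenerate settings of Sections~4--6, where the usual comparison principle and the relative-extremal-function machinery are unavailable.
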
 

\begin{proof}
In the proof below,  to simplify the notation, we use $C$ to denote various different uniform positive constants. We define 
\[
T:= \sup \{t\geq 0 \; : \; \mu(\varphi<-t) >0\}. 
\]
If $t>T$ then $\mu (\varphi<-t)= (dd^c \varphi)^n(\varphi<-t)=0$ and the domination principle (Theorem \ref{thm: domination principle}) gives $\varphi\geq -t$, hence $\varphi\geq -T$ in $\Omega$. 

We fix $T_0\in (0,T)$, and define $g:[0,+\infty) \rightarrow [0,+\infty)$ by $g(0)=1$ and 
\[
g'(t):= \begin{cases}
		\frac{1}{(1+t)^{3/2}\mu(\varphi<-t)}, \; t\leq T_0,\\
		\frac{1}{(1+t)^{3/2}}, \; t>T_0. 
		\end{cases}
\]
Then $g$ is an increasing function which is Lipschitz. 
We next define $\chi: [0,+\infty) \rightarrow [0,+\infty)$ by $\chi(0)=0$ and $\chi'(t) = g(t)^{1/n}$. Then  $\chi$ is a $C^1$, convex, increasing function with $\chi(0)=0$, $\chi'(0)=1$. In particular (assuming we can compute the second derivative) $g'(t)=n\chi''(t)\chi'(t)^{(n-1)} $.
Observe that $\chi(1)=\chi(0)+ \chi'(s)$, for some $s\in [0,1]$, which yields $\chi(1)\geq 1$ since $\chi'(s)\geq \chi'(0)=1$.

Let $u:= P(-\chi(-\varphi))$ be the largest psh function in $\Omega$ lying below $-\chi(-\varphi)$. Again using the intermediate value theorem, we infer that for each $t>0$, there exists some $0<s<t$ such that $\chi(t)-\chi(0)=\chi(t)=t\chi'(s)$, and the latter term is less that $t\chi'(t)$ because $\chi'$ is increasing. 

Observe that by the maximum principle, $\varphi \leq 0$, hence for $t=-\varphi(x)$ we have $\chi(-\varphi)\leq (-\varphi)\chi'(M)$, where $M=\sup_{\Omega}|\varphi|$. It follows that $-\chi(-\varphi)\geq \chi'(M) \varphi$, hence $u=P(-\chi(-\varphi))\geq \chi'(M)\varphi$. Consequently $u$ is a bounded psh function with zero boundary data by construction (since $\chi(0)=0$), i.e. $u\in \mathcal T$. 

We next estimate the total mass of $(dd^c u)^n$.  Let $\gamma: [0,+\infty) \rightarrow [0,+\infty)$ be the inverse of $\chi$. In particular $\gamma$ is an increasing function. A direct computation (and an approximation argument needed to compute the second derivatives of $\gamma$) shows that 
\[
dd^c (-\gamma(-u)) = \gamma'(-u) dd^c u - \gamma''(-u) du \wedge d^c u \geq \gamma'(-u) dd^c u,
\] 
where in the last inequality we used that $\gamma$ is concave.
Hence $-\gamma(-u)$ is psh in $\Omega$ and $(dd^c -\gamma(-u))^n \geq \gamma'(u)^n (dd^c u)^n$ in $\Omega$. By the orthogonal relation Theorem \ref{thm: orthog relation} ($-\chi(-\varphi)$ is quasi continuous) we have 
\[
(dd^c u)^n = {\bf 1}_{\{u=-\chi(-\varphi)\}} (dd^c u)^n.
\]
We also have $u\leq -\chi(-\varphi)$, hence $-\gamma(-u) \leq \varphi$ with equality on the contact set $D:=\{u=-\chi(-\varphi)\}$. The maximum principle (Corollary \ref{cor: max principle}) thus gives 
\[
 {\bf 1}_{D} (dd^c (-\gamma(-u)))^n \leq  {\bf 1}_{D} (dd^c \varphi)^n. 
\]
Using that on $\{u=-\chi(-\varphi) \}$ we have $(\gamma'(-u))^{-1}=\chi'(-\varphi)$, we then get 
\[
(dd^c u)^n  \leq  {\bf 1}_{D} (\gamma'(-u))^{-n} dd^c (-\gamma(-u))^n  \leq  {\bf 1}_{D} (\chi'(-\varphi))^n (dd^c \varphi)^n \leq g(-\varphi) \mu. 
\]
By Lebesgue integral formula (using the variable change $t=g(s))$ and the definition of $g$ we arrive at
\begin{flalign*}
	\int_{\Omega} g(-\varphi) d\mu &= \int_0^{+\infty} \mu(g(-\varphi)>t)dt \\
	&=\int_0^{g(0)} \mu(g(-\varphi)>t)dt + \int_{g(0)}^{+\infty} \mu(g(-\varphi)>t)dt \\
	& \leq  g(0)\mu(\Omega) + \int_0^{+\infty} \mu(\varphi<-t) g'(t)dt\\
	&\leq \mu(\Omega) + \int_0^{+\infty} (1+t)^{-3/2}dt \leq 3.
\end{flalign*}
We then infer that $3^{-1/n} u\in \mathcal T_0$. 
 By the assumption on $\mu$, it thus follows that $\int_{\Omega}|u|^m d\mu \leq 3^{m/n}A_m (\mu)\leq C$. Hence by Chebyshev's inequality and the fact that $\{ -\chi(-\varphi) <-\chi(t)\}\subseteq \{ u<-\chi(t)\}$, we obtain 
	\[
	\mu(\varphi<-t) \chi(t)^m \leq \mu( \{ u<-\chi(t)\}) \chi(t)^m \leq C,
	\]
	which gives, for $0<t<T_0$, 
	\[
	\chi(t)^m \leq C g'(t)(1+t)^{3/2}\leq  CT_0^{3/2}\chi''(t) \chi'(t)^{n-1}.
	\]
	Multiplying with $\chi'(t)$ and integrating on $[0,t]$, we obtain 
	\[
	\chi(t)^{m+1} \leq CT_0^{3/2}\chi'(t)^{n+1}. 
	\]
	 We then have 
	 \begin{equation}\label{ineq. bound}
	 1\leq C T_0^{\beta} \frac{\chi'(t)}{\chi(t)^{\alpha}}, 
\end{equation}
	 with $\alpha = (1+m)/(1+n)>1$ and $\beta=3/(2(n+1))<1$. Note that since $\chi(1)\geq 1$
  $$\int_1^{T_0} \frac{\chi'(t)}{\chi(t)^{\alpha}} \, dt= \frac{1}{\alpha-1}(\chi(1)^{1-\alpha} - \chi(T_0)^{1-\alpha} ) \leq \frac{1}{\alpha-1}\chi(1)^{1-\alpha} \leq  \frac{1}{\alpha-1}.$$
 Hence integrating on $[1,T_0]$ the inequality in \eqref{ineq. bound}, we obtain $T_0\leq CT_0^{\beta}$, i.e. $T_0^{1-\beta}\leq C$. Since this is true for all $T_0<T$, we obtain a uniform bound for $T$, hence for $\sup_{\Omega}|\varphi|$. 
\end{proof}


We next give examples of measures $\mu$ for which $A_m(\mu)$ is finite. 
\begin{lemma}\label{lem: compact support}
	Assume $\mu=fdV$ with $f\in L^p(\Omega)$, for some $p>1$, and $f$ has compact support in $\Omega$. Then $A_m(\mu)<+\infty$ for all $m> 0$. 
\end{lemma}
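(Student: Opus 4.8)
The plan is to bound $\int_\Omega(-u)^m\,d\mu$ by a constant independent of $u\in\mathcal T_0$ (note that $u\le 0$, since $u=0$ on $\partial\Omega$, so $|u|^m=(-u)^m$). Setting $K=\operatorname{supp}f\Subset\Omega$ and $q=p/(p-1)$, Hölder's inequality reduces this to a uniform bound $\int_K(-u)^r\,dV\le C(r)$ for every fixed $r>0$, i.e.\ to a uniform integrability estimate for the family $\mathcal T_0$ against Lebesgue measure on the compact set $K$; it is in fact cleaner to keep working with $\mu=f\,dV$ directly and exploit $\operatorname{supp}\mu\subset K$.

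First I would establish the capacity estimate $\capa(\{u<-t\})\le t^{-n}$ for all $t>0$ and all $u\in\mathcal T_0$. Fix $t>0$ and $v\in\PSH(\Omega)$ with $-1\le v\le 0$. Since $u\to 0$ at $\partial\Omega$ and $tv\le 0$, we have $\liminf_{z\to\partial\Omega}(u(z)-tv(z))\ge 0$, so the comparison principle for $u$ and $tv$ — itself a consequence of the maximum principle (Corollary \ref{cor: max principle}), plurifine locality, and the comparison of total masses (Lemma \ref{lem: CP mass form}) applied via $\max(u,tv)$ — gives
$$\int_{\{u<tv\}}(dd^c(tv))^n\ \le\ \int_{\{u<tv\}}(dd^cu)^n\ \le\ \int_\Omega(dd^cu)^n\ \le\ 1.$$
As $tv\ge -t$ we have $\{u<-t\}\subseteq\{u<tv\}$, hence $t^n\int_{\{u<-t\}}(dd^cv)^n\le 1$; taking the supremum over such $v$ yields $\capa(\{u<-t\})\le t^{-n}$.

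The decisive step, and where the hypotheses $f\in L^p$ with $p>1$ and $\operatorname{supp}f\Subset\Omega$ enter (in the Hölder reduction and to apply things on a compact subset), is the classical volume--capacity comparison. By the Alexander--Taylor inequality (see \cite{GZbook}), for a compact $K\Subset\Omega$ one has $\mathrm{Vol}(E)\le C_{K,\Omega}\exp\!\big(-c_{K,\Omega}\,\capa(E)^{-1/n}\big)$ for all $E\subset K$, and combining this with Hölder's inequality shows that $\mu=f\,dV$ is dominated by capacity to any power: for every $\tau>0$ there is $C_\tau=C_\tau(n,p,\tau,\|f\|_{L^p},K,\Omega)$ with $\mu(E)\le C_\tau\,\capa(E)^\tau$ for every Borel $E\subset\Omega$. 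Feeding in the previous step gives $\mu(\{u<-t\})\le C_\tau\,t^{-n\tau}$, so by the layer--cake formula, choosing $\tau>m/n$,
$$\int_\Omega(-u)^m\,d\mu=\int_0^{+\infty}\mu(\{u<-t\})\,mt^{m-1}\,dt\ \le\ \mu(\Omega)+C_\tau\,m\!\int_1^{+\infty}t^{\,m-1-n\tau}\,dt\ <\ +\infty,$$
a bound independent of $u\in\mathcal T_0$; hence $A_m(\mu)<+\infty$.

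I expect the main obstacle to be exactly the volume--capacity step. The polynomial bound $\capa(\{u<-t\})\le t^{-n}$ alone, used with only the elementary inequality $\mathrm{Vol}(E)\le C\,\capa(E)$, yields $A_m(\mu)<\infty$ only in the range $m<n(p-1)/p$; it is the super-polynomial decay of Lebesgue measure in terms of capacity that is essential to reach all $m>0$. The remaining ingredients — the Hölder reduction, the capacity estimate on sublevel sets, and the layer--cake bookkeeping — are routine given the comparison-principle machinery already developed in Section~2.
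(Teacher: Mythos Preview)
Your proof is correct but follows a genuinely different route from the paper's. The paper argues by contradiction: if $\int_\Omega |u_j|^m f\,dV \to +\infty$ along some sequence in $\mathcal{T}_0$, then H\"older's inequality gives $\int_K |u_j|^{mq}\,dV \to +\infty$; Hartogs' lemma then forces $M_j := \sup_K u_j \to -\infty$, and comparing $|M_j|^{-1}u_j$ with the relative extremal function $h_K^*$ via the total-mass comparison (Lemma~\ref{lem: CP mass form}) yields $0 < \int_\Omega (dd^c h_K^*)^n \le |M_j|^{-n} \to 0$, a contradiction. This stays entirely within the elementary tools of Section~2 and the classical compactness properties of psh families.

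Your approach is instead direct and quantitative: the sublevel-set capacity bound $\capa(\{u<-t\})\le t^{-n}$, combined with the Alexander--Taylor volume--capacity inequality and a layer-cake argument, produces an explicit estimate for $A_m(\mu)$. The paper in fact alludes to essentially this route in the remark following Theorem~\ref{thm: Linfty estimate} (invoking \cite{ACKPZ09}), but deliberately opts for the contradiction argument as more self-contained. What your method buys is an explicit bound and immediate extension to any measure dominated by capacity; what the paper's buys is independence from the Alexander--Taylor comparison, a result not developed in these notes.
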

\begin{proof}
	Assume by contradiction that it is not the case. Then there exists a sequence $(u_j)$ in $\mathcal T_0$ such that $\int_{\Omega} |u_j|^m fdV\to +\infty$. Let $K$ be a non-pluripolar compact subset of $\Omega$ such that $f=0$ in $\Omega\setminus K$. By H\"older's inequality, we also have $\int_K |u_j|^{mq} dV \to +\infty$, where $q$ is the conjugate of $p$:  
	\[
	\int_K |u_j|^mfdV \leq  \|u_j\|_{mq}^m \|f\|_p. 
	\]
	By Hartogs' lemma, $u_j$ converges locally uniformly to $-\infty$, hence $M_j=\sup_K u_j \to -\infty$. It follows that, for $j$ big enough, $|M_j|^{-1}u_j\leq h_K^*$, where 
 \[
 h_K^*:= \big( \sup\{ v\quad  v\in \PSH(\Omega), \, v\leq 0 \; v|_K\leq -1\} \big)^*
 \]
 is the relative extremal function of $K$ in $\Omega$. Since $u_j=h_K^*=0$ on $\partial \Omega$, we can use the comparison principle of total masses Lemma \ref{lem: CP mass form} to deduce that $0<\int_{\Omega} (dd^c h_K^*)^n \leq \int_{\Omega} |M_j|^{-n} (dd^c u_j)^n \leq |M_j|^{-n}\to 0$, a contradiction. 
\end{proof}

\begin{theorem}\label{thm: Linfty estimate}
	Assume $(dd^c \varphi)^n=fdV$ with $0\leq f\in L^p(\Omega)$, $p>1$, and $\varphi\in \PSH(\Omega)$ with $\varphi|_{\partial \Omega}=0$. Then $|\varphi|\leq C$, for a uniform constant depending on $diam(\Omega)$, $p$, $\|f\|_p$. 
\end{theorem}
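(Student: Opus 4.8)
The plan is to reduce everything to the abstract $L^{\infty}$-estimate of Theorem~\ref{thm: C0 estimate}. That theorem asks two things of $\mu=(dd^c\varphi)^n=f\,dV$: that $\mu(\Omega)\le 1$, and that $A_m(\mu)<+\infty$ for some $m>n$. Lemma~\ref{lem: compact support} already gives the second property when $f$ is compactly supported, but here $f$ is only in $L^p(\Omega)$, so a short argument will be needed. For the mass normalisation, I would set $c:=\max\bigl(1,(\int_\Omega f\,dV)^{1/n}\bigr)$; by H\"older, $\int_\Omega f\,dV\le \|f\|_p\,\vol(\Omega)^{1-1/p}\le \|f\|_p\bigl(c_n({\rm diam}\,\Omega)^{2n}\bigr)^{1-1/p}$, so $c$ is controlled by $p$, $\|f\|_p$ and ${\rm diam}(\Omega)$. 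Then $\psi:=c^{-1}\varphi$ is a bounded psh function (we take for granted, as in Theorem~\ref{thm: C0 estimate}, that $\varphi$ is bounded) vanishing on $\partial\Omega$, with $(dd^c\psi)^n=c^{-n}f\,dV=:\mu_\psi$ and $\mu_\psi(\Omega)\le 1$, and a bound on $|\psi|$ yields $|\varphi|=c|\psi|$. Since $A_m(\mu_\psi)=c^{-n}A_m(\mu)\le A_m(\mu)$, it suffices to bound $A_m(\mu)$ in terms of $n,p,\|f\|_p$ and ${\rm diam}(\Omega)$.

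The key point is a uniform capacity estimate: for every $u\in\mathcal T_0$ and every $t>0$,
\[
\capa\bigl(\{u<-t\}\bigr)\le t^{-n}.
\]
This follows from the tools already established above. Given a competitor $w\in\PSH(\Omega)$ with $-1\le w\le 0$ and $\e>0$, the function $\max(u/t,\,w-\e)$ equals $u/t$ near $\partial\Omega$, hence has the same total Monge--Amp\`ere mass $t^{-n}\int_\Omega(dd^cu)^n\le t^{-n}$; the maximum principle (Corollary~\ref{cor: max principle}) gives $(dd^c\max(u/t,w-\e))^n\ge\idd_{\{u/t<w-\e\}}(dd^cw)^n$, so $\int_{\{u<-(1+\e)t\}}(dd^cw)^n\le t^{-n}$, and letting $\e\to0$ and taking the supremum over $w$ yields the claim. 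Next, by the classical comparison between the Monge--Amp\`ere capacity and the Lebesgue measure (Alexander--Taylor; see \cite{GZbook}), there are $C_1,\alpha>0$ depending only on $n$ and ${\rm diam}(\Omega)$ with $\vol(E)\le C_1\exp\bigl(-\alpha\,\capa(E)^{-1/n}\bigr)$ for all Borel $E\subset\Omega$. Combining the two, and using $\capa(\{u<-t\})^{-1/n}\ge t$, we obtain $\vol(\{u<-t\})\le C_1e^{-\alpha t}$ for all $u\in\mathcal T_0$.

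Now fix $m>n$ (say $m=n+1$) and let $q=p/(p-1)$. For $u\in\mathcal T_0$, H\"older together with the previous paragraph gives $\mu(\{u<-t\})=\int_{\{u<-t\}}f\,dV\le\|f\|_p\,\vol(\{u<-t\})^{1/q}\le \|f\|_p C_1^{1/q}\,e^{-(\alpha/q)t}$, whence
\[
\int_\Omega|u|^m\,d\mu=m\int_0^{+\infty}t^{m-1}\mu(\{u<-t\})\,dt\le m\int_0^{+\infty}t^{m-1}\min\bigl(\mu(\Omega),\ \|f\|_pC_1^{1/q}e^{-(\alpha/q)t}\bigr)\,dt=:C_2<+\infty,
\]
with $C_2$ depending only on $n,p,\|f\|_p,{\rm diam}(\Omega)$. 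Hence $A_m(\mu)\le C_2$, so $A_m(\mu_\psi)\le C_2$, and Theorem~\ref{thm: C0 estimate} applied to $\psi$ gives $|\psi|\le C(C_2,n,m)$; therefore $|\varphi|=c|\psi|\le C'$, a bound depending only on ${\rm diam}(\Omega)$, $p$ and $\|f\|_p$.

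The one genuinely substantive ingredient beyond Theorem~\ref{thm: C0 estimate} is the volume--capacity comparison. The bound $\capa(\{u<-t\})\le t^{-n}$ is a soft consequence of the maximum principle and the invariance of the Monge--Amp\`ere mass under compactly supported perturbations, but the exponential estimate $\vol(E)\le C_1\exp(-\alpha\,\capa(E)^{-1/n})$ --- equivalently, the statement that densities in $L^p$ with $p>1$ are dominated by a quantitative function of the capacity --- is the classical Alexander--Taylor/Ko\l odziej input, which is not developed in this excerpt and would have to be cited or proved separately; I expect this to be the only real obstacle. Everything else is a routine distribution-function computation.
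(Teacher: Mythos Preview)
Your argument is correct, but the paper takes a different and more self-contained route. Instead of verifying $A_m(\mu)<+\infty$ directly on $\Omega$, the paper enlarges the domain: it fixes a ball $B(x_0,2R)\supset\Omega$, extends $f$ by zero to $B(x_0,2R)$ (so the density becomes compactly supported), and solves the Dirichlet problem $(dd^c u)^n=\|f\|_p^{-1}f\,\idd_\Omega\,dV$ on $B(x_0,2R)$ with zero boundary data. Now Lemma~\ref{lem: compact support} applies verbatim on the larger ball, so Theorem~\ref{thm: C0 estimate} bounds $u$; finally the comparison principle on $\Omega$ (where $\varphi=0\geq u$ on $\partial\Omega$) gives $\varphi\geq \|f\|_p^{1/n}u\geq -C$. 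The only external input is solvability of the Dirichlet problem on a ball with $L^p$ right-hand side.

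Your approach establishes $A_m(\mu)<+\infty$ via the chain $\capa(\{u<-t\})\le t^{-n}$ (which you derive cleanly from the tools in the paper), then Alexander--Taylor to pass to volume, then H\"older to pass to $\mu$. This is valid, and the paper actually remarks just after the proof that this alternative route (citing \cite{ACKPZ09}) works. The trade-off is exactly the one you flag: you import a substantial external estimate (the exponential volume--capacity comparison) that is not developed in the notes, whereas the paper's enlargement trick stays entirely within the machinery already built plus the Dirichlet problem. The paper explicitly says it chose its version ``for pedagogical reasons''.
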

\begin{proof}
	Fix $R>0$ and $x_0\in \Omega$ such that $\Omega\subset B(x_0,R)$. We solve the Dirichlet problem $(dd^c u)^n =\frac{f}{\|f\|_{L^p}} \idd_{\Omega} dV$ in $B(x_0,2R)$ and $u=0$ on $\partial B(x_0,2R)$. By  Theorem \ref{thm: C0 estimate} and Lemma \ref{lem: compact support}, we have $|u|\leq C$, for a uniform constant $C$ (which depends on $\|f\|_{L^p}$). Now, by the maximum principle, $\varphi=0\geq u$ on $\partial \Omega$ and $(dd^c u)^n = (dd^c \varphi)^n$ in $\Omega$. The comparison principle (Proposition \ref{prop: CP}) gives $\varphi\geq u\geq -C$, yielding the result. 
\end{proof}

We observe that \cite[Theorem B]{ACKPZ09} and H\"older's inequality ensure that any positive measure $\mu=fdV$ with $L^p$ density, $p>1$, satisfies the assumption $A_{\mu}<+\infty$. The above result can then be obtained from Theorem \ref{thm: C0 estimate} and \cite[Theorem B]{ACKPZ09}. For pedagogical reasons we choose to present a more straightforward proof.

	

\section{A priori estimates on compact Hermitian manifolds}
We fix a hermitian form $\omega$ on a compact complex manifold $X$ of complex dimension $n$. We fix $p>1$ and we use $L^p(X)$ to denote the space $L^p(X,\omega^n)$. The $L^p$ norm of a measurable function $f$ is denoted by $\|f\|_p = \|f\|_{L^p(X)}$. 
\subsection{$L^{\infty}$-estimate}
The goal of this section is to prove an a priori $L^{\infty}$ estimate for solutions to Monge-Amp\`ere equations with right-hand side in $L^p$, for some $p>1$. We follow the strategy in \cite{GL23Crelle}, starting first with the local case. Having the bound for solutions in Euclidean balls we consider a double covering of the compact manifold $X$ and glue the solutions together to obtain a uniformly bounded subsolution. The domination pricniple then ensures a uniform bound for the solution. The main input here is the $L^{\infty}$ estimate for solutions in Euclidean balls.

\begin{lemma}\label{lem: subsol}
	Assume $0\leq g \in L^p(X)$ and $\|g\|_p\leq 1$. Then there exists $u\in \PSH(X,\omega)$ such that $-1\leq u\leq 0$, and $(\omega+dd^c u)^n \geq mgdV$ on $X$, where $m>0$ is a constant depending on $p$, $n$,  $\omega$. 
\end{lemma}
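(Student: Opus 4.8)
The plan is to construct the subsolution $u$ by gluing together local solutions of Dirichlet problems on small balls, exactly mirroring the structure of the proof of Theorem \ref{thm: Linfty estimate}. First, cover $X$ by finitely many coordinate charts $(\Omega_j)_{j=1}^N$, each biholomorphic to a Euclidean ball, chosen so that slightly shrunken charts $(\Omega_j')$ still cover $X$. On each $\Omega_j$ one has a smooth strictly psh local potential $\rho_j$ for $\omega$, i.e. $\omega = dd^c \rho_j$ on $\Omega_j$, with $\rho_j$ uniformly bounded. A $\theta$-psh (here $\omega$-psh) function $u$ corresponds locally to the psh function $\rho_j + u$, and $(\omega + dd^c u)^n = (dd^c(\rho_j + u))^n$; this is what lets us transfer the Euclidean machinery of Sections 2--3 to the manifold.

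Next, on each slightly larger ball $B_j \Supset \Omega_j'$ inside $\Omega_j$, solve the Dirichlet problem $(dd^c w_j)^n = c\, g\, \idd_{B_j}\, dV$ with $w_j = \rho_j$ on $\partial B_j$, where $c>0$ is a small constant to be fixed (absorbing $\|g\|_p\le 1$, the volume of $B_j$, and the comparison between $dV$ and $\omega^n$). By Theorem \ref{thm: Linfty estimate} (or Theorem \ref{thm: C0 estimate} together with Lemma \ref{lem: compact support}), $w_j - \rho_j$ is bounded by a uniform constant $C_0$ depending only on $p$, $n$, $\omega$, and the fixed cover; hence $u_j := w_j - \rho_j$ satisfies $\|u_j\|_{L^\infty(B_j)} \le C_0$ and $(\omega + dd^c u_j)^n = c\,g\,dV$ on $B_j$. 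Rescaling, we may arrange $-1 \le u_j + \text{(const)} \le 0$ after a harmless normalization, but the real point is to glue.

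To glue, observe that on the overlap we want to take a maximum, so we must control the boundary behaviour. The standard device: since $w_j$ solves the zero-Monge-Ampère-mass problem outside the support region near $\partial B_j$, and since on $\partial B_j$ one has $u_j = 0$ (in the $\rho_j$-normalization $w_j = \rho_j$), the function obtained by setting $u_j$ on $B_j$ and $0$ on $X \setminus B_j$ is $\omega$-psh on all of $X$ by property (6) of the Proposition on psh functions (gluing across the boundary where it matches a global $\omega$-psh function, here the constant $0$). Then define $u := \max_{1\le j\le N}(\,\widetilde u_j\,) + (\text{shift})$, where $\widetilde u_j$ is the global $\omega$-psh extension; this is $\omega$-psh, bounded (between $-C_1$ and $0$ for a uniform $C_1$), and by the maximum principle Lemma \ref{lem: max principle global} one gets $(\omega + dd^c u)^n \ge \idd_{\{u = \widetilde u_j\}}(\omega + dd^c \widetilde u_j)^n$ locally, so on each $\Omega_j'$ — where $u$ coincides with $\widetilde u_j$ on the region that matters because the shrunken balls cover $X$ — we obtain $(\omega+dd^cu)^n \ge c'\,g\,dV$. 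Finally rescale $u \mapsto u/C_1$ to land in $-1 \le u \le 0$, adjusting $m = c'/C_1^n$ accordingly.

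**Main obstacle.** The delicate point is the gluing: one must ensure that near each boundary $\partial B_j$ the locally-defined $\omega$-psh function agrees with a genuinely \emph{global} $\omega$-psh comparison function so that $\max$ produces something $\omega$-psh on $X$, and simultaneously that after taking the maximum the lower bound $(\omega+dd^c u)^n \ge m g\, dV$ survives at \emph{every} point of $X$ — which forces the shrunken balls $\Omega_j'$ to cover $X$ and forces us to track, via the maximum principle, that at a point $x \in \Omega_j'$ the maximum is realized (or can be arranged to be realized, after an appropriate constant shift making $\widetilde u_j$ dominate near the center of $B_j$) by the index $j$. Keeping all the constants — the shift needed to make each $\widetilde u_j$ win on $\Omega_j'$, the final rescaling to $[-1,0]$, and the resulting $m$ — uniform and dependent only on $(p, n, \omega)$ is the bookkeeping heart of the argument.
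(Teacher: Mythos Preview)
Your approach has a genuine gap at its very first step: you assume a local potential $\rho_j$ with $\omega = dd^c \rho_j$ on $\Omega_j$, but this exists only when $\omega$ is $d$-closed, i.e.\ in the K\"ahler case. The whole point of the paper (and of this lemma) is to work with a general Hermitian form, so no such $\rho_j$ is available and the identity $(\omega+dd^c u)^n = (dd^c(\rho_j+u))^n$ on which your transfer of the Euclidean estimates rests simply fails. The paper's proof sidesteps this completely: it solves the \emph{pure} Dirichlet problem $(dd^c v_j)^n = g\,\idd_{B_j'}\,dV$ in $B_j'$ (no $\omega$ involved), glues via $w_j=\max(v_j,\lambda_j\rho_j)$ with $\rho_j$ a global smooth function defining $B_j'$, and then observes that $dd^c w_j \ge -\delta^{-1}\omega$ globally. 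The global subsolution is the \emph{convex combination} $v=\frac{\delta}{N}\sum_j w_j$: in $B_j$ one has $(\omega+dd^c v)^n = \bigl(\tfrac{1}{N}\sum_l(\omega+\delta\,dd^c w_l)\bigr)^n \ge \tfrac{1}{N^n}(\omega+\delta\,dd^c w_j)^n \ge \tfrac{\delta^n}{N^n}(dd^c w_j)^n = \tfrac{\delta^n}{N^n}g\,dV$, using only that each $\omega+\delta\,dd^c w_l\ge 0$ and that $w_j$ is genuinely psh in $B_j$.

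Even setting the Hermitian issue aside, your $\max$-based gluing has a second gap. Each $\widetilde u_j$ equals $0$ on $X\setminus B_j$ and is $\le 0$ on $B_j$, so at a point $x\in \Omega_j'$ the maximum may well be realized by some $\widetilde u_k$ with $x\notin B_k$ (where $\widetilde u_k(x)=0$); the maximum principle then only yields $(\omega+dd^c u)^n \ge \omega^n$ at $x$, not $\ge c\,g\,dV$. You mention fixing this by shifting the $\widetilde u_j$ so that each dominates on its own $\Omega_j'$, but that is circular: the shift needed depends on how negative the other $\widetilde u_k$ are there, which in turn depends on their shifts. The paper's averaging trick avoids this difficulty entirely because the Monge--Amp\`ere lower bound comes from dropping all but one summand in a sum of nonnegative forms, with no need to identify which index ``wins'' at each point.
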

We stress that, as it will be clear from the proof, $m$ depends on the norm $\|g\|_p$, so if the norm is less than $1$ then the constant $m$ is uniform.

\begin{proof}
	 Consider a finite double cover of $X$ by small ``balls" $B_j,B_j'=\{\rho_j<0\}$, $j=1, \cdots, N$, with $B_j \subset \subset B_j'$
    which are bounded domains in a local holomorphic chart. Here $\rho_j: X \rightarrow \bR$ denotes a smooth function
  which is strictly plurisubharmonic in a neighborhood of $B_j'$.
    We solve $(dd^c v_j)^n=\mathbf{1}_{B_j'} gdV$ in $B_j' $ with $-1$ boundary values.
    It follows from Theorem \ref{thm: Linfty estimate}  that the plurisubharmonic solution $v_j$ is uniformly bounded in $B_j'$ by a constant which only depends on ${\it diam}(B_j')$ and $p$.
    
  We now consider $w_j=\max(v_j,\lambda_j \rho_j)$ and choose $\lambda_j >1$ so that we have a uniform bounded function $w_j$ with the following properties:
    \begin{itemize}
    \item $w_j$ coincides with $v_j$ in $B_j$ where it satisfies $ (dd^c w_j)^n = g dV$;
    \item $w_j$ is plurisubharmonic in $B_j'$ and uniformly bounded;
    \item $w_j$ coincides with $\lambda_j \rho_j$ in $X \setminus B_j'$ and in a neighborhood of $\partial B_j'$.
    \end{itemize}
    As  $w_j$ is smooth where it is not plurisubharmonic, the form 
    $dd^c w_j$ is bounded below by $-\delta^{-1} \omega$ for some uniform $\delta>0$. Note that such $\delta$ is uniform in $j$ since $X\setminus B_j'$ is compact.
    Thus $\delta w_j$ is $\omega$-psh. Consider the (uniformly) bounded $\omega$-psh function
    $
    v:=\frac{\delta}{N} \sum_{j=1}^N w_j
    $. In each $B_j$ we have
    $$
    (\omega+dd^c v)^n = \left(  \frac{1}{N}\sum_{l=1}^N (\omega+\delta dd^c w_l)\right)^n \geq \frac{1}{N^n} (\omega+dd^c \delta w_j)^n
    \geq \frac{\delta^n}{N^n} ( dd^c w_j)^n= \frac{\delta^n}{N^n} gdV
    $$
 where in the second inequality we used that $w_j$ is psh in $B_j$.
 
   The desired bounded subsolution can be given by $v/M$, where $M:= \max({\rm osc}_X v,1)$. 
\end{proof}

\begin{theorem}\label{Linfty est}
	Let $u\in \PSH(X,\omega)\cap L^{\infty}$ solve 
	\[
	(\omega+dd^c u)^n = f \omega^n,
	\]
	where $0\leq f\in L^p(X)$. Then $\int_X f \omega^n>0$ and ${\rm ocs}_X(u) \leq C$, where $C$ depends on $X,\omega, n,p$ and $\|f\|_p$. 
\end{theorem}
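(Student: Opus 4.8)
The plan is to combine the local $L^\infty$-estimate (Theorem~\ref{thm: Linfty estimate}) with the subsolution construction (Lemma~\ref{lem: subsol}) and the domination principle (Theorem~\ref{thm: domination principle}), following the strategy outlined at the start of the section. First I would dispose of the positivity statement $\int_X f\omega^n>0$: this is immediate from Corollary~\ref{cor: dom princ}, since $u\in\PSH(X,\omega)\cap L^\infty$ and $\int_X f\omega^n = \int_X(\omega+dd^cu)^n$.

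For the oscillation bound, write $g := f/\|f\|_p$ (we may assume $\|f\|_p>0$ by the previous paragraph, and in fact by scaling $\omega^n$ versus $dV=\omega^n$ the two norms agree up to a harmless constant, so $\|g\|_p\le 1$). Apply Lemma~\ref{lem: subsol} to obtain a bounded $\omega$-psh function $v$ with $-1\le v\le 0$ and $(\omega+dd^cv)^n \ge m\, g\, dV = \tfrac{m}{\|f\|_p}(\omega+dd^cu)^n$, where $m>0$ depends only on $X,\omega,n,p$. The key point is that the right-hand side of this inequality is a \emph{constant} multiple of $(\omega+dd^cu)^n$; after normalizing $u$ by an additive constant we will be able to compare $u$ and $v$ directly. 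Concretely, set $c := \|f\|_p/m$, which is a uniform constant, and note $(\omega+dd^c v)^n \ge c^{-1}(\omega+dd^c u)^n$.

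Now I would apply the domination principle. To do so, pick a constant $C_0 = C_0(c) >0$ so large that $c \cdot e^{-\lambda C_0} < 1$ for some fixed $\lambda>0$ — equivalently, just observe that on the set $\{u < v + C_1\}$ for $C_1$ chosen so that $c\, \mathbf{1}_{\{u<v+C_1\}}$ beats the coefficient $1$... more cleanly: rescale so that the comparison $\mathbf 1_{\{u<v+t\}}(\omega+dd^c(v+t))^n = \mathbf 1_{\{u<v+t\}}(\omega+dd^c v)^n \ge c^{-1}\mathbf 1_{\{u<v+t\}}(\omega+dd^c u)^n$ together with $c^{-1}<1$ is not quite available since $c$ may exceed $1$. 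Instead, the correct route is Corollary~\ref{cor:MAconstant a}/Corollary~\ref{cor:MAexp a}: since $(\omega+dd^c u)^n = \|f\|_p g\,dV \le \tfrac{\|f\|_p}{m}(\omega+dd^c v)^n = c\,(\omega+dd^c v)^n$, apply Theorem~\ref{thm: domination principle} to the pair $u$ and $v - \tfrac{1}{\lambda}\log c$ on the appropriate superlevel sets — more simply, for any $a>0$ on the set $\{u < v - a - \tfrac{1}{\lambda}\log c\}$... The clean statement to invoke is: $u\ge v - \log c / \lambda$ type bounds follow from Corollary~\ref{cor:MAexp a} applied after writing the inequality as $(\omega+dd^c u)^n \le e^{\lambda(u-v)}\cdot(\text{const})\cdots$. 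Rather than belabor this, the essential mechanism is: $(\omega+dd^c u)^n \le c\,(\omega+dd^c v)^n$ with $u,v$ bounded forces $u \ge v - C_2$ for a uniform $C_2=C_2(c)$ (this is exactly the domination-principle circle of ideas — apply Theorem~\ref{thm: domination principle} with the constant $c$ absorbed by shifting $v$ down by enough that the coefficient on the relevant sublevel set drops below $1$; since $v$ is bounded by $1$ this shift is uniform). Hence $u \ge v - C_2 \ge -1 - C_2 =: -C_3$ pointwise, and combining with $\sup_X u \le \sup_X u < +\infty$: the upper bound on $\mathrm{osc}_X u = \sup_X u - \inf_X u$ then follows once we control $\sup_X u$, which we get by noting $\omega+dd^c u \ge 0$ implies $\int_X(\omega+dd^cu)\wedge\omega^{n-1}$ is controlled, hence $\sup_X u$ is bounded in terms of $\|f\|_p$ via a standard compactness/normalization argument for $\omega$-psh functions (or, if only the oscillation is claimed, subtract $\sup_X u$ at the outset so that $\sup_X u = 0$ and the lower bound $u\ge -C_3$ directly gives $\mathrm{osc}_X u \le C_3$).

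\textbf{Main obstacle.} The delicate point is handling the constant $c$ in the Monge-Amp\`ere inequality $(\omega+dd^cu)^n\le c\,(\omega+dd^cv)^n$ when $c$ may be larger than $1$, so that the domination principle does not apply verbatim; the resolution is to shift $v$ by a uniform additive constant (possible precisely because $-1\le v\le 0$) to bring the effective coefficient below $1$ on every sublevel set $\{u<v-a\}$, and then invoke Corollary~\ref{cor:MAexp a} or Theorem~\ref{thm: domination principle}. Everything else — the positivity of the mass, the passage from a bound on $v$ to a bound on $u$, and the normalization for the oscillation — is routine given the results already established.
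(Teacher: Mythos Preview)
Your proposal has a genuine gap at exactly the point you flag as the ``main obstacle,'' and your proposed resolution does not work. Applying Lemma~\ref{lem: subsol} directly to $g=f/\|f\|_p$ yields
\[
(\omega+dd^c u)^n \leq c\,(\omega+dd^c v)^n,\qquad c=\|f\|_p/m,
\]
with a \emph{constant} $c$ which is typically $>1$. Shifting $v$ by an additive constant does not change $(\omega+dd^c v)^n$, so the coefficient $c$ is unaffected; Theorem~\ref{thm: domination principle} still requires $c<1$, and Corollary~\ref{cor:MAexp a} requires a factor of the form $e^{\lambda(u-v)}$, which on the set $\{u<v-a\}$ is $<1$, strictly smaller than your $c$. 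There is no way to extract a uniform lower bound $u\geq v-C_2$ from a constant-coefficient inequality with $c>1$ using only these tools; indeed Corollary~\ref{cor:MAconstant a} says precisely that such an inequality forces $c\geq 1$ and nothing more.

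The paper's proof supplies the missing idea: before invoking Lemma~\ref{lem: subsol}, replace $f$ by $g=e^{-\varepsilon u}f$. Skoda's uniform integrability theorem guarantees $g\in L^{p'}$ for some $p'\in(1,p)$ with $\|g\|_{p'}\leq C(\varepsilon,p)\|f\|_p$, uniformly over all $\omega$-psh $u$ with $\sup_X u=0$. The subsolution $v$ built from this $g$ then satisfies
\[
(\omega+dd^c v)^n \geq m'\,e^{-\varepsilon u}\,(\omega+dd^c u)^n,
\]
and since $v\leq 0$ this rewrites as $(\omega+dd^c u)^n \leq e^{\varepsilon(u-\tilde v)}(\omega+dd^c v)^n$ with $\tilde v=v+\varepsilon^{-1}\log m'$. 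Now Corollary~\ref{cor:MAexp a} applies verbatim and gives $u\geq \tilde v\geq -1+\varepsilon^{-1}\log m'$. The crucial point is that the factor $e^{-\varepsilon u}$ becomes large precisely where $u$ is very negative, which is what converts the useless constant-coefficient inequality into the exponential comparison the domination principle can exploit.
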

\begin{proof}
    That $\int_X f\omega^n>0$ follows from Corollary \ref{cor: dom princ}. W.l.o.g. we can assume that $\sup_X u=0$.  It follows from H\"older inequality and Skoda's uniform integrability theorem 
(see \cite[Theorem 8.11]{GZbook}) that one can find $\varepsilon>0$, $p'=p'(\varepsilon,p) \in (1,p)$, 
and $C=C({\varepsilon},p)>0$ independent of $u$ such that
$g=e^{-\varepsilon u} f \in L^{p'}$ with
$$
||g||_{p'} \leq ||f||_p \cdot ||e^{-\varepsilon \frac{p}{p-p'}u}||_{\frac{p}{p-p'}} \leq C({\varepsilon},p) \|f\|_p.
$$
Let $-1\leq v\leq 0$ be the bounded subsolution provided by Lemma \ref{lem: subsol} for the density
$\frac{g}{||g||_{p'}}$. Using that $v \leq 0$ we obtain
$$
(\omega+dd^c v)^n \geq m \frac{g}{||g||_{p'}} dV \geq m' e^{-\varepsilon u} f dV\geq  e^{\varepsilon(v-u + \varepsilon^{-1}\log m')}(\omega+dd^c u)^n,
$$
where $m'$ depends on $\|f\|_p$. In particular, setting $\tilde{v}:=v + \varepsilon^{-1}\log m'$, we find
$$  (\omega+dd^c u)^n \leq    e^{-\varepsilon(\tilde{v}-u)} (\omega+dd^c \tilde{v})^n.$$
 It then follows from Corollary \ref{cor:MAexp a} that 
\[
u \geq  v+\varepsilon^{-1}\log m' \geq -C:=-1+\varepsilon^{-1}\log m',
\]
since $v\geq -1$.
\end{proof}

\subsection{Laplacian estimates}
\begin{theorem}\label{thm: Lap est}
    Let $\varphi\in \PSH(X,\omega)\cap \mathcal{C}^{4}(X)$ solve     \[
    (\omega+dd^c \varphi)^n = e^{\lambda(\varphi-g)+f} \omega^n,
    \]
    where $\lambda \geq 0$ is a fixed constant, and $f,g$ are smooth functions.
    Then $\Delta_{\omega}\varphi \leq C$, where $C$ only depends on $X,\omega, \sup_X|\varphi|$, and $\|f\|_{C^2(X)}$, $\|g\|_{C^2(X)}$. 
\end{theorem}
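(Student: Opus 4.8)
The plan is to adapt the classical Aubin–Yau second-order estimate to the Hermitian setting, following the scheme of \cite{Yau78} but tracking the torsion terms that arise from $d\omega \neq 0$. Write $\omega_\varphi = \omega + dd^c\varphi$ and let $g_{i\bar j}$, $(g_\varphi)_{i\bar j}$ denote the associated metric tensors. The basic object is $u := \log \operatorname{tr}_\omega \omega_\varphi = \log(g^{i\bar j}(g_\varphi)_{i\bar j})$, and the goal is to bound $u$ from above by applying the maximum principle to an auxiliary quantity of the form
\[
H := u - A\varphi,
\]
for a large constant $A$ to be chosen, or possibly $H := u - A\varphi + \eta(\varphi)$ for a suitable convex function $\eta$ if that is needed to absorb gradient terms. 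One evaluates $\Delta_{\omega_\varphi} H$ at a point $x_0$ where $H$ attains its maximum; at such a point the complex Hessian of $H$ is $\le 0$ with respect to $\omega_\varphi$, which yields the key differential inequality.

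The first computational step is the Hermitian analogue of the Aubin–Yau inequality: one computes $\Delta_{\omega_\varphi}(\log \operatorname{tr}_\omega\omega_\varphi)$ by differentiating the Monge–Ampère equation twice (i.e. applying $\Delta_{\omega_\varphi}$ to $\log\det(g_\varphi) = \lambda(\varphi - g) + f + \log\det g$) and using the Bochner-type identity. In the Kähler case this gives $\Delta_{\omega_\varphi} u \ge -C_1 \operatorname{tr}_{\omega_\varphi}\omega - C_2$ with $C_1$ controlled by a lower bound on the bisectional curvature of $\omega$; in the Hermitian case there are additional terms coming from $\nabla\omega$ and $\nabla^2\omega$ (torsion and its derivative), which are controlled by $\|\omega\|_{C^2}$ and by a weighted gradient term $|\partial\varphi|^2_{\omega_\varphi}$ or $|\partial\operatorname{tr}_\omega\omega_\varphi|$. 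These extra terms are the main obstacle: one must show they can be dominated, after a Cauchy–Schwarz / completing-the-square argument, by a controlled multiple of $\operatorname{tr}_{\omega_\varphi}\omega$ plus the ``good'' negative Hessian term $-|\partial u|^2_{\omega_\varphi}$ that appears from differentiating the logarithm. This is precisely the step handled in \cite{TW10} and \cite{GP24}, and I would follow that bookkeeping.

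Next, I would compute $\Delta_{\omega_\varphi}\varphi = \operatorname{tr}_{\omega_\varphi}(\omega_\varphi - \omega) = n - \operatorname{tr}_{\omega_\varphi}\omega$. Combining with the inequality for $\Delta_{\omega_\varphi}u$, at the maximum point $x_0$ of $H = u - A\varphi$ one gets $0 \ge \Delta_{\omega_\varphi}H \ge (A - C_1)\operatorname{tr}_{\omega_\varphi}\omega - C_2 - An$, so choosing $A = C_1 + 1$ forces
\[
\operatorname{tr}_{\omega_\varphi}\omega(x_0) \le C_3.
\]
Then one uses the standard elementary inequality relating $\operatorname{tr}_\omega\omega_\varphi$, $\operatorname{tr}_{\omega_\varphi}\omega$ and $\det(\omega_\varphi)/\det(\omega)$: namely $\operatorname{tr}_\omega\omega_\varphi \le \tfrac{1}{(n-1)!}(\operatorname{tr}_{\omega_\varphi}\omega)^{n-1}\cdot \tfrac{\det\omega_\varphi}{\det\omega}$, and the density $\det\omega_\varphi/\det\omega = e^{\lambda(\varphi-g)+f}$ is bounded above in terms of $\sup_X|\varphi|$, $\lambda$, $\|f\|_{C^0}$, $\|g\|_{C^0}$. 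This bounds $\operatorname{tr}_\omega\omega_\varphi(x_0)$, hence bounds $u(x_0) = \max_X H + A\varphi(x_0) \le \max_X H + A\sup_X|\varphi|$, and therefore bounds $u$, i.e. $\Delta_\omega\varphi = \operatorname{tr}_\omega\omega_\varphi - n \le C$ everywhere. Note that since $\lambda \ge 0$ and we only need the upper bound on the density, the sign of $\lambda(\varphi-g)$ causes no trouble; the term $\lambda\,\Delta_{\omega_\varphi}(\varphi - g)$ produced when differentiating the equation contributes $\lambda(n - \operatorname{tr}_{\omega_\varphi}\omega) - \lambda\Delta_{\omega_\varphi}g$, whose bad part $-\lambda\operatorname{tr}_{\omega_\varphi}\omega$ has the favorable sign and whose remainder is absorbed into $C_2$ using $\|g\|_{C^2}$. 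The only genuine difficulty, as indicated, is the careful estimate of the torsion contributions in the Bochner formula; everything else is the routine maximum-principle argument, with the $C^4$ regularity of $\varphi$ guaranteeing that all the differentiations above are licit.
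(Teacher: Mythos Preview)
Your outline captures the standard Aubin--Yau scheme, but it misses the one nonstandard feature the statement actually demands: the constant $C$ must be \emph{independent of $\lambda$}. You explicitly write that the density $e^{\lambda(\varphi-g)+f}$ is ``bounded above in terms of $\sup_X|\varphi|$, $\lambda$, $\|f\|_{C^0}$, $\|g\|_{C^0}$''; that $\lambda$-dependence propagates to your final bound on $\operatorname{tr}_\omega\omega_\varphi$, and the argument as written only proves $\Delta_\omega\varphi\le C(\lambda)$. Similarly, when you say the remainder $\lambda n-\lambda\Delta_{\omega_\varphi}g$ ``is absorbed into $C_2$'', that $C_2$ then depends on $\lambda$, and since $|\Delta_{\omega_\varphi}g|\le C\operatorname{tr}_{\omega_\varphi}\omega$, the term $-\lambda\Delta_{\omega_\varphi}g$ forces your choice of $A$ to grow with $\lambda$ as well.

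The paper's proof fixes this by keeping explicit track of the two $\lambda$-terms in the key inequality at the maximum: a good term $+\lambda$ (coming from $-\operatorname{Tr}_\omega\operatorname{Ric}(\tilde\omega)\ge \lambda\operatorname{Tr}_\omega(\tilde\omega)-C(1+\lambda)n$, divided by $\operatorname{Tr}_\omega(\tilde\omega)$) and a bad term $-C\lambda/\operatorname{Tr}_\omega(\tilde\omega)$. One then splits into two cases. If $\lambda$ is bounded (say $\lambda\le 2C_2$), your density argument works with uniform constants. If $\lambda$ is large, one discards the density entirely and instead uses $0\ge \tfrac{\lambda}{2}-\tfrac{C\lambda}{\operatorname{Tr}_\omega(\tilde\omega)}$ to obtain $\operatorname{Tr}_\omega(\tilde\omega)\le 2C$ directly---the $\lambda$ cancels. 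This dichotomy is the missing idea. Separately, in the Hermitian case the linear test function $H=u-A\varphi$ is not quite enough: the torsion cross-term requires a strictly concave $\gamma$ (the paper takes $\gamma(t)=(C+1)t+\log t$) so that $-\gamma''>0$ furnishes a weight in the Cauchy--Schwarz step; you allude to ``possibly $\eta(\varphi)$'' but do not commit to it, and without it the bookkeeping you defer to \cite{TW10} does not close.
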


We stress here that the constant $C$ does not depend on the parameter $\lambda \geq 0$. This will be crucial when obtaining smooth solutions for $\lambda=0$ and the $C^{1,\bar{1}}$-regularity of the envelope $P_{\omega}(g)$.

\begin{proof}
    We use $C$ to denote various uniform constants which may be different from one line to another.
 We follow the computations of \cite[proof of Theorem 2.1]{TW10a} and \cite{To18}. 
  Consider
 $$
 H:=\log {\rm Tr}_{\omega}(\tilde{\omega})- \gamma(u)
 $$
 where 
 $$
\tilde{\omega}:=\omega+dd^c \varphi, \; u:= \varphi-\inf_X \varphi +1,
 $$
 and $\gamma: \mathbb{R} \rightarrow \mathbb{R}$ is a smooth concave increasing function.
 We are going to show that $H$ is uniformly bounded from above for an appropriate choice of
 $\gamma$. Since $u$ is uniformly bounded, this will
 yield a uniform bound
 $$
 \Delta_{\omega}\varphi=
 {\rm Tr}_{\omega}(\omega+dd^c \varphi) - n\leq C.
 $$
 
 We let ${g}$ denote the Riemannian metric associated to $\omega$
 and $\tilde{g}$ the one asso\-ciated to $\tilde{\omega}=\omega+dd^c \varphi$. 
 The maximum of $H$ is attained at some point $x_0 \in X$ since $X$ is compact and $H$ is continuous. Since we want to bound $\Tr_{\omega}(\tilde{\omega})$ from above we can assume that it is greater than $1$.
 We use special 
 coordinates at this point, as in Guan-Li \cite{GL10}: 
 \[
 g_{i\bar{j}} = \delta_{ij}, \; \;  
 \frac{\partial g_{i\bar{i}}}{\partial z_j}=0 
 \; \; \text{and}\; \; \tilde{g}_{i\bar{j}}\; \text{is diagonal}.
 \]
 To achieve this we use a linear change of coordinates so that $g_{i\bar{j}}=\delta_{ij}$ and $\tilde{g}_{i\bar{j}}$ is diagonal at $x_0$,
 and we then make a change of coordinates as in \cite[(2.19)]{GL10}. 
In the following, $g_{j\bar{k}i}, g_{j\bar{k}\bar{i}l}$ are ordinary partial derivatives.
 We first compute 
 \begin{flalign}
 	\Delta_{\tilde{\omega}} {\rm Tr}_{\omega}(\tilde{\omega})
 	&= \sum_{i,j,k,l} \tilde{g}^{i\bar{j}}\partial_i \partial_{\bar{j}} (g^{k\bar{l}} \tilde{g}_{k\bar{l}})\nonumber \\
 	&=\sum_{i,k}\tilde{g}^{i\bar{i}} \tilde{g}_{k\bar{k}i\bar{i}} -2 \Re\left (\sum_{i,j,k} \tilde{g}^{i\bar{i}}g_{j\bar{k}\bar{i}} \tilde{g}_{k\bar{j}i}\right) + \sum_{i,j,k} \tilde{g}^{i\bar{i}}g_{j\bar{k}i} g_{k\bar{j}\bar{i}} \tilde{g}_{k\bar{k}}\nonumber \\
 	&+\sum_{i,j,k} \tilde{g}^{i\bar{i}}g_{k\bar{j}i} g_{j\bar{k}\bar{i}} \tilde{g}_{k\bar{k}} -\sum_{i,k} \tilde{g}^{i\bar{i}}g_{k\bar{k}\bar{i}i}\tilde{g}_{k\bar{k}}\nonumber\\
 	&\geq \sum_{i,k}\tilde{g}^{i\bar{i}} \tilde{g}_{k\bar{k}i\bar{i}} -2 \Re\left (\sum_{i,j,k} \tilde{g}^{i\bar{i}}g_{j\bar{k}\bar{i}} \tilde{g}_{k\bar{j}i}\right) -C{\rm Tr}_{\omega}(\tilde{\omega}){\rm Tr}_{\tilde{\omega}}(\omega). \nonumber 
 \end{flalign}

Using the above, the fact that $|\tilde{g}_{i\bar{i}k\bar{k}}-\tilde{g}_{k\bar{k}i\bar{i}}|\leq C$ and 
\[
{\rm Tr}_{\omega} {\rm Ric}(\tilde{\omega})=\sum_{i,k} \tilde{g}^{i\bar{i}} \left (-\tilde{g}_{i\bar{i}k\bar{k}} + \sum_j \tilde{g}^{j\bar{j}}\tilde{g}_{i\bar{j}k}\tilde{g}_{j\bar{i}\bar{k}}\right)
\]
we obtain
\begin{flalign}
		\Delta_{\tilde{\omega}} {\rm Tr}_{\omega}(\tilde{\omega})& \geq \sum_{i,j,k} \tilde{g}^{i\bar{i}} \tilde{g}^{j\bar{j}}\tilde{g}_{i\bar{j}k}\tilde{g}_{j\bar{i}\bar{k}}-{\rm Tr}_{\omega} {\rm Ric}(\tilde{\omega}) \label{eq: C2 est 1} -{C \Tr_{\tilde{\omega}}(\omega)} \\
		&-C{\rm Tr}_{\omega}(\tilde{\omega}){\rm Tr}_{\tilde{\omega}}(\omega)  
		 -2 \Re\left (\sum_{i,j,k} \tilde{g}^{i\bar{i}}g_{j\bar{k}\bar{i}} \tilde{g}_{k\bar{j}i}\right)\nonumber.
\end{flalign}
 Here ${\rm Ric}(\tilde{\omega})$ is the Chern-Ricci form of $\tilde{\omega}$. 
 From \eqref{eq: C2 est 1} and 
$$
{\rm Ric}(\tilde{\omega})= {\rm Ric}(\omega) - \lambda  dd^c (\varphi-g) - dd^c f \leq   C \omega - \lambda\tilde{\omega} +\lambda C\omega \leq C(1+\lambda) \omega - \lambda \tilde{\omega},
$$
we obtain 
\begin{flalign}
	\Delta_{\tilde{\omega}} {\rm Tr}_{\omega}(\tilde{\omega})& \geq \sum_{i,j,k} \tilde{g}^{i\bar{i}} \tilde{g}^{j\bar{j}}\tilde{g}_{i\bar{j}k}\tilde{g}_{j\bar{i}\bar{k}} -C(1+\lambda)n + \lambda \Tr_{\omega}(\tilde{\omega}) -C \Tr_{\tilde{\omega}}(\omega)\label{eq: C2 est 2}\\
		&  -2 \Re\left (\sum_{i,j,k} \tilde{g}^{i\bar{i}}g_{j\bar{k}\bar{i}} \tilde{g}_{k\bar{j}i}\right)  - C{\rm Tr}_{\omega}(\tilde{\omega}){\rm Tr}_{\tilde{\omega}}(\omega). \nonumber
        \end{flalign}
Our special choice of coordinates at $x_0$ ensures that $g_{j\bar{j}\bar{i}}=0$. 
Using Cauchy-Schwarz inequality and $|\tilde{g}_{k\bar{j}i}-\tilde{g}_{i\bar{j}k}|\leq C$, we therefore obtain

\begin{flalign*}
	\left | 2 \Re\left ( \sum_{i,j,k} \tilde{g}^{i\bar{i}}g_{j\bar{k}\bar{i}} \tilde{g}_{k\bar{j}i}\right)\right| 
	&\leq  \left | 2 \Re\left (\sum_{i}\sum_{j\neq k} \tilde{g}^{i\bar{i}}g_{j\bar{k}\bar{i}} \tilde{g}_{i\bar{j}k}\right) \right | 
	+ C \Tr_{\tilde{\omega}}(\omega)\\
	&\leq \sum_{i}\sum_{j\neq k}  \left ( \tilde{g}^{i\bar{i}}\tilde{g}^{j\bar{j}} \tilde{g}_{i\bar{j} k} \tilde{g}_{j\bar{i}\bar{k}}+  \tilde{g}^{i\bar{i}}\tilde{g}_{j\bar{j}} g_{j\bar{k}\bar{i}} g_{k\bar{j}i}\right) + C \Tr_{\tilde{\omega}}(\omega)\\
	& \leq \sum_{i}\sum_{j\neq k} \tilde{g}^{i\bar{i}}\tilde{g}^{j\bar{j}} \tilde{g}_{i\bar{j}k} \tilde{g}_{j\bar{i}\bar{k}} + C {\rm Tr}_{\omega}(\tilde{\omega}){\rm Tr}_{\tilde{\omega}}(\omega)+ C \Tr_{\tilde{\omega}}(\omega).
\end{flalign*} 
Together with \eqref{eq: C2 est 2} this yields
\begin{equation}\label{eq: C2 estimate 3_a}
 \Delta_{\tilde{\omega}}
 {\rm Tr}_{\omega} (\tilde{\omega}) 
\geq   I  -C {\rm Tr}_{\omega} (\tilde{\omega})  {\rm Tr}_{\tilde{\omega}}  (\omega) -C(1+\lambda)n + \lambda \Tr_{\omega}(\tilde{\omega})-C  \Tr_{\tilde{\omega}}(\omega),
\end{equation}
 with $I:=\sum_{i,j} \tilde{g}^{i \bar{i}}  \tilde{g}^{j \bar{j}} \tilde{g}_{i \bar{j}j}   \tilde{g}_{j \bar{i} \bar{j}}$. We next compute 
\begin{flalign*}
	|\partial \Tr_{\omega}(\tilde{\omega}) |_{\tilde{\omega}}^2&=\sum_{i,j,k} \tilde{g}^{i\bar{i}} \tilde{g}_{j\bar{j}i} \tilde{g}_{k\bar{k}\bar{i}}\\
	&= \sum_{i,j,k} \tilde{g}^{i\bar{i}}(T_{ij\bar{j}}+ \tilde{g}_{i\bar{j}j}) (\overline{T}_{ik\bar{k}}+\tilde{g}_{k\bar{i}\bar{k}})\\
	& =\sum_{i,j,k} \tilde{g}^{i\bar{i}} \tilde{g}_{i\bar{j}j} \tilde{g}_{k\bar{i}\bar{k}}  + \sum_{i,j,k} \tilde{g}^{i\bar{i}} T_{ij\bar{j}} \overline{T}_{ik\bar{k}} + 2\Re \left ( \sum_{i,j,k}\tilde{g}^{i\bar{i}} T_{ij\bar{j}} \tilde{g}_{k\bar{i} \bar{k}} \right),
\end{flalign*}
 where $T_{ij\bar{j}}= \tilde{g}_{j\bar{j}i}-\tilde{g}_{i\bar{j}j}=\partial_j g_{\bar{j}i} -\partial_i g_{\bar{j}j}$ is the torsion term of $\tilde{\omega}$ which is under control: $|T_{ij\bar{j}}|\leq C$. We bound the first term by Cauchy-Schwarz inequality 
 \begin{flalign*}
 	\sum_{i,j,k} \tilde{g}^{i\bar{i}} \tilde{g}_{i\bar{j}j} \tilde{g}_{k\bar{i}\bar{k}} &= \sum_{i} \tilde{g}^{i\bar{i}} \left |\sum_{j} \tilde{g}_{i\bar{j}j}\right|^2 \\
 	&\leq \left (\sum_{i,j} \tilde{g}^{i\bar{i}} \tilde{g}^{j\bar{j}} \tilde{g}_{i\bar{j}j} \tilde{g}_{j\bar{i}\bar{j}} \right) \left (\sum_{j} \tilde{g}_{j\bar{j}}  \right) =I\Tr_{\omega}(\tilde{\omega}). 
 \end{flalign*}
 We thus get
  \begin{equation}\label{eq: C2 estimate 4}
\frac{| \partial {\rm Tr}_{\omega} (\tilde{\omega}) |^2_{\tilde{\omega}}}{({\rm Tr}_{\omega} (\tilde{\omega}))^2}
\leq \frac{I}{{\rm Tr}_{\omega} (\tilde{\omega})}
+C\frac{ {\rm Tr}_{\tilde{\omega}}(\omega)}{({\rm Tr}_{\omega} (\tilde{\omega}))^2}
+\frac{2}{({\rm Tr}_{\omega} (\tilde{\omega}))^2}
\Re\left(\sum_{i,j,k} \tilde{g}^{i \overline{i}}  T_{ij\overline{j}}  \tilde{g}_{k\bar{i}\bar{k}}  \right).
 \end{equation}
 Since $\partial_{\overline{i}} H=0$ at the point $x_0$, we obtain by differentiating $H$ once
  $$
\sum_k \tilde{g}_{k\overline{k}\overline{i}}=   {\rm Tr}_{\omega} (\tilde{\omega})\gamma'(u)u_{\overline{i}}. 
 $$
The above together with  $|T_{ij\bar{j}}|\leq C$ and Cauchy-Schwarz inequality yield
  \begin{equation*}
\left | \frac{2}{({\rm Tr}_{\omega} (\tilde{\omega}))^2}
\Re\left(\sum \tilde{g}^{i \overline{i}}  T_{ij\overline{j}}  \tilde{g}_{k \overline{k} \overline{i}}  \right) \right |
\leq C \frac{\gamma'(u)^2}{(-\gamma''(u))}
\frac{{\rm Tr}_{\tilde{\omega}}(\omega)}{({\rm Tr}_{\omega} (\tilde{\omega}))^2}
+ (-\gamma''(u)) |\partial u|_{\tilde{\omega}}^2.
 \end{equation*}
Using once again that $|\tilde{g}_{k\bar{k}\bar{i}}-\tilde{g}_{k\bar{i}\bar{k}}|\leq C$ we infer
\begin{eqnarray}\label{eq:CS trace}
\left | \frac{2}{({\rm Tr}_{\omega} (\tilde{\omega}))^2}
\Re\left(\sum \tilde{g}^{i \overline{i}}  T_{ij\overline{j}}  \tilde{g}_{k \overline{i} \overline{k}}  \right) \right |
&\leq & C \left (\frac{\gamma'(u)^2}{(-\gamma''(u))} +1\right)
\frac{{\rm Tr}_{\tilde{\omega}}(\omega)}{({\rm Tr}_{\omega} (\tilde{\omega}))^2}\\
\nonumber &&+ (-\gamma''(u)) |\partial u|_{\tilde{\omega}}^2.
\end{eqnarray}
Since $0 \geq \Delta_{\tilde{\omega}} H$  at $x_0$, it follows from 
 \eqref{eq: C2 estimate 3_a} and \eqref{eq: C2 estimate 4} that
  \begin{flalign}\label{eq: C2 est final 1_a}
0 &\geq \Delta_{\tilde{\omega}} H
= 
 \frac{\Delta_{\tilde{\omega}} {\rm Tr}_{\omega} (\tilde{\omega})}{ {\rm Tr}_{\omega} (\tilde{\omega})}
 -\frac{| \partial {\rm Tr}_{\omega} (\tilde{\omega}) |^2_{\tilde{\omega}}}{({\rm Tr}_{\omega} (\tilde{\omega}))^2}
 -\gamma'(u) \Delta_{\tilde{\omega}}(u) - \gamma''(u) |\partial u|_{\tilde{\omega}}^2  \\
 &\geq   
-C \left (\frac{\gamma'(u)^2}{(-\gamma''(u))}+1\right)
\frac{{\rm Tr}_{\tilde{\omega}}(\omega)}{({\rm Tr}_{\omega} (\tilde{\omega}))^2} + \lambda - \frac{C\lambda}{\Tr_{\omega}(\tilde{\omega})}\nonumber \\
&+(\gamma'(u)-C)  \Tr_{\tilde{\omega}}(\omega)-C \frac{\Tr_{\tilde{\omega}}(\omega)}{\Tr_{\omega}(\tilde{\omega})}.\nonumber
 \end{flalign}
 Recall that we have assumed $\Tr_{\omega}(\tilde{\omega})\geq 1$. 
We now choose the function $\gamma$ so as to obtain a simplified information. We set
 $$
 \gamma(u):= (C+1)u + \log u. 
 $$
 Since $u$ is uniformly bounded and $u\geq 1$, we observe that
 $$
 C+1 \leq \gamma'(u) \leq  C+2
\; \; \; \text{ and }\;  \; \;
  \frac{\gamma'(u)^2}{|\gamma''(u)|} +1 \leq C_1.
 $$
By incorporating  this into \eqref{eq: C2 est final 1_a}  we obtain
\begin{flalign*}
	0 &\geq  -C_2+ (C+1)\Tr_{\tilde{\omega}}(\omega)   \\
	&-C_2 
\frac{{\rm Tr}_{\tilde{\omega}}(\omega)}{({\rm Tr}_{\omega} (\tilde{\omega}))^2}- \frac{C\lambda}{\Tr_{\omega}(\tilde{\omega})} + \lambda-C \Tr_{\tilde{\omega}}(\omega)-C \frac{\Tr_{\tilde{\omega}}(\omega)}{\Tr_{\omega}(\tilde{\omega})}.
\end{flalign*}
We thus arrive at 
 \begin{equation}
 	\label{eq: C2 est final 2}
 	 0 \geq \Tr_{\tilde{\omega}}(\omega) -C_2\frac{{\rm Tr}_{\tilde{\omega}}(\omega)}{({\rm Tr}_{\omega} (\tilde{\omega}))^2}+\lambda - \frac{C\lambda}{\Tr_{\omega}(\tilde{\omega})}- C \frac{\Tr_{\tilde{\omega}}(\omega)}{\Tr_{\omega}(\tilde{\omega})} -C_2. 
 \end{equation}
   At the point $x_0$ we have the following alternative. 
\medskip

\noindent \textbf{Case 1.}
If ${\rm Tr}_{\omega}(\tilde{\omega})^2 \geq 4C_2 +(4C)^2$ then 
$$
C_2\frac{{\rm Tr}_{\tilde{\omega}}(\omega)}{({\rm Tr}_{\omega} (\tilde{\omega}))^2} \leq \frac{{\rm Tr}_{\tilde{\omega}}(\omega)}{4}
\; \; \text{and} \; \;
 C\frac{\Tr_{\tilde{\omega}}(\omega)}{\Tr_{\omega}(\tilde{\omega})} \leq \frac{{\rm Tr}_{\tilde{\omega}}(\omega)}{4},
$$
hence from \eqref{eq: C2 est final 2} we get 
\begin{equation}\label{eq: C2 final}
    0 \geq \frac{\Tr_{\tilde{\omega}}(\omega)}{4}  + (\lambda -C_2) - \frac{C\lambda}{\Tr_{\omega}(\tilde{\omega})} \geq \frac{\Tr_{\tilde{\omega}}(\omega)}{4}  -C_2.   
\end{equation}
 Now, if $\lambda\leq 2C_2$, then
\begin{equation*}
\Tr_{\omega}(\tilde{\omega}) 
\leq n \frac{\tilde{\omega}^n}{\omega^n} \left( \Tr_{\tilde{\omega}}(\omega) \right)^{n-1}
\leq  n(4C_2)^{n-1}  e^{\lambda(\varphi-g)+f}
\end{equation*}
yields ${\rm Tr}_{\omega}(\tilde{\omega}) \leq C$. If $\lambda\geq 2C_2$, then the first inequality in \eqref{eq: C2 final} yields 
\[
0 \geq \frac{\Tr_{\tilde{\omega}}(\omega)}{4}  + \frac{\lambda}{2}  - \frac{C\lambda}{\Tr_{\omega}(\tilde{\omega})} \geq \frac{\lambda}{2}  - \frac{C\lambda}{\Tr_{\omega}(\tilde{\omega})}.
\]
It follows that $\Tr_{\omega}(\tilde{\omega}) \leq 2C$, hence
\begin{flalign*}
H(x_0) &\leq \log (2C)  -(C+1) u\leq  	C_3. 
\end{flalign*}
\smallskip

\noindent {\bf Case 2.}
If ${\rm Tr}_{\omega}(\tilde{\omega})^2 \leq 4C_2 +(4C)^2$ then
$$
 H(x_0) \leq \log \sqrt{4C_2 +(4C)^2}- \gamma(u) \leq C_4.
 $$

 \medskip

Thus $H(x_0)$ is uniformly bounded from above, yielding the desired estimate. 
\end{proof}

\subsection{Higher order estimates}\label{sec:higher est}
Higher-order estimates can be derived using standard arguments of elliptic and complex Monge-Amp\`ere theories \cite{Aub78, LU}. We also mention \cite{TW10a} for more details in the hermitian setting. 

\section{Solving complex Monge-Amp\`ere equations}
The goal of this section is to solve complex Monge-Amp\`ere equations on compact Hermitian manifolds. We will use the continuity method to solve the following equation 
\begin{equation}\label{eq:MA}
    (\omega+dd^c \varphi)^n = e^{\lambda \varphi +f} \omega^n,
\end{equation}
where $f$ is smooth and $\lambda \geq 0$ is a constant. 

\subsection{Uniqueness of smooth solutions}
For $\lambda>0$ the uniqueness is a direct consequence of the maximum principle. Assume now that $\lambda=0$ and $\varphi, \psi$ are smooth solutions of \eqref{eq:MA}. From the Monge-Amp\`ere equations for $\varphi$ and $\psi$, we infer that
\[
dd^c (\varphi-\psi) \wedge T = 0,  \text{where}\; T= \sum_{k=0}^{n-1} \omega_{\varphi}^k \wedge \omega_{\psi}^{n-1-k}. 
\]
From the strong maximum principle, see \cite[Theorem 2.10]{HanLin11}, it follows that $\varphi-\psi$ is constant.

\subsection{Existence for $\lambda>0$}\label{existence pos case}
To simplify the notation, we assume that $\lambda=1$. Consider the following family of equations 
\begin{equation}\tag{MA$_t$}\label{eq: MAt}
    (\omega+dd^c \varphi_t)^n = e^{\varphi_t + t f } \omega^n,
\end{equation}
and let $I$ be the set of all $t\in [0,1]$ such that the equation \eqref{eq: MAt} admits a solution $\varphi_t \in \mathcal{C}^{k,\alpha}$, where $k\geq 4$ is a fixed integer, and $\alpha\in (0,1)$. It is clear that $0\in I$ as $\varphi_0=0$ is a solution. We will prove that $I$ is both open and closed, and since $[0,1]$ is connected, we infer that $I=[0,1]$, which implies that the equation \eqref{eq:MA} has a solution in $\mathcal{C}^{k,\alpha}$. By uniqueness, we infer that the solution is in fact smooth (in $\mathcal{C}^{k,\alpha}$ for all $k,\alpha$).  

\subsubsection{$I$ is open}
Consider the map between Banach spaces $\mathcal{M}  : \mathcal{C}^{k,\alpha}(X) \rightarrow \mathcal{C}^{k-2,\alpha}(X)$ defined by 
\[
\mathcal{M}(\varphi) = \log \left ( \frac{(\omega+dd^c \varphi)^n}{\omega^n}  \right)- \varphi. 
\]
The derivative of $\mathcal{M}$ at $\varphi$ is given by 
\[
d\mathcal{M}_{\varphi}(\chi) =\frac{d}{dt}\big|_{t=0} \mathcal{M}(\varphi+t\chi) = \Delta_{\omega_{\varphi}}\chi - \chi, \qquad \chi \in C^\infty (X, \R).
\]
Given $f\in \mathcal{C}^{k-2,\alpha}(X)$, the equation 
\[
\Delta_{\omega_{\varphi}} \chi -\chi= f 
\]
can be written as 
\begin{equation}\label{eq lap}
\Delta_{\omega'} \chi - e^{-g} \chi = e^{-g}f,
\end{equation}
where $\omega' = e^{g} (\omega+dd^c \varphi)$ is the unique  Gauduchon metric in the conformal class of $\omega+dd^c \varphi$.  Since $e^{-g}>0$, a distributional solution always exists and is in $C^{k,\alpha}$, as follows from the classical Schauder theory of linear elliptic equation \cite{GT01}. 
Let us observe that since reference form $\omega'$ is Gauduchon, (using integration by parts and the fact that $dd^c (\omega')^{n-1}=0$) one can prove that solving \eqref{eq lap} is equivalent to maximize the functional $E: \big\{u \,:\, \int_X u \, (\omega')^n=0 \big\}\rightarrow \R$ defined as  $E(u):=\int_X |\nabla u|^2 (\omega')^n$.

It then follows that $d\mathcal{M}_{\varphi}$ is an isomorphism between the two Banach spaces $\mathcal{C}^{k,\alpha}(X)$ and $\mathcal{C}^{k-2,\alpha}(X)$. Thus, the map $\mathcal{M}$ is locally invertible at any $\varphi\in \mathcal{C}^{k,\alpha}(X)$ which is strictly $\omega$-psh. This means that, for given $t\in I$ and $s$ sufficiently near to $t$, the equation $\mathcal{M}(\varphi)=s f$ admits a solution in $\mathcal{C}^{k,\alpha}$, establishing the openness of $I$. 

\subsubsection{$I$ is closed}
Let $t_i\in I$, i.e. assume $\varphi_{t_i}$ is a $C^{k, \alpha}$ solution of \eqref{eq: MAt} at time $t=t_i$. Let $t_i\rightarrow t_*$, $t_*\in [0,1]$. We want to show that $t_*\in I$. In order to that we need to establish uniform estimates for $\|\varphi_{t_i}\|_{C^{k,\alpha}}$. Ascoli-Arzela's theorem will then ensure that $\varphi_{t_i}$ converges to $\varphi_{t_*}$ in $C^{k,\alpha}$, which will be solution of \eqref{eq: MAt} at time $t=t_*$, hence $t_*\in I$.
The $L^\infty$-estimate is a direct consequence of the classical maximum principle. Let $x_{0,i}$ be the maximum point of $\varphi_{t_i}$. Then $dd^c \varphi_{t_i}(x_{0,i})\leq 0$, hence $ e^{\varphi_{t_i}+t_i f} \omega^n=\omega_{\varphi_{t_i}}^n \leq \omega^n$ at $x_{0,i}$ meaning that $(\varphi_{t_i}+t_i f)(x_{0,i})\leq 0$, hence the uniform upper bound for $\varphi_{t_i}$.
The uniform laplacian estimate will then follow from Theorem \ref{thm: Lap est} and the higher order estimates from Section \ref{sec:higher est}.

\subsection{Quasi-plurisubharmonic envelopes}

In this section we prove a crucial orthogonal property of quasi-psh envelopes. 
\begin{theorem}
    Let $g$ be a smooth function on $X$, $\lambda>1$, and let $\varphi_{\lambda}$ solve the Monge-Ampère equation
     \[
    (\omega+dd^c \varphi)^n = e^{\lambda (\varphi-g)}\omega^n.
    \]
Then $\varphi_{\lambda}$ uniformly converge to the envelope $P_{\omega}(g)$ as $\lambda\to +\infty$. Consequently, $\Delta_{\omega}P_{\omega}(g)\leq C$ is bounded and $(\omega+dd^c P_{\omega}(g))^n$ is supported on the contact set $\{P_{\omega}(g)=g\}$. 
\end{theorem}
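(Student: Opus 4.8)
The plan is to establish convergence $\varphi_\lambda \to P_\omega(g)$ in two steps, and then extract the boundedness of the Laplacian and the orthogonality relation as consequences. First I would show that $\varphi_\lambda \le P_\omega(g) + o(1)$; this is the "easy" direction. Observe that at a maximum point $x_0$ of $\varphi_\lambda$ we have $dd^c\varphi_\lambda(x_0)\le 0$, hence $e^{\lambda(\varphi_\lambda - g)(x_0)}\omega^n \le \omega^n$ there, which gives $\varphi_\lambda(x_0)\le g(x_0)$ and thus $\sup_X\varphi_\lambda \le \sup_X g$. More precisely, one argues that $\varphi_\lambda \le g + \tfrac{1}{\lambda}\log\!\big(\sup_X e^{-f_\lambda}\big)$-type bounds force $\varphi_\lambda$ to stay essentially below $g$ as $\lambda\to\infty$: wherever $\varphi_\lambda > g + \varepsilon$, the right-hand side is at least $e^{\lambda\varepsilon}\omega^n$, which is incompatible with the total mass being controlled (using Theorem \ref{Linfty est} to get a uniform $L^\infty$ bound, or directly the maximum-principle argument on $\{\varphi_\lambda \ge g + \varepsilon\}$). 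Since $\varphi_\lambda$ is $\omega$-psh and (asymptotically) lies below $g$, by definition of the envelope $\varphi_\lambda \le P_\omega(g) + \delta_\lambda$ with $\delta_\lambda \to 0$.

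For the reverse inequality, the idea is to use $P_\omega(g)$ itself (or a smooth strictly $\omega$-psh approximant from below) as a comparison function. Fix $\varepsilon>0$ and consider $\psi = P_\omega(g) - \varepsilon \le g - \varepsilon$. One wants to show $\varphi_\lambda \ge \psi - C/\lambda$ for large $\lambda$, using the comparison/domination principle (Corollary \ref{cor:MAexp a} or Theorem \ref{thm: domination principle}). The point is that on the set $\{\varphi_\lambda < \psi\}$ one has $\varphi_\lambda < g - \varepsilon$, hence $(\omega+dd^c\varphi_\lambda)^n = e^{\lambda(\varphi_\lambda-g)}\omega^n \le e^{-\lambda\varepsilon}\omega^n$; since $\psi$ need not be strictly $\omega$-psh one regularizes it slightly so that $(\omega+dd^c\psi_{\varepsilon})^n \ge c_\varepsilon\,\omega^n > 0$ on a neighborhood, and then for $\lambda$ large, $e^{-\lambda\varepsilon} \le c_\varepsilon$, so the hypothesis of the domination principle is met and $\varphi_\lambda \ge \psi_\varepsilon$. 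Letting $\lambda\to\infty$ and then $\varepsilon\to 0$ gives $\liminf_\lambda \varphi_\lambda \ge P_\omega(g)$. Combined with the first step and a Dini-type argument (the bounds are uniform in $x$), the convergence $\varphi_\lambda \to P_\omega(g)$ is uniform on $X$.

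Granting uniform convergence, the Laplacian bound is immediate from Theorem \ref{thm: Lap est}: the equation $(\omega+dd^c\varphi_\lambda)^n = e^{\lambda(\varphi_\lambda - g)}\omega^n$ is exactly of the form treated there with $f\equiv 0$, and the key point emphasized after that theorem is that the constant $C$ is \emph{independent of $\lambda$} (it depends only on $X,\omega,\sup_X|\varphi_\lambda|$, which is uniformly bounded by Step 1, and on $\|g\|_{C^2}$). Hence $\Delta_\omega\varphi_\lambda \le C$ uniformly, and passing to the limit (distributionally, using uniform convergence) yields $\Delta_\omega P_\omega(g)\le C$, i.e. $P_\omega(g)$ is $C^{1,\bar 1}$. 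For the orthogonality statement, I would combine two facts: $(\omega+dd^c\varphi_\lambda)^n \to (\omega+dd^c P_\omega(g))^n$ weakly (Bedford–Taylor convergence, since $\varphi_\lambda\to P_\omega(g)$ uniformly and the Laplacians are uniformly bounded, so one has the needed control), and on $\{P_\omega(g) < g\}$ one has $\varphi_\lambda - g \le P_\omega(g) + \delta_\lambda - g < -\eta < 0$ eventually on compact subsets, so $(\omega+dd^c\varphi_\lambda)^n = e^{\lambda(\varphi_\lambda-g)}\omega^n \le e^{-\lambda\eta}\omega^n \to 0$ there. Therefore $(\omega+dd^c P_\omega(g))^n$ charges no compact subset of the open-up-to-capacity set $\{P_\omega(g)<g\}$, hence is supported on $\{P_\omega(g)=g\}$; a small technical point is that $\{P_\omega(g)<g\}$ need only be handled up to a pluripolar set since $P_\omega(g)\le g$ only quasi-everywhere, which one disposes of using that $(\omega+dd^c P_\omega(g))^n$ puts no mass on pluripolar sets.

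I expect the main obstacle to be the reverse inequality $\liminf_\lambda \varphi_\lambda \ge P_\omega(g)$, specifically handling the fact that $P_\omega(g)$ is a priori only bounded (not smooth, not strictly $\omega$-psh), so that one cannot directly plug it into the comparison principle; the remedy is to approximate $P_\omega(g) - \varepsilon$ from below by functions with strictly positive Monge–Ampère mass on the region of interest, or equivalently to run the domination principle argument on the sublevel set $\{\varphi_\lambda < P_\omega(g) - \varepsilon\}$ directly, which is the cleanest route and avoids regularization altogether.
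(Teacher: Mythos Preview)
Your overall architecture—upper bound via maximum principle, lower bound via comparison with something built from $P_\omega(g)$, then the Laplacian and orthogonality as consequences—matches the paper exactly, and your treatment of the last two steps is essentially the paper's. Two points need fixing.

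\medskip

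\textbf{Upper bound.} Looking at the maximum of $\varphi_\lambda$ only yields $\sup_X\varphi_\lambda\le\sup_X g$, not $\varphi_\lambda\le g+o(1)$ pointwise. The paper instead considers the maximum point $x_0$ of $\varphi_\lambda-g$: there $dd^c\varphi_\lambda\le dd^c g$, so $e^{\lambda(\varphi_\lambda-g)(x_0)}\omega^n\le(\omega+dd^c g)^n$ at $x_0$, giving directly $\varphi_\lambda-g\le C\lambda^{-1}$ everywhere and hence $\varphi_\lambda\le P_\omega(g)+C\lambda^{-1}$. This is a one-line fix.

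\medskip

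\textbf{Lower bound.} Your suggestion that one can ``run the domination principle on $\{\varphi_\lambda<P_\omega(g)-\varepsilon\}$ directly'' and ``avoid regularization altogether'' does \emph{not} work. On that set you only know $(\omega+dd^c\varphi_\lambda)^n\le e^{-\lambda\varepsilon}\omega^n$, but the domination principle (Theorem~\ref{thm: domination principle}) requires a bound of the form $\omega_{\varphi_\lambda}^n\le c\,\omega_v^n$ with $c<1$, and $(\omega+dd^c P_\omega(g))^n$ may vanish on a set of positive Lebesgue measure (indeed, by what you are trying to prove, it is concentrated on the contact set). So some strict positivity must be injected into the comparison function. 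The paper does this cleanly and without any smoothing: it sets
\[
u:=\bigl(1-\lambda^{-1}\bigr)P_\omega(g)+\lambda^{-1}\inf_X g-n\lambda^{-1}\log\lambda,
\]
so that $\omega+dd^c u\ge\lambda^{-1}\omega$ and hence $(\omega+dd^c u)^n\ge\lambda^{-n}\omega^n$, while the additive constants are chosen so that $\lambda^{-n}\ge e^{\lambda(u-g)}$. This gives $(\omega+dd^c u)^n\ge e^{\lambda(u-g)}\omega^n$, and Corollary~\ref{cor:MAexp a} (or the domination principle as you outline) yields $\varphi_\lambda\ge u$, hence $\varphi_\lambda\ge P_\omega(g)-C\lambda^{-1}\log\lambda$. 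This convex-combination trick is exactly the ``regularization'' you anticipate in option (a), made explicit; option (b) is a dead end.

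\medskip

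Minor remark: since $g$ is smooth, $P_\omega(g)\le g$ holds everywhere (no upper-semicontinuous regularization is needed), so the pluripolar caveat in your last paragraph is unnecessary here.
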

   The above result is due to Berman \cite{Ber19} (see also Chu-Zhou \cite{CZ19} for an adaptation of Berman's proof in the Hermitian case).
   \begin{proof}
First of all, observe that by Section \ref{existence pos case}, $\varphi_\lambda$ is smooth. Let $x_0\in X$ be a point where $\varphi_{\lambda}-g$ attains its maximum over $X$. By the classical maximum principle, we have $e^{\lambda(\varphi_{\lambda}-g)} \omega^n \leq (\omega+dd^c g)^n$ at $x_0$. It then follows that $\varphi_{\lambda} -g \leq  \varphi_{\lambda} -g (x_0)\leq \lambda^{-1} \log \frac{\omega_g^n}{\omega^n}|_{x_0}:= C\lambda^{-1}$, for a uniform constant $C$, which yields $\varphi_{\lambda} \leq P_{\omega}(g)+ C\lambda^{-1}$. Now, the function 
       \[
       u= (1-\lambda^{-1})P_{\omega}(g) + \lambda^{-1} \inf_X g  - n \lambda^{-1}\ln \lambda
       \]
     satisfies 
       \begin{equation}\label{ineq_env}
       (\omega+dd^c u)^n \geq e^{\lambda(u-g)}\omega^n. 
       \end{equation}
       Indeed, 
       $$ (\omega+dd^c u)^n = (\omega+dd^c(1-\lambda^{-1})P_{\omega}(g))^n \geq \lambda^{-n} \omega^n$$
       and one can check that $\lambda^{-n} \geq e^{\lambda (u-g)}$.
       In particular, \eqref{ineq_env} implies that for any $\varepsilon>0$, $${\bf 1}_{\{\varphi_\lambda <u-\varepsilon\}} \omega_{\varphi_\lambda}^n = {\bf 1}_{\{\varphi_\lambda <u-\varepsilon\}} e^{\lambda (\varphi-g)}\omega^n  \leq {\bf 1}_{\{\varphi_\lambda <u-\varepsilon\}}  e^{\lambda (u-\varepsilon-g)}\omega^n \leq e^{-\lambda \varepsilon}  {\bf 1}_{\{\varphi_\lambda <u-\varepsilon\}}\omega_u^n $$
    The domination principle Theorem \ref{thm: domination principle} (applied with $c=e^{-\lambda \varepsilon}<1$) then yields $\varphi_{\lambda}\geq u-\varepsilon$. Sending $\varepsilon$ to zero we get $\varphi_{\lambda}\geq u$.
    
    This allows to conclude that $\varphi_{\lambda}$ uniformly converge to $P_{\omega}(g)$ as $\lambda \to +\infty$. In particular \cite[Theorem 4.26]{GZbook} ensures that the Monge-Amp\`ere measure $(\omega+dd^c \varphi_\lambda)^n$ weakly converges to $(\omega+dd^c P_{\omega}(g))^n$. The Laplacian estimate (Theorem \ref{thm: Lap est}) ensures that $\Delta_{\omega}\varphi_{\lambda}$ is  bounded by a uniform constant, hence so is $\Delta_{\omega}P_{\omega}(g)$. For a fixed $\varepsilon>0$ (and $\lambda$ big enough), we also have 
    \[
    (\omega+dd^c \varphi_{\lambda})^n (P_{\omega}(g) < g -2\varepsilon) \leq (\omega+dd^c \varphi_{\lambda})^n (\varphi_{\lambda} < g -\varepsilon) \leq \int_X e^{-\varepsilon \lambda}\omega^n. 
    \]
    Letting $\lambda\to +\infty$ and then $\varepsilon\to 0$, we arrive at $(\omega+dd^c P_{\omega}(g))^n(P_{\omega}(g)<g)=0$, finishing the proof. 
   \end{proof}

\begin{prop}
    If $f$ is quasi lower-semicontinuous on $X$, then $(\omega+dd^c P_{\omega}(f))^n$ is supported on the contact set $\{P_{\omega}(f)=f\}$.
\end{prop}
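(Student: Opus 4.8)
The plan is to reduce the statement, through a chain of monotone (and capacitary) approximations, to the smooth case just established. At each step the orthogonality of an envelope $P_\omega(h)$ is recorded in the equivalent integral form
$\int_X \bigl(1-e^{P_\omega(h)-h}\bigr)(\omega+dd^c P_\omega(h))^n=0$, and one passes to the limit in this identity, controlling the exceptional pluripolar sets via the bound $\idd_E(\omega+dd^c P_\omega(h))^n\le A^n\capa(E)$ valid for any bounded $\omega$-psh potential. The scheme closely parallels the proof of the local orthogonal relation, Theorem \ref{thm: orthog relation}.

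\emph{Step 1 (continuous $f$).} If $f$ is continuous, pick smooth $g_j$ with $\|g_j-f\|_\infty\le 2^{-j-1}$ and set $h_j:=g_j+2^{-j}$; then $h_j$ is smooth, $h_j>f$, and $h_j\searrow f$. Since $h_j$ decreases, so does $P_\omega(h_j)$, to some bounded $\omega$-psh $v$ with $v\le\inf_j h_j=f$, whence $v\le P_\omega(f)\le v$ and $P_\omega(h_j)\searrow P_\omega(f)$; by Dini's theorem $h_j\to f$ uniformly, and as envelopes are $1$-Lipschitz for the uniform norm, $P_\omega(h_j)\to P_\omega(f)$ uniformly. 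The previous theorem, applied to the smooth $h_j$, gives $\int_X(1-e^{P_\omega(h_j)-h_j})(\omega+dd^c P_\omega(h_j))^n=0$, and Theorem \ref{thm: BT convergence} gives $(\omega+dd^c P_\omega(h_j))^n\rightharpoonup(\omega+dd^c P_\omega(f))^n$. The integrands converge uniformly to the nonnegative lower semicontinuous function $1-e^{P_\omega(f)-f}$ and the masses stay bounded, so Lemma \ref{lem: semicontinuity} lets us conclude $\int_X(1-e^{P_\omega(f)-f})(\omega+dd^c P_\omega(f))^n=0$.

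\emph{Steps 2--3 (lsc, then quasi-lsc $f$).} If $f$ is lsc and bounded, choose an increasing sequence of continuous functions $\gamma_m\le f$ with $\bigl(\sup_m P_\omega(\gamma_m)\bigr)^*=P_\omega(f)$ (such a sequence exists because the family of continuous functions $\le f$ is upward directed, $X$ is second countable, and one may pass to suprema $\max(\gamma_1,\dots,\gamma_m)$). Then $P_\omega(\gamma_m)\nearrow P_\omega(f)$ quasi everywhere, monotone convergence implies convergence in capacity, and \cite[Theorem 4.26]{GZbook} then gives $(\omega+dd^c P_\omega(\gamma_m))^n\rightharpoonup(\omega+dd^c P_\omega(f))^n$; passing to the limit in the identity of Step 1 for the continuous $\gamma_m$ yields the statement for lsc $f$. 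Finally, for quasi-lsc $f$, \cite[Lemma 2.4]{GLZ19} gives a decreasing sequence of lsc functions $f_j\searrow g$ converging to $f$ in capacity and quasi everywhere, with $g=f$ off a pluripolar set $E$; by \cite[Proposition 2.2]{GLZ19}, $P_\omega(f_j)\searrow P_\omega(g)$, and the compact analogue of Lemma \ref{qe_envelope} gives $P_\omega(g)=P_\omega(f)$. Using Step 2 for each $f_j$, the bound $\idd_E(\omega+dd^c P_\omega(f_j))^n=\idd_E(\omega+dd^c P_\omega(f))^n=0$ to pass freely between $f_j$, $g$ and $f$, and Theorem \ref{thm: BT convergence}, one obtains $\int_X(1-e^{P_\omega(f)-f})(\omega+dd^c P_\omega(f))^n=0$, i.e. $(\omega+dd^c P_\omega(f))^n$ is carried by $\{P_\omega(f)=f\}$.

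\emph{Main difficulty.} Step 1 is routine. The delicate part is the limit passage in Steps 2 and 3: the integrands $1-e^{P_\omega(\gamma_m)-\gamma_m}$ and $1-e^{P_\omega(f_j)-f}$ are not monotone, and in Step 3 only quasi lower semicontinuous, so Lemma \ref{lem: semicontinuity} does not apply verbatim; one must argue with the monotone sequences of psh potentials and the quasi-continuity of $\omega$-psh functions together with the capacity estimate annihilating the pluripolar exceptional set, exactly as in the proof of Theorem \ref{thm: orthog relation}. A secondary technical point is the compact version of Lemma \ref{qe_envelope} on $X$, which rests on the existence of an $\omega$-psh function whose $-\infty$-locus contains a prescribed pluripolar set.
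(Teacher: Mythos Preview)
Your overall scheme matches the paper's one-line proof (``by approximation and by using the above theorem''), and Steps~1 and~3 are essentially sound: Step~3 is exactly the global transcription of the quasi-lsc part of Theorem~\ref{thm: orthog relation}, and it goes through because the approximation $f_j\searrow g$ is \emph{decreasing}, so $1-e^{P_\omega(f_j)-g}\le 1-e^{P_\omega(f_j)-f_j}$ and the capacity bound kills the pluripolar remainder.

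The gap is in Step~2. You approximate an lsc $f$ \emph{from below} by continuous $\gamma_m$, and then want to pass to the limit in $\int_X(1-e^{P_\omega(\gamma_m)-\gamma_m})\,d\mu_m=0$. But the mechanism of Theorem~\ref{thm: orthog relation} that you invoke works only for decreasing barriers: with $\gamma_m\nearrow f$ the crucial inequality $1-e^{P_\omega(\gamma_m)-f}\le 1-e^{P_\omega(\gamma_m)-\gamma_m}$ goes the wrong way, and neither Lemma~\ref{lem: semicontinuity} nor the capacity estimate lets you conclude that $\mu(\{P_\omega(f)<f-\varepsilon\})=0$. Concretely, one shows easily (Dini for usc functions) that $P_\omega(\gamma_m)\to P_\omega(f)$ uniformly, so on the support of $\mu_m$ one has $\gamma_m=P_\omega(\gamma_m)\in[P_\omega(f)-\delta_m,P_\omega(f)]$; but this only forces $f-\gamma_m>\varepsilon$ on $V_\varepsilon\cap\mathrm{supp}\,\mu_m$, and there is no reason for $\mu_m(\{f-\gamma_m>\varepsilon\})\to 0$ without uniform convergence $\gamma_m\to f$, which fails for general lsc $f$. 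Your reference ``exactly as in the proof of Theorem~\ref{thm: orthog relation}'' is therefore misplaced: in that theorem the lsc case is \emph{not} done by approximation at all, but by balayage.

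The fix is to treat the lsc case by the same local balayage argument, transported to $X$: the set $U=\{P_\omega(f)<f\}$ is open; near $x_0\in U$ pick a chart with $\omega=dd^c\psi$ and a small ball $B$ on which $P_\omega(f)+\psi<\inf_B(f+\psi)-\varepsilon$ (using usc of $P_\omega(f)$, lsc of $f$, continuity of $\psi$); balayage $P_\omega(f)+\psi$ in $B$ to get $v$ psh with $(dd^c v)^n=0$; the maximum principle gives $v\le f+\psi$ in $B$, so gluing $v-\psi$ with $P_\omega(f)$ produces a global $\omega$-psh competitor $\le f$, forcing $v-\psi=P_\omega(f)$ and hence $(\omega+dd^c P_\omega(f))^n=0$ in $B$. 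Once the lsc case is settled this way, your Step~3 finishes the proof. (Minor remark on Step~1: with your choice $h_j=g_j+2^{-j}$ the sequence need not be monotone; either take $h_j=g_j+2\cdot 4^{-j}$ with $\|g_j-f\|_\infty\le 4^{-j}$, or simply drop monotonicity and use the $1$-Lipschitz property of $P_\omega$ for the sup norm, which you already note.)
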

\begin{proof}
    By approximation and by using the above theorem. 
\end{proof}

\subsection{The case $\lambda=0$}
\begin{theorem}\label{thm: lambda 0}
    Given a smooth function $f$, there exists a unique constant $b$ and a unique smooth solution $\varphi \in \PSH(X,\omega)$ of the equation 
    \[
    (\omega+dd^c \varphi)^n =e^{f+b} \omega^n, \; \sup_X \varphi=0.  
    \]
\end{theorem}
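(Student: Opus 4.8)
The plan is to obtain $\varphi$ and $b$ as limits, when $\lambda\to 0^+$, of the solutions of \eqref{eq:MA} with parameter $\lambda>0$. The continuity method of Section~\ref{existence pos case} applies verbatim to every $\lambda>0$ (its a priori estimates, Theorem~\ref{Linfty est} and Theorem~\ref{thm: Lap est}, being uniform in $\lambda$), so for each $\lambda>0$ there is a smooth $\varphi_\lambda\in\PSH(X,\omega)$ solving $(\omega+dd^c\varphi_\lambda)^n=e^{\lambda\varphi_\lambda+f}\omega^n$. Write $c_\lambda:=\sup_X\varphi_\lambda$, $\tilde\varphi_\lambda:=\varphi_\lambda-c_\lambda$ and $h_\lambda:=\lambda c_\lambda+f$, so that $\sup_X\tilde\varphi_\lambda=0$ and $(\omega+dd^c\tilde\varphi_\lambda)^n=e^{\lambda\tilde\varphi_\lambda+h_\lambda}\omega^n$.

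First I would bound $\lambda c_\lambda$ independently of $\lambda$: at a point where $\varphi_\lambda$ attains its maximum one has $dd^c\varphi_\lambda\le 0$, so $e^{\lambda c_\lambda+f}\le 1$ there and $\lambda c_\lambda\le-\inf_X f$; at a minimum point $dd^c\varphi_\lambda\ge 0$, so $e^{\lambda\inf_X\varphi_\lambda+f}\ge 1$ there and $\lambda c_\lambda\ge\lambda\inf_X\varphi_\lambda\ge-\sup_X f$. Hence $h_\lambda$ is bounded in every $C^k(X)$, uniformly in $\lambda$. Since $\tilde\varphi_\lambda\le 0$, the density $e^{\lambda\tilde\varphi_\lambda+h_\lambda}$ is bounded above uniformly in $\lambda$, so Theorem~\ref{Linfty est} gives $\mathrm{osc}_X\tilde\varphi_\lambda\le C$, i.e.\ $-C\le\tilde\varphi_\lambda\le 0$. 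Plugging this into Theorem~\ref{thm: Lap est} (with $g\equiv 0$ and datum $h_\lambda$, using that the constant there is independent of $\lambda$) gives $\Delta_\omega\tilde\varphi_\lambda\le C$ uniformly, and the higher order estimates of Section~\ref{sec:higher est} then bound $\|\tilde\varphi_\lambda\|_{C^{k,\alpha}(X)}$ uniformly in $\lambda$ for all $k,\alpha$. \emph{This is the heart of the matter}: the oscillation bound is painless, but the Laplacian bound would degenerate as $\lambda\to 0$ were it not for the $\lambda$-independence built into Theorem~\ref{thm: Lap est} together with the boundedness of the effective smooth datum $h_\lambda$.

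Next I would pass to the limit. By the Ascoli--Arzel\`a theorem, along a sequence $\lambda_j\to 0^+$ the functions $\tilde\varphi_{\lambda_j}$ converge in $C^\infty(X)$ to some $\varphi\in\PSH(X,\omega)$ with $\sup_X\varphi=0$; passing to a further subsequence, $\lambda_j c_{\lambda_j}\to b$ for some constant $b$ (bounded by the previous step). Since $\lambda_j\tilde\varphi_{\lambda_j}\to 0$ uniformly, letting $j\to+\infty$ in the equation for $\tilde\varphi_{\lambda_j}$ yields $(\omega+dd^c\varphi)^n=e^{f+b}\omega^n$; the right-hand side being strictly positive, $\omega+dd^c\varphi>0$, so $\varphi$ is a strictly $\omega$-psh smooth solution.

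Finally, uniqueness. If $(\omega+dd^c\varphi_i)^n=e^{f+b_i}\omega^n$ for $i=1,2$, then $(\omega+dd^c\varphi_1)^n=e^{b_1-b_2}(\omega+dd^c\varphi_2)^n$, so Corollary~\ref{cor:MAconstant a} forces $e^{b_1-b_2}\ge 1$ and, by symmetry, $b_1=b_2$. With $b$ fixed, any two normalized solutions solve the same equation $(\omega+dd^c\varphi)^n=e^{f+b}\omega^n$ with smooth right-hand side; by the uniqueness of smooth solutions established earlier (the strong maximum principle applied to $dd^c(\varphi_1-\varphi_2)\wedge T=0$ with $T=\sum_{k=0}^{n-1}\omega_{\varphi_1}^k\wedge\omega_{\varphi_2}^{n-1-k}$) their difference is constant, hence $0$ since $\sup_X\varphi_1=\sup_X\varphi_2=0$.
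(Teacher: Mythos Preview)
Your proof is correct and reaches the same conclusion, but the route differs from the paper's in two places worth noting.

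First, for the two-sided bound on $\lambda c_\lambda$ you use the elementary maximum/minimum principle applied pointwise to $\varphi_\lambda$; this is cleaner than what the paper does, which obtains the upper bound by passing to a Gauduchon metric and applying the AM--GM inequality to the mixed form $(\omega+dd^c\varphi_j)\wedge\omega_g^{n-1}$, and the lower bound via the subsolution from Lemma~\ref{lem: subsol} together with Corollary~\ref{cor:MAconstant a}. Your argument avoids both auxiliary constructions.

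Second, having uniform $C^{k,\alpha}$ bounds in hand, you pass to the limit directly in $C^\infty$ via Ascoli--Arzel\`a and read off the equation for the limit. The paper instead first extracts only an $L^1$/a.e.\ limit $\varphi$, sandwiches the sequence between the decreasing usc envelopes $u_j=(\sup_{k\ge j}\psi_k)^*$ and the increasing psh envelopes $v_j=P_\omega(\inf_{k\ge j}\psi_k)$, and then uses the domination principle to force $u=v=\varphi$ and identify the Monge--Amp\`ere measure; smoothness is added on only at the end. Your shortcut is legitimate here because $f$ is smooth and the Laplacian estimate of Theorem~\ref{thm: Lap est} is $\lambda$-independent. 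The paper's more elaborate envelope argument, however, is the one that survives when the density is merely $L^p$ (as in Theorem~\ref{MA zero case}), where no higher-order a priori estimates are available; that is presumably why the authors present it already in the smooth case.
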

\begin{proof}
    For each $j$ let $\varphi_j$ solve
    \begin{equation}\label{eq appr}
    (\omega+dd^c \varphi_j)^n= e^{j^{-1}\varphi_j+f}\omega^n. 
    \end{equation}
    Let $\omega_g$ be the unique Gauduchon metric in the conformal class of $\omega$ and write $\omega_g^n=e^h \omega^n$, for some smooth function $h$. The AM-GM inequality gives the mixed Monge-Amp\`ere inequality 
    \[
    (\omega+dd^c \varphi_j) \wedge \omega_g^{n-1} \geq \exp \left (\frac{\varphi_j}{nj} + \frac{f}{n} + \frac{(n-1)h}{n} \right) \omega^n.  
    \]
   Setting $\psi_j= \varphi_j -\sup_X \varphi_j$, and integrating the above over $X$, we obtain 
   \begin{equation}\label{unif bdd sup}
   \exp \left( \frac{\sup_X\varphi_j}{nj}  \right ) \leq C.
   \end{equation}
   Here, we have used the fact that $\int_X  (\omega+dd^c \varphi_j) \wedge \omega_g^{n-1} = \int_X  \omega \wedge \omega_g^{n-1}$ (since $\omega_g$ is Gauduchon) and that the functions $\psi_j/j$ converge in $L^1$ to $0$. It thus follows that $\varphi_j/j$ is uniformly bounded from above. We rewrite the equation \eqref{eq appr} as 
   \[
   (\omega+dd^c \psi_j)^n =e^{j^{-1}\psi_j + b_j+ f}\omega^n = e^{\psi_j+f_j}\omega^n,
   \]
   where $f_j = -\psi_j + f + j^{-j}\psi_j + b_j$, and $b_j=j^{-1}\sup_X \varphi_j$. 
   Now, $b_j\leq C$ is uniformly bounded above by \eqref{unif bdd sup} and by the $L^{\infty}$ estimate of Theorem \ref{Linfty est},  $\psi_j$ is uniformly bounded since the $L^p$ norm of the right-hand side, which is $e^{j^{-1}\psi_j+b_j+f}$, is uniformly bounded. Also, by Lemma \ref{lem: subsol}, there exists $m>0$ and a bounded function $\eta\in \PSH(X,\omega)$ such that $(\omega+dd^c \eta)^n \geq m e^{f}\omega^n$. We stress that $m$ depends on $\|e^f\|_p$, $p>1$. We then find $\omega_{\psi_j}^n \leq e^{b_j-\log m} \omega_{\eta}^n$, and Corollary \ref{cor:MAconstant a} gives $b_j-\log m\geq 0$, hence $b_j$ is uniformly bounded. 
   
  Extracting a subsequence, we can assume that $\psi_j$ converges in $L^1$ and almost everywhere to $\varphi\in \PSH(X,\omega)$ and $b_j\to b\in \mathbb{R}$. Define 
   \[
   u_j := \left (\sup_{k\geq j} \psi_k\right )^*, \; v_j := P_{\omega}\left(\inf_{k\geq j} \psi_k\right).
   \]
   Recall that
   \[
   (\omega+dd^c \psi_j)^n = e^{\psi_j+f_j}\omega^n,
   \]
   where $f_j = -\psi_j + f + j^{-j}\psi_j + b_j$, and $b_j=j^{-1}\sup_X \varphi_j$. 
   Then $(u_j), (v_j)$ are sequences of $\omega$-psh functions such that $v_j\leq \psi_j\leq u_j$ and $u_j \searrow u$, $v_j\nearrow v$. In particular $u\geq v$ and using \cite[Lemma 2.8 and 2.9]{DDL4}, one finds,
   
   \[
   (\omega+dd^c u)^n \geq e^{u -\varphi + f+b}\omega^n, \; (\omega+dd^c v)^n \leq e^{v-\varphi + f+b}\omega^n.
   \]

   For any $\varepsilon>0$, we then find 
   $$\idd_{\{v< u-\varepsilon\}} \omega_v^n \leq \idd_{\{v< u-\varepsilon\}} e^{v+f+b}\omega^n \leq  \idd_{\{v< u-\varepsilon\}} e^{u-\varepsilon+f+b}\omega^n=  e^{-\varepsilon}\idd_{\{v< u-\varepsilon\}} \omega_u^n$$
   The domination principle (Theorem \ref{thm: domination principle}) yields $v\geq u-\varepsilon$ for all $\varepsilon>0$, hence $v\geq u$. Since $v\leq \varphi \leq u$ we arrive at $u=v=\varphi$ and $ (\omega+dd^c \varphi)^n = e^{\varphi + f+b}\omega^n $.
   
   Moreover, the a priori estimates ensure that $\psi_j$ is uniformly bounded in $C^{k,\alpha}$-norm for all $k,\alpha$; thus  $\varphi$ is smooth. 
\end{proof}
\begin{theorem}\label{MA zero case}
    Assume $0\leq f\in L^p(X)$ for some $p>1$ and $\int_X f \omega^n>0$. Then there exist $\varphi \in \PSH(X,\omega)\cap L^{\infty}$ and a unique constant $c>0$ solving 
    \[
    (\omega +dd^c \varphi)^n =cf \omega^n. 
    \]
\end{theorem}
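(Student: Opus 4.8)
\emph{Plan.} I would produce the pair $(c,\varphi)$ by smooth approximation, reading off $c$ as a limit of the constants supplied by Theorem~\ref{thm: lambda 0}, and then pass to the limit in the Monge--Amp\`ere operator by squeezing the approximants between monotone rearrangements. Concretely, first pick smooth strictly positive $f_j$ with $f_j\to f$ in $L^p(X)$, and (after passing to a subsequence) arrange $f_j\to f$ a.e.\ and $\sup_j f_j\in L^p(X)$. Theorem~\ref{thm: lambda 0} applied to the smooth datum $\log f_j$ gives a unique $b_j$ and a smooth $\varphi_j\in\PSH(X,\omega)$ with $\sup_X\varphi_j=0$ and $(\omega+dd^c\varphi_j)^n=c_jf_j\,\omega^n$, where $c_j:=e^{b_j}$. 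I would then bound $c_j$ uniformly: testing against $\omega_g^{n-1}$ for the Gauduchon metric $\omega_g$ (with $\omega_g^n=e^h\omega^n$) and using the AM--GM inequality as in the proof of Theorem~\ref{thm: lambda 0} yields $\int_X\omega\wedge\omega_g^{n-1}\ge c_j^{1/n}\int_X f_j^{1/n}e^{(1-1/n)h}\omega^n$, so $c_j\le M$ since $\int_X f^{1/n}e^{(1-1/n)h}\omega^n>0$; for the lower bound, Lemma~\ref{lem: subsol} applied to $f_j/\|f_j\|_p$ gives $\eta_j\in\PSH(X,\omega)$ with $-1\le\eta_j\le0$ and $(\omega+dd^c\eta_j)^n\ge m\|f_j\|_p^{-1}f_j\,\omega^n$, whence $(\omega+dd^c\varphi_j)^n\le(\|f_j\|_p c_j/m)(\omega+dd^c\eta_j)^n$ and Corollary~\ref{cor:MAconstant a} forces $c_j\ge m/\|f_j\|_p\ge\mu_0>0$. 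In particular $\|c_jf_j\|_p$ is bounded, so Theorem~\ref{Linfty est} yields $-C\le\varphi_j\le0$ uniformly.

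Next I would extract a subsequence along which $c_j\to c\in[\mu_0,M]$ and $\varphi_j\to\varphi$ in $L^1$ and a.e., with $\varphi\in\PSH(X,\omega)$, $-C\le\varphi\le0$. The key device is to rewrite the equations as $(\omega+dd^c\varphi_j)^n=e^{\varphi_j}\rho_j\,\omega^n$ with $\rho_j:=e^{-\varphi_j}c_jf_j$; since $e^{-\varphi_j}c_j$ is uniformly bounded and converges a.e.\ to $e^{-\varphi}c$, we get $\rho_j\to\rho:=e^{-\varphi}cf$ in $L^p$. Setting $u_j:=(\sup_{k\ge j}\varphi_k)^*$ and $v_j:=P_\omega(\inf_{k\ge j}\varphi_k)$, these are uniformly bounded $\omega$-psh functions with $v_j\le\varphi_j\le u_j$, $u_j\searrow u$, $v_j\nearrow v$, and $v\le\varphi\le u$. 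Arguing as in the proof of Theorem~\ref{thm: lambda 0}, in particular via \cite[Lemma~2.8 and 2.9]{DDL4} (whose arguments carry over to $L^p$ densities, since a family of $L^p$ densities with bounded norm is uniformly absolutely continuous with respect to the Monge--Amp\`ere capacity), I expect to obtain $(\omega+dd^c u)^n\ge e^{u}\rho\,\omega^n=cf\,e^{u-\varphi}\omega^n$ and $(\omega+dd^c v)^n\le e^{v}\rho\,\omega^n=cf\,e^{v-\varphi}\omega^n$.

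From here the argument closes. Since $u\ge\varphi$, on $\{v<u-\delta\}$ we have $e^{v-\varphi}\le e^{-\delta}e^{u-\varphi}$, hence ${\bf 1}_{\{v<u-\delta\}}(\omega+dd^c v)^n\le e^{-\delta}\,{\bf 1}_{\{v<u-\delta\}}(\omega+dd^c(u-\delta))^n$, and the domination principle (Theorem~\ref{thm: domination principle}, with constant $e^{-\delta}<1$) gives $v\ge u-\delta$; letting $\delta\to0$ yields $v\ge u$, so $u=v=\varphi$. Thus $\varphi_j$ is squeezed between the monotone sequences $v_j\nearrow\varphi$ and $u_j\searrow\varphi$, both converging to $\varphi$ in capacity, so $\varphi_j\to\varphi$ in capacity; hence $(\omega+dd^c\varphi_j)^n\to(\omega+dd^c\varphi)^n$ weakly, while $(\omega+dd^c\varphi_j)^n=c_jf_j\,\omega^n\to cf\,\omega^n$ weakly, so $(\omega+dd^c\varphi)^n=cf\,\omega^n$ with $\varphi\in\PSH(X,\omega)\cap L^\infty$. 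For uniqueness of $c$: if $\varphi'$ solves the equation with a constant $c'$ and, say, $c\le c'$, then $(\omega+dd^c\varphi)^n=(c/c')(\omega+dd^c\varphi')^n$ and Corollary~\ref{cor:MAconstant a} forces $c/c'\ge1$, so $c=c'$; and $c>0$ because $\int_X(\omega+dd^c\varphi)^n>0$ by Corollary~\ref{cor: dom princ} while $\int_X(\omega+dd^c\varphi)^n=c\int_X f\,\omega^n$ with $\int_X f\,\omega^n>0$.

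The main obstacle is the passage to the limit in the nonlinear operator: $L^1$-convergence of the $\varphi_j$ is far too weak, and the whole point of reinstating the $e^{\varphi_j}$ factor (while keeping the remaining density in $L^p$) is to make the \emph{strict} form of the domination principle applicable to the monotone rearrangements, which is what ultimately forces convergence in capacity. A secondary point requiring care is keeping the normalizing constants $c_j$ bounded away from $0$ and $\infty$.
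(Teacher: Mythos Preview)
Your proof is correct and follows essentially the same strategy as the paper's, with one organizational difference worth noting. The paper approximates $f$ by smooth strictly positive $f_j$ and solves the \emph{perturbed} equations $(\omega+dd^c\varphi_j)^n=e^{j^{-1}\varphi_j}f_j\,\omega^n$ directly (via the $\lambda>0$ existence from Section~5.2), then invokes verbatim the limiting machinery already developed in the proof of Theorem~\ref{thm: lambda 0}; in particular the $e^{j^{-1}\varphi_j}$ factor is built in from the start. You instead use Theorem~\ref{thm: lambda 0} as a black box to produce the constants $c_j$ and smooth solutions $\varphi_j$ of $(\omega+dd^c\varphi_j)^n=c_jf_j\,\omega^n$, and then \emph{artificially} reinsert the exponential by writing the right-hand side as $e^{\varphi_j}\rho_j\,\omega^n$ with $\rho_j=e^{-\varphi_j}c_jf_j\in L^p$. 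Both routes land on the same monotone-rearrangement squeeze $(u_j,v_j)$ and the same application of the domination principle. Your version is a bit more modular (it treats Theorem~\ref{thm: lambda 0} as a finished tool rather than re-running its proof) at the price of having to check that the DDL4 lemmas apply with $L^p$ rather than bounded densities, which you correctly flag. Your explicit bounds on $c_j$ and your uniqueness argument for $c$ are also the same as what the paper uses (Gauduchon trick for the upper bound, Lemma~\ref{lem: subsol} plus Corollary~\ref{cor:MAconstant a} for the lower bound).
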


In the K\"ahler case, integrating both sides of the above equation reveals
\[
c\int_X f \omega^n = \int_X (\omega+dd^c \varphi)^n = \int_X \omega^n,
\]
which means that the constant $c$ can be computed explicitly from $f$ and $\omega$. In the non-K\"ahler case, $c$ is also an unknown to be solved. 

\begin{proof}
We take convolution with smoothing kernels and we define $f_j:= f\star \rho_{1/j} +j^{-1}>0$ and we observe that $\|f_j\|_p$ is controlled by $\|f\|_p$. For each $j$, we consider $\varphi_j$, solution of 
$$(\omega+dd^c \varphi_j)^n= e^{j^{-1} \varphi_j + \log f_j} \omega^n.$$
By the exact the same arguments in the proof above we find that $\psi_j:=\varphi_j-\sup_X \varphi_j$ and $b_j=  j^{-1}{\sup_X \varphi_j}$ are uniformly bounded and $\psi_j$ converges in $L^1$ to $\varphi$ which solves
$$(\omega+dd^c \varphi)^n= e^{b} f \omega^n.$$
\end{proof}

\section{Degenerate reference forms}

Fix a semipositive smooth $(1,1)$-form $\theta$ on $X$ which is big, i.e. there exists $\rho \in \PSH(X,\theta)$ with analytic singularities such that $\theta +dd^c \rho \geq \delta \omega$, for some $\delta>0$. Recall that a qpsh function $\rho$ is said to have analytic singularities if there exists a constant $c>0$ such that locally on X,
$$\rho =c \log \sum_{j=1}^N |f_j|^2+g$$
where $g$ is bounded and $f_1,...,f_N$ are local holomorphic functions. Let $D$ be the analytic set such that $\rho$ is smooth in $\Omega:=X\setminus D$. We also normalize $\rho$ so that $\sup_X\rho=-1$.
\begin{lemma}\label{lem: subsol big}
	Assume $0\leq g \in L^p(X)$ and $\|g\|_p\leq 1$.  Then, there exists $u\in \PSH(X,\theta)$ such that $0\leq u\leq 1$, and $(\theta+dd^c u)^n \geq mgdV$ on $X$, where $m>0$ is a constant depending on $p$, $n$,  $\omega$.  
\end{lemma}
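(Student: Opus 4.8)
The plan is to bootstrap from the non-degenerate case, Lemma~\ref{lem: subsol}. First I would apply that lemma to the auxiliary Hermitian form $\omega$ itself: since $\|g\|_p\le 1$, it produces $v\in\PSH(X,\omega)$ with $-1\le v\le 0$ and $(\omega+dd^c v)^n\ge m_0\,g\,dV$ on $X$, for a uniform $m_0=m_0(p,n,\omega)>0$. Using the slack in the hypothesis $\theta+dd^c\rho\ge\delta\omega$, the function $\psi:=\tfrac12\rho+\tfrac\delta2 v$ is $\theta$-psh, because
\[
\theta+dd^c\psi=\Big(\tfrac12(\theta+dd^c\rho)-\tfrac\delta2\omega\Big)+\tfrac12\,\theta+\tfrac\delta2(\omega+dd^c v)
\]
is a sum of three semipositive currents. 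Raising this to the $n$-th power on $\Omega$ (where $\psi$ is locally bounded) and discarding the first two terms gives $(\theta+dd^c\psi)^n\ge(\tfrac\delta2)^n(\omega+dd^c v)^n\ge m'\,g\,dV$ on $\Omega$, with $m'=(\tfrac\delta2)^n m_0$; since $D$ is Lebesgue-null this bounds $(\theta+dd^c\psi)^n$ below by $m'\,g\,dV$ on all of $\Omega$. The defect is that $\psi\to-\infty$ along $D$.

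Next I would restore boundedness by truncation: fix a large constant $A$ and set $u_1:=\max(\psi,-A)$, a bounded $\theta$-psh function. By the maximum principle (Lemma~\ref{lem: max principle global}),
\[
(\theta+dd^c u_1)^n\ge {\bf 1}_{\{\psi\ge-A\}}(\theta+dd^c\psi)^n\ge m'\,g\,{\bf 1}_{\{\psi\ge-A\}}\,dV,
\]
and since $\psi\ge\tfrac12\rho-\tfrac\delta2$, the exceptional set $\{\psi<-A\}$ is contained in $W:=\{\rho<-2A+\delta\}$, a small open neighbourhood of $D$. On a fixed neighbourhood of $\overline{W}$ I would then rerun the construction of Lemma~\ref{lem: subsol}: cover it by coordinate balls $B_j\Subset B_j'$, solve the local Dirichlet problems $(dd^c v_j)^n={\bf 1}_{B_j'}g\,dV$ with boundary value $-1$ (uniformly bounded by Theorem~\ref{thm: Linfty estimate} and $\|g\|_p\le1$), and glue them into a bounded $\theta$-psh function $u_2$ which is constant off $\overline{W}$ and satisfies $(\theta+dd^c u_2)^n\ge m''\,g\,dV$ on $W$. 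Each $v_j$ is genuinely plurisubharmonic, hence $\theta$-psh since $\theta\ge0$; the only change from Lemma~\ref{lem: subsol} is that the smooth patching functions must be corrected by a small multiple of $\rho$ so that the glued $u_2$ stays $\theta$-psh — $\rho$ being strictly psh wherever $\theta$ degenerates, by $\theta+dd^c\rho\ge\delta\omega$ — this multiple being truncated to keep $u_2$ bounded.

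Finally I would average: $u:=\tfrac12(u_1+u_2)$. Then $\theta+dd^c u=\tfrac12(\theta+dd^c u_1)+\tfrac12(\theta+dd^c u_2)\ge0$, and since both halves are semipositive, $(\theta+dd^c u)^n\ge 2^{-n}(\theta+dd^c u_1)^n$ and $(\theta+dd^c u)^n\ge 2^{-n}(\theta+dd^c u_2)^n$. Combining the first bound over $X\setminus W$ and the second over $W$ gives $(\theta+dd^c u)^n\ge m\,g\,dV$ on all of $X$ with $m=2^{-n}\min(m',m'')$. Since $u/C$ and $u+c$ are again $\theta$-psh for any $C\ge1$ and $c\in\R$ (using $\theta\ge0$), and $(\theta+dd^c(u/C))^n\ge C^{-n}(\theta+dd^c u)^n$, an affine rescaling by $C:=\max(\mathrm{osc}_X u,1)$ places $u$ in $[0,1]$ at the cost of a worse but still uniform constant $m$.

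The delicate point — and the only genuine departure from Lemma~\ref{lem: subsol} — is the construction near $D$: the natural $\theta$-psh potential furnished by $\theta+dd^c\rho\ge\delta\omega$ is unbounded exactly along $D$, and $\theta$ may itself degenerate there, so the crude truncation $u_1$ loses the Monge–Amp\`ere lower bound on a neighbourhood of $D$. Recovering it forces one to redo the local ball argument near $D$ and to check carefully that the patching can be made $\theta$-psh while remaining bounded; this is where the positive slack $\theta+dd^c\rho\ge\delta\omega$ (and, if needed, the explicit analytic-singularity models of $\rho$) are genuinely used. Everything else is a routine adaptation of the non-degenerate proof.
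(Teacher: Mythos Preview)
Your construction of $u_2$ contains a genuine gap, and it is precisely the difficulty the lemma is designed to overcome. You propose to rerun the ball argument of Lemma~\ref{lem: subsol} near $D$, correcting the smooth patching data by a small multiple of $\rho$ so the glued function is $\theta$-psh, and then truncating that multiple to keep $u_2$ bounded. But for each fixed ball $B_j'$ the complement $X\setminus B_j'$ still meets $D$, and on that part of $D$ the only source of strict positivity is $dd^c\rho$; replacing $\varepsilon\rho$ by $\varepsilon\max(\rho,-M)$ makes it constant on $\{\rho<-M\}$, so the correction vanishes there and the glued function either fails to be $\theta$-psh or (if you truncate the whole thing instead) has Monge--Amp\`ere measure $\theta^n$, which may be zero along $D$. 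Either way you have reproduced the defect of $u_1$ on a smaller neighbourhood of $D$. No finite iteration of ``patch, correct with $\rho$, truncate'' closes this loop, and letting $A\to\infty$ in $u_1$ is not an option either, since the final rescaling to $[0,1]$ would drive $m\to 0$.

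The paper's proof is entirely different and avoids patching. Since $\rho$ is quasi-psh it lies in every $L^r$, so H\"older gives $|\rho|^{2n}g\in L^q$ for some $q>1$; Lemma~\ref{lem: subsol} then produces $v\in\PSH(X,\omega)\cap L^\infty$, $\sup_X v=-1$, with $(\omega+dd^c v)^n\ge c_1|\rho|^{2n}g\,\omega^n$. The function $\delta v+\rho\le -1$ is $\theta$-psh but unbounded; one sets $u:=-(\delta v+\rho)^{-1}=\chi\circ(\delta v+\rho)$ with $\chi(t)=-1/t$ convex increasing on $(-\infty,-1]$. Then $0\le u\le 1$ automatically, and since $0\le\chi'\le 1$ and $\theta\ge 0$,
\[
\theta+dd^c u\ \ge\ \chi'(\delta v+\rho)\bigl(\theta+dd^c(\delta v+\rho)\bigr)\ \ge\ \frac{\delta}{(\delta v+\rho)^{2}}(\omega+dd^c v),
\]
so $(\theta+dd^c u)^n\ge \delta^n|\delta v+\rho|^{-2n}c_1|\rho|^{2n}g\,\omega^n\ge m\,g\,\omega^n$, the last step using that $v$ is bounded. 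The weight $|\rho|^{2n}$ is inserted into the density at the outset precisely to cancel the factor $|\delta v+\rho|^{-2n}$ produced by $\chi'$; this anticipated compensation, not any local surgery, is what makes the bounded subsolution survive across $D$.
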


\begin{proof}
    Since $\rho$ is qpsh, it belongs to $L^r$ for any $r>1$. H\"older's inequality ensures
	that  $|\rho|^{2n} g \in L^q(\omega^n)$ for some $q>1$. 
	It follows from  Lemma \ref{lem: subsol} that there
	 exist a uniform $c_1>0$ and $v\in \PSH(X,\omega)\cap L^{\infty}(X)$ such that 
	$ \sup_X v =-1$ and 
	\[
	(\omega+dd^c v)^n \geq  c_1|\rho|^{2n} g\omega^n.
	\]
    Note that $c_1$ depends on $\| |\rho|\|_r$. Also the function $\delta v+\rho$ is $\theta$-psh since 
	$\theta+dd^c (\delta v+\rho) \geq \delta (\omega+dd^c v)$.
	Set $u:= -(\delta v+\rho)^{-1}=\chi \circ (\delta v+\rho)$ with $\chi(t)=-t^{-1}$ convex increasing  function from  $(-\infty, -1]\rightarrow [0,1]$. 
	Our normalizations ensure  $0 \leq u \leq 1$, and 
	 a direct computation   yields
	\[
	\theta+dd^c u \geq 
	\delta \chi' \circ (\delta v+\rho)   (\omega+dd^c v)  = \frac{\delta}{(\delta v+\rho)^{2}} (\omega +dd^c v).
	\]
	We infer $\omega_u^n \geq |\delta v+\rho|^{-2n} \delta^n c_1 |\rho|^{2n} g\omega^n$.
	 Since $v \leq -1$ is bounded  and $\rho\leq -1$, it follows that $\theta_u^n \geq m g\omega^n$ 
	 for some uniform constant $m>0$, finishing the proof.
\end{proof}

\begin{theorem}[Domination principle]\label{thm: domination principle big}
    Assume $u,v \in \PSH(X,\theta)\cap L^{\infty}$ satisfy
    \[
    \idd_{\{u<v\}}(\theta +dd^c u)^n\leq c \idd_{\{u<v\}} (\theta +dd^c v)^n,
    \]
    for some constant $0\leq c<1$. Then $u\geq v$. 
\end{theorem}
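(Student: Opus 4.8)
The plan is to repeat, essentially verbatim, the proof of the domination principle in the non-degenerate case (Theorem~\ref{thm: domination principle}); the one genuinely new point is that $\theta$ may vanish at the point where the relevant infimum is attained, so the local strictly plurisubharmonic bump used there is not directly available. Bigness of $\theta$, encoded by the function $\rho$ with $\theta+dd^c\rho\ge\delta\omega$ on $\Omega=X\setminus D$, is precisely what repairs this, once one forces the infimum to lie inside $\Omega$. To set up, add a common constant to $u$ and $v$ so that $v\ge1$ (this changes neither $\{u<v\}$ nor $\theta_u^n,\theta_v^n$); fix $a$ with $c<a^n<1$; it suffices to prove $u\ge av$ and then let $a\to1$. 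Since $D$ is analytic, hence pluripolar, and a quasi-plurisubharmonic function is determined off a pluripolar set, it is enough to prove $u\ge av$ on $\Omega$; and for that it suffices to prove $u\ge av+\varepsilon\rho$ on all of $X$ for every $\varepsilon\in(0,1-a)$, because $\varepsilon\rho\to0$ pointwise on $\Omega$ as $\varepsilon\to0^+$.

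Fix such an $\varepsilon$ and put $\tilde v:=av+\varepsilon\rho$; this is a convex combination of the $\theta$-psh functions $0,v,\rho$ (weights $1-a-\varepsilon,a,\varepsilon$), so $\tilde v\in\PSH(X,\theta)$, $\tilde v\le av$, and $\theta_{\tilde v}\ge a\theta_v+\varepsilon\theta_\rho\ge a\theta_v+\varepsilon\delta\omega$ on $\Omega$. Set $m:=\inf_X(u-\tilde v)$ and suppose, for contradiction, that $m<0$. Let $(x_j)$ be a minimizing sequence; after passing to a subsequence, $x_j\to x_0$, and necessarily $x_0\in\Omega$ — for if $x_0\in D$ then $\rho(x_j)\to-\infty$, forcing $u(x_j)-\tilde v(x_j)\to+\infty$, which is absurd. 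Now one mimics Theorem~\ref{thm: domination principle}: choose a coordinate ball $B=B(x_0,r)\Subset\Omega$ on which $\rho$ is smooth, let $\psi:=1-|z-x_0|^2/r^2$ (so $\psi=0$ on $\partial B$, $\psi(x_0)=1$, $0\le\psi\le1$), pick $s>0$ so small that $s<1-a$ and $\theta_{\tilde v}+s\,dd^c\psi\ge a\theta_v$ on $B$ (possible since $\theta_{\tilde v}-a\theta_v\ge\varepsilon\theta_\rho\ge\varepsilon\delta\omega$ on $B$), and put $\varphi:=\tilde v+s\psi$, a bounded $\theta$-psh function on $B$. Then on $B$ one has $\theta_\varphi\ge a\theta_v$ with $\theta_\varphi-a\theta_v$ a \emph{smooth} (positive) form, so $\theta_\varphi^n\ge a^n\theta_v^n$; and since $\rho\le-1$, $v\ge1$ and $s<1-a$, one has $\{u<\varphi\}\cap B\subseteq\{u<av+s\}\subseteq\{u<v\}$, whence on this set the hypothesis together with $c<a^n$ gives $\theta_u^n\le c\,\theta_v^n\le ca^{-n}\,\theta_\varphi^n\le\theta_\varphi^n$.

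It remains to reproduce the end of the argument of Theorem~\ref{thm: domination principle}. The maximum principle (Lemma~\ref{lem: max principle global}, applied on $B$) gives $\theta_u^n\le\theta_{\max(u,\varphi)}^n$ on $B$, so the comparison principle on the ball (Proposition~\ref{prop: CP}) yields $(u-\max(u,\varphi))(x)\ge\liminf_{z\to\partial B}(u-\max(u,\varphi))(z)$ for all $x\in B$. On $\partial B$ one has $\varphi=\tilde v$, so the right-hand side equals $\liminf_{z\to\partial B}\min(u-\tilde v,0)(z)\ge m$ (as $u-\tilde v\ge m$ on $X$ and $m<0$). Evaluating at $x=x_j$ (legitimate once $x_j\in B$) and letting $j\to\infty$, the left-hand side tends to $\min(m-s,0)=m-s$, hence $m-s\ge m$, a contradiction. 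Therefore $m\ge0$, i.e. $u\ge av+\varepsilon\rho$ on $X$; letting $\varepsilon\to0$ and then $a\to1$ completes the proof.

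I expect the only real obstacle to be the structural choice in the contradiction step: perturbing $v$ to $\tilde v=av+\varepsilon\rho$ so that $\inf_X(u-\tilde v)$ is attained over $\Omega=X\setminus D$, where $\theta+dd^c\rho$ is strictly positive and the bump construction of Theorem~\ref{thm: domination principle} can be carried out, while simultaneously keeping the contact set $\{u<\varphi\}$ inside $\{u<v\}$ — the inequalities $v\ge1$, $c<a^n$ and $s<1-a$ being exactly what is needed for that. Once this is arranged, everything reduces to the maximum and comparison principles already established, with the smooth positive form $\theta_\varphi-a\theta_v$ playing the role that a strictly positive reference form plays in the compact non-degenerate case.
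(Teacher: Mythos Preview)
Your proof is correct and follows the same strategy as the paper: perturb $v$ by $\varepsilon\rho$ so that the relevant infimum is forced into $\Omega=X\setminus D$, where $\theta_\rho\ge\delta\omega$ supplies the local strict positivity needed to rerun the bump argument of Theorem~\ref{thm: domination principle}. The paper's brief sketch phrases this as passing to the $\theta_\rho$-psh functions $u-\rho$, $v-\rho$ on $\Omega$ and reapplying Theorem~\ref{thm: domination principle} verbatim, whereas you keep $\theta$ as the reference form and absorb $\rho$ directly into $\tilde v=av+\varepsilon\rho$; this is a cosmetic reorganization rather than a different idea.
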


\begin{proof}
    Since the proof is very similar to that of Theorem \ref{thm: domination principle}, we only highlight the main steps. First, by replacing $v$ with $(1-\varepsilon)v+\varepsilon \rho$, we can assume that $u-v$ goes to $+\infty$ on $\partial \Omega$. Secondly, we write  
    \[
    (\theta +dd^c u)^n =(\theta_{\rho} + dd^c (u-\rho))^n, \quad \text{in}\; \Omega. 
    \]
    Now, the arguments of Theorem \ref{thm: domination principle} can be applied for $u-\rho$ and $v-\rho$, $\theta_{\rho}$ (instead of $u,v, \omega$). We leave the details to the interested reader. 
\end{proof}

We record the following direct consequences of the domination principle. 

\begin{corollary}\label{cor:MAconstant}
    Assume $u,v$ are bounded $\theta$-psh functions on $X$ such that 
    $$
    (\theta+dd^c u)^n \leq c (\theta+dd^c v)^n, 
    $$
    for some positive constant $c$. Then $c\geq 1$. 
\end{corollary}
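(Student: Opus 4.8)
\begin{proof}
The plan is to mimic the proof of Corollary \ref{cor:MAconstant a}, replacing the domination principle on $X$ with its degenerate analogue, Theorem \ref{thm: domination principle big}. Suppose, for contradiction, that $c<1$. Fix an arbitrary constant $C>0$ and apply the hypothesis to the pair $u$ and $v+C$: since $dd^c$ annihilates constants we have $(\theta+dd^c(v+C))^n=(\theta+dd^c v)^n$, so that
\[
\idd_{\{u<v+C\}}(\theta+dd^c u)^n \leq c\,\idd_{\{u<v+C\}}(\theta+dd^c v)^n = c\,\idd_{\{u<v+C\}}(\theta+dd^c (v+C))^n,
\]
with $0\leq c<1$. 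Theorem \ref{thm: domination principle big} then yields $u\geq v+C$ on $X$. As $C>0$ was arbitrary, this forces $\sup_X u=+\infty$, contradicting the assumed boundedness of $u$ (and of $v$). Hence $c\geq 1$.
\end{proof}

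There is essentially no obstacle here: the only point requiring a moment's care is the observation that the hypothesis $(\theta+dd^c u)^n\leq c(\theta+dd^c v)^n$ globally on $X$ certainly implies the restricted inequality on the set $\{u<v+C\}$, which is the exact form needed to invoke Theorem \ref{thm: domination principle big}. Everything else is a one-line contradiction argument identical in structure to Corollary \ref{cor:MAconstant a}.
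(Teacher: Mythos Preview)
Your proof is correct and follows exactly the approach the paper intends: the corollary is stated as a direct consequence of the domination principle (Theorem \ref{thm: domination principle big}), and your argument is the verbatim analogue of the proof of Corollary \ref{cor:MAconstant a} with $\omega$ replaced by $\theta$.
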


\begin{corollary}\label{cor:MAexp}
    Assume $u,v$ are bounded $\theta$-psh functions on $X$ such that 
    $$
    (\theta+dd^c u)^n \leq e^{\lambda(u-v)} (\theta+dd^c v)^n, 
    $$
    for some positive constant $\lambda>0$. Then $u\geq v$. 
\end{corollary}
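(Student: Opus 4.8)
The plan is to reduce this to the domination principle for the big class $\theta$, Theorem \ref{thm: domination principle big}, in exactly the same way that Corollary \ref{cor:MAexp a} was deduced from Theorem \ref{thm: domination principle} in the non-degenerate case. The exponential factor $e^{\lambda(u-v)}$ is not a constant, so it cannot be fed directly into the domination principle; the trick is to localize to a sublevel set $\{u<v-a\}$, on which the factor is bounded by the genuine constant $e^{-\lambda a}<1$.

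Concretely, fix $a>0$. On the Borel set $\{u<v-a\}$ one has $e^{\lambda(u-v)}\leq e^{-\lambda a}$, so the hypothesis gives
\[
\idd_{\{u<v-a\}}(\theta+dd^c u)^n \leq \idd_{\{u<v-a\}}\, e^{\lambda(u-v)}(\theta+dd^c v)^n \leq e^{-\lambda a}\,\idd_{\{u<v-a\}}(\theta+dd^c v)^n .
\]
Since $c:=e^{-\lambda a}<1$, Theorem \ref{thm: domination principle big} applied to the pair of bounded $\theta$-psh functions $u$ and $v-a$ yields $u\geq v-a$ on $X$. As $a>0$ was arbitrary, letting $a\to 0^+$ gives $u\geq v$.

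I expect no serious obstacle here; the whole content is the observation that after multiplying the assumed inequality by $\idd_{\{u<v-a\}}$ and using the pointwise bound $e^{\lambda(u-v)}\leq e^{-\lambda a}$ on that set, one obtains precisely the hypothesis of Theorem \ref{thm: domination principle big} with the admissible constant $c=e^{-\lambda a}<1$. Note that only the restriction of the assumed inequality to the sublevel set $\{u<v-a\}\subset\{u<v\}$ is ever used, which matches the localized form in which the domination principle is stated.
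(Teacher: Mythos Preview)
Your proof is correct and follows exactly the approach the paper intends: it is the verbatim adaptation of the proof of Corollary~\ref{cor:MAexp a} with Theorem~\ref{thm: domination principle big} replacing Theorem~\ref{thm: domination principle}. The paper itself omits the proof, merely recording the result as a direct consequence of the domination principle, so there is nothing further to compare.
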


We are now going to prove the following result:
\begin{theorem}\label{thm: sol big}
Assume $0\leq f\in L^p(X)$ for some $p>1$. Then there exist $\varphi\in \PSH(X,\theta)\cap L^{\infty}$ and a unique constant $c>0$ solving
\[
(\theta+dd^c \varphi)^n = cf \omega^n. 
\]
\end{theorem}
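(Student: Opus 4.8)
The plan is a double approximation: regularize the big form $\theta$ by the Hermitian metrics $\omega_\varepsilon:=\theta+\varepsilon\omega$, solve the resulting non-degenerate equations with the results of the previous sections, and pass to the limit $\varepsilon\to 0$. We may assume $f\not\equiv 0$, so $\int_X f\,\omega^n>0$. For $\varepsilon\in(0,1]$, $\omega_\varepsilon$ is a Hermitian metric with $\theta\le\omega_\varepsilon\le\theta+\omega\le A\omega$ for a fixed $A$. Running the proof of Theorem~\ref{MA zero case} with $\omega_\varepsilon$ in place of $\omega$ and with density $f\,\omega^n/\omega_\varepsilon^n$ (which lies in $L^p(X)$ because $\omega_\varepsilon\ge\varepsilon\omega$ forces $f\,\omega^n/\omega_\varepsilon^n\le\varepsilon^{-n}f$) produces $\varphi_\varepsilon\in\PSH(X,\omega_\varepsilon)\cap L^\infty$ and a constant $c_\varepsilon>0$ with $(\omega_\varepsilon+dd^c\varphi_\varepsilon)^n=c_\varepsilon f\,\omega^n$. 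Normalize $\sup_X\varphi_\varepsilon=0$ and set $\psi_\varepsilon:=\varphi_\varepsilon\in\PSH(X,A\omega)$.

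The next step is uniform estimates. First, $c_\varepsilon$ stays in a fixed interval $[m_0,C_0]\subset(0,\infty)$. For the upper bound, integrate against $\omega_G^{\,n-1}$, where $\omega_G=e^h\omega$ is the Gauduchon metric conformal to $\omega$: since $dd^c(\omega_G^{\,n-1})=0$, integration by parts gives $\int_X(\omega_\varepsilon+dd^c\psi_\varepsilon)\wedge\omega_G^{\,n-1}=\int_X\omega_\varepsilon\wedge\omega_G^{\,n-1}\le\int_X(\theta+\omega)\wedge\omega_G^{\,n-1}$, while the AM--GM (mixed Monge--Amp\`ere) inequality, exactly as in the proof of Theorem~\ref{thm: lambda 0}, bounds the left-hand side below by $c_\varepsilon^{1/n}\int_X f^{1/n}e^{(n-1)h}\,\omega^n$, a fixed positive multiple of $c_\varepsilon^{1/n}$. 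For the lower bound, let $u\in\PSH(X,\theta)\subset\PSH(X,\omega_\varepsilon)$ be the subsolution of Lemma~\ref{lem: subsol big} with $(\omega_\varepsilon+dd^c u)^n\ge(\theta+dd^c u)^n\ge m_0 f\,\omega^n$; then $(\omega_\varepsilon+dd^c\psi_\varepsilon)^n=c_\varepsilon f\,\omega^n\le(c_\varepsilon/m_0)(\omega_\varepsilon+dd^c u)^n$, so Corollary~\ref{cor:MAconstant a} gives $c_\varepsilon\ge m_0$. Second, $\psi_\varepsilon$ is uniformly bounded, following the scheme of Theorem~\ref{Linfty est}: Skoda's uniform integrability (valid since $\psi_\varepsilon\in\PSH(X,A\omega)$ with $\sup\psi_\varepsilon=0$) yields $\delta_0>0$ and $p'\in(1,p)$ with $\|e^{-\delta_0\psi_\varepsilon}f\|_{p'}\le C\|f\|_p$; Lemma~\ref{lem: subsol big} applied to the normalized density $e^{-\delta_0\psi_\varepsilon}f/\|e^{-\delta_0\psi_\varepsilon}f\|_{p'}$ gives $w_\varepsilon\in\PSH(X,\theta)$ with $0\le w_\varepsilon\le1$ and $(\omega_\varepsilon+dd^c w_\varepsilon)^n\ge m_1e^{-\delta_0\psi_\varepsilon}f\,\omega^n$, $m_1$ uniform. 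Combining this with the equation for $\psi_\varepsilon$ and the bound $c_\varepsilon\le C_0$, one finds a uniform constant $b_0$ so that $W_\varepsilon:=w_\varepsilon+b_0$ obeys $(\omega_\varepsilon+dd^c\psi_\varepsilon)^n\le e^{\delta_0(\psi_\varepsilon-W_\varepsilon)}(\omega_\varepsilon+dd^c W_\varepsilon)^n$, and Corollary~\ref{cor:MAexp a} gives $\psi_\varepsilon\ge W_\varepsilon\ge b_0$. Hence $b_0\le\psi_\varepsilon\le 0$ uniformly in $\varepsilon$.

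Now pass to the limit. Choose $\varepsilon_k\downarrow 0$ with $c_{\varepsilon_k}\to c_*\in[m_0,C_0]$ and $\psi_{\varepsilon_k}\to\varphi$ in $L^1(X)$ and almost everywhere; then $\varphi\in\PSH(X,\theta)$ and $b_0\le\varphi\le 0$. Rewrite each equation as $(\omega_{\varepsilon_k}+dd^c\psi_{\varepsilon_k})^n=e^{\psi_{\varepsilon_k}}g_k\,\omega^n$ with $g_k:=c_{\varepsilon_k}e^{-\psi_{\varepsilon_k}}f$, so $g_k\to c_*e^{-\varphi}f$ in $L^1(X)$ by dominated convergence. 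As in the proof of Theorem~\ref{thm: lambda 0}, put $u_j:=\bigl(\sup_{k\ge j}\psi_{\varepsilon_k}\bigr)^*$, which is $(\theta+\varepsilon_j\omega)$-psh since so is every $\psi_{\varepsilon_k}$ with $k\ge j$, and $v_j:=P_\theta\bigl(\inf_{k\ge j}\psi_{\varepsilon_k}\bigr)$, well defined because $\theta\ge 0$ makes the constant $b_0$ a $\theta$-psh minorant; then $v_j\le\psi_{\varepsilon_k}\le u_j$ for $k\ge j$, $u_j\searrow u$, $v_j\nearrow v$, $v\le\varphi\le u$. Applying the semicontinuity lemmas of \cite{DDL4} (with the reference forms $\theta+\varepsilon_j\omega$, then letting $j\to+\infty$, the extra $\varepsilon_j\omega$ disappearing by Bedford--Taylor convergence), one obtains
\[
(\theta+dd^c u)^n\ge c_*e^{u-\varphi}f\,\omega^n\qquad\text{and}\qquad(\theta+dd^c v)^n\le c_*e^{v-\varphi}f\,\omega^n .
\]
For every $a>0$, on $\{v<u-a\}$ the first measure is at least $e^{a}$ times the second, so the domination principle (Theorem~\ref{thm: domination principle big}, with constant $e^{-a}<1$) yields $v\ge u-a$; letting $a\to 0$, $v\ge u$, hence $u=v=\varphi$, and the two inequalities above collapse to $(\theta+dd^c\varphi)^n=c_*f\,\omega^n$. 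Uniqueness of the constant is immediate: if $(\theta+dd^c\varphi_i)^n=c_if\,\omega^n$ for $i=1,2$ with $c_1\le c_2$, then $(\theta+dd^c\varphi_1)^n\le(c_1/c_2)(\theta+dd^c\varphi_2)^n$ and Corollary~\ref{cor:MAconstant} forces $c_1/c_2\ge 1$.

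The main obstacle is the passage to the limit: the reference forms $\omega_{\varepsilon_k}$ degenerate onto the merely big form $\theta$, so the Bedford--Taylor and envelope arguments of \cite{DDL4} must be run along the approximation, and one needs a genuinely \emph{strict} inequality between the two limiting Monge--Amp\`ere measures in order to invoke the domination principle --- this is precisely what the bookkeeping $(\omega_{\varepsilon_k}+dd^c\psi_{\varepsilon_k})^n=e^{\psi_{\varepsilon_k}}g_k\,\omega^n$ provides, the limiting densities then carrying the factors $e^{u-\varphi}$ and $e^{v-\varphi}$ that differ by $e^{u-v}\le e^{-a}$ on $\{v<u-a\}$. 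The accompanying uniform $L^\infty$-estimate, although modelled on Theorem~\ref{Linfty est}, must likewise be carried out with the degenerate subsolution of Lemma~\ref{lem: subsol big} rather than the one of Lemma~\ref{lem: subsol}.
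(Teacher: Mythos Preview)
Your proposal is correct and follows essentially the same route as the paper: approximate $\theta$ by $\theta+\varepsilon\omega$, solve via Theorem~\ref{MA zero case}, bound $c_\varepsilon$ above with the Gauduchon trick and below via Lemma~\ref{lem: subsol big} and Corollary~\ref{cor:MAconstant}, then pass to the limit using the $u_j,v_j$ envelope trick of Theorem~\ref{thm: lambda 0} combined with the domination principle for the big form. The paper's own proof is terse, saying only ``using the same trick as in Theorem~\ref{thm: lambda 0}'' for both the uniform $L^\infty$-bound and the convergence, whereas you have correctly spelled out the adaptations (using Lemma~\ref{lem: subsol big} in place of Lemma~\ref{lem: subsol} for the $L^\infty$-estimate, taking $P_\theta$ rather than $P_\omega$ for the lower envelopes, and invoking Theorem~\ref{thm: domination principle big} at the end).
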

\begin{proof}
    Uniqueness of the constant $c$ follows from Corollary \ref{cor:MAconstant}.  To prove the existence we approximate $\theta$ by $\theta_j=\theta + j^{-1}\omega$, and  thanks to Theorem \ref{MA zero case} we solve
    \[
    (\theta_j +dd^c u_j)^n = c_j f\omega^n, \; \sup_X u_j=0. 
    \]
Let $\omega_{g}$ be the unique Gauduchon metric in the conformal class of $\omega$ and write $\omega_{g}^n=e^{h} \omega^n$, for some smooth function $h$. The AM-GM inequality gives the mixed Monge-Amp\`ere inequality 
    \[
    (\theta_j+dd^c u_j) \wedge \omega_{g}^{n-1} \geq c_j^{1/n} f^{1/n} \exp \left ( \frac{(n-1)h}{n} \right) \omega^n.  
    \]
   Since $\int_X  (\theta_j+dd^c u_j) \wedge \omega_g^{n-1} = \int_X  \theta_j \wedge \omega_g^{n-1}\leq \int_X ( \theta+\omega) \wedge \omega_g^{n-1}$, integrating the above over $X$, we obtain an upper bound for $c_j$. Moreover, using Lemma \ref{lem: subsol big} and Corollary  \ref{cor:MAconstant}, we see that $c_j\geq c_0>0$. Thus, extracting a subsequence and using the same trick as in Theorem \ref{thm: lambda 0}, we get a solution $\varphi \in \PSH(X,\theta)\cap L^{\infty}$.  
\end{proof}

\begin{theorem}\label{thm: higher0}
    With the same assumptions as above and assume moreover that $f=e^{\psi^+-\psi^-}\in L^p$, $p>1$, where $\psi^{\pm}$ are quasi-psh functions smooth in $X\setminus E$, where $E$ is an analytic set. Then there is a bounded solution $\varphi$ to the equation 
    \[
    (\theta+dd^c \varphi)^n =e^{b+\psi^+-\psi^-}\omega^n
    \]
    which is smooth in $X\setminus (D\cup E)$. 
\end{theorem}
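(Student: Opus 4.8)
The plan is to obtain $\varphi$ as a limit of solutions of regularized equations, for which the estimates of the previous sections apply, and then to upgrade regularity away from $D\cup E$ by localizing the Laplacian estimate. Write $\Omega':=X\setminus(D\cup E)$; on $\Omega'$ both $\rho$ and $\psi^{\pm}$ are smooth, so $\log f=\psi^+-\psi^-$ is smooth there. Set $f_j:=f\star\rho_{1/j}+j^{-1}$, which is smooth and positive on $X$, has $\|f_j\|_p$ uniformly bounded (Young's inequality), and converges to $f$ in $C^k_{\mathrm{loc}}(\Omega')$ for all $k$ (since $f=e^{\psi^+-\psi^-}$ is smooth and bounded away from $0$ on compact subsets of $\Omega'$). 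Put $\theta_j:=\theta+j^{-1}\omega$, a decreasing family of Hermitian metrics with $\theta_j\searrow\theta$. Applying Theorem~\ref{thm: lambda 0} to the Hermitian metric $\theta_j$ and the smooth function $\log f_j$ gives a unique constant $b_j$ and a \emph{smooth} $u_j\in\PSH(X,\theta_j)$ with $(\theta_j+dd^c u_j)^n=c_j f_j\,\omega^n$, $c_j:=e^{b_j}$, and $\sup_X u_j=0$. Existence of \emph{some} bounded solution with a unique constant $c=e^b$ is in any case guaranteed by Theorem~\ref{thm: sol big}, so the real content is the interior regularity of the solution we construct.

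Next I would prove uniform bounds. First, $c_j$ is bounded above by the Gauduchon trick: if $\omega_h$ is the Gauduchon metric conformal to $\omega$, $\omega_h^n=e^h\omega^n$, then AM--GM gives $(\theta_j+dd^c u_j)\wedge\omega_h^{n-1}\ge (c_j f_j)^{1/n}e^{(n-1)h/n}\omega^n$; integrating, using $dd^c(\omega_h^{n-1})=0$ and $\int_X\theta_j\wedge\omega_h^{n-1}\le\int_X(\theta+\omega)\wedge\omega_h^{n-1}<+\infty$, together with $\liminf_j\int_X f_j^{1/n}e^{(n-1)h/n}\omega^n\ge\int_X f^{1/n}e^{(n-1)h/n}\omega^n>0$ (Fatou, $f>0$ a.e.), yields $c_j\le C$ for $j$ large. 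Lower bound for $c_j$: normalizing the density and using Lemma~\ref{lem: subsol big} produces $w_j\in\PSH(X,\theta)$, $0\le w_j\le1$, with $\theta_{w_j}^n\ge m' f_j\omega^n$; since $w_j$ is also $\theta_j$-psh, comparison via Corollary~\ref{cor:MAconstant} forces $c_j\ge m'>0$. For the uniform $L^\infty$-bound on $u_j$ I would combine Skoda's uniform integrability theorem (the family $\{u_j\}\subset\PSH(X,\theta+\omega)$ with $\sup_X u_j=0$ is compact) with Lemma~\ref{lem: subsol big} applied to $e^{-\varepsilon u_j}c_j f_j$ (its $L^{p'}$-norm is uniformly controlled, as in the proof of Theorem~\ref{Linfty est}): this gives a bounded $\theta$-psh subsolution $v_j\le0$ with $\theta_{j,v_j}^n\ge m' e^{-\varepsilon u_j}\theta_{j,u_j}^n$, which, using $v_j\le0$, rewrites as $\theta_{j,u_j}^n\le e^{\varepsilon(u_j-\tilde v_j)}\theta_{j,\tilde v_j}^n$ with $\tilde v_j=v_j+\varepsilon^{-1}\log m'\ge -C$. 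The exponential domination principle (Corollary~\ref{cor:MAexp}, with reference form $\theta_j$) then gives $u_j\ge\tilde v_j\ge -C$, so $-C\le u_j\le0$ uniformly.

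Now the interior estimates on $\Omega'$, which is the heart of the argument. On $\Omega'=X\setminus D$ the form $\theta+dd^c\rho\ge\delta\omega$ is a genuine Hermitian metric with locally uniformly bounded geometry (as $\rho$ is smooth there), and $\tilde\theta_j:=\theta_j+dd^c\rho$ converges to it smoothly on compact subsets while staying $\ge\delta\omega$. The function $\tilde u_j:=u_j-\rho$ is $\tilde\theta_j$-psh, locally bounded on $\Omega'$, and solves $(\tilde\theta_j+dd^c\tilde u_j)^n=c_j f_j\,\omega^n$ on $\Omega'$; moreover on any $K'\Subset\Omega'$ the density $\log(c_j f_j)$ is uniformly bounded in $C^2(K')$ (here we use both that $D$ and $E$ are avoided). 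Hence the interior version of the Laplacian estimate — the localization of the computation in Theorem~\ref{thm: Lap est} by a cutoff, cf.~\cite{TW10a} — gives $\Delta_\omega u_j\le C_K$ on each $K\Subset\Omega'$; this yields uniform ellipticity of the equation for $u_j$ on $K$, and then complex Evans--Krylov together with Schauder bootstrapping give uniform $C^k_{\mathrm{loc}}(\Omega')$ bounds for every $k$. Passing to a subsequence, $u_j\to\varphi_\infty$ in $C^\infty_{\mathrm{loc}}(\Omega')$ and, by compactness of normalized families in $\PSH(X,\theta+\omega)$, in $L^1(X)$, with $c_j\to c_\infty>0$.

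Finally I would conclude. The limit $\varphi_\infty$ is bounded ($-C\le\varphi_\infty\le0$), lies in $\PSH(X,\theta)$ (it is an $L^1$-limit of $\theta_j$-psh functions with $\theta_j\searrow\theta$), and is smooth on $\Omega'$. On $\Omega'$ the equation passes to the limit by $C^\infty_{\mathrm{loc}}$-convergence, giving $(\theta+dd^c\varphi_\infty)^n=c_\infty f\,\omega^n$ there. Since $D\cup E$ is contained in a proper analytic subset of $X$, hence pluripolar and Lebesgue-null, and since the Monge--Amp\`ere measure of a bounded $\theta$-psh function puts no mass on pluripolar sets (the capacity estimate used in the proof of Theorem~\ref{thm: orthog relation}), the identity extends to all of $X$: $(\theta+dd^c\varphi_\infty)^n=c_\infty f\,\omega^n=e^{b+\psi^+-\psi^-}\omega^n$ with $b:=\log c_\infty$ (and $b=\log c$ by uniqueness of the constant, Corollary~\ref{cor:MAconstant}); alternatively one identifies the limit by the $\sup^\ast$/envelope-of-$\inf$ argument of Theorem~\ref{thm: lambda 0} combined with the domination principle Theorem~\ref{thm: domination principle big}. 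Thus $\varphi_\infty$ is the desired bounded solution, smooth on $X\setminus(D\cup E)$. The main obstacle is precisely the third step: one must replace the global Laplacian estimate of Theorem~\ref{thm: Lap est} by an interior one, and — crucially — realize that $\theta+dd^c\rho$ supplies a genuine Hermitian reference metric with locally bounded geometry away from $D$, and that away from $E$ the density $\psi^+-\psi^-$ is smooth, so that the standard local Evans--Krylov--Schauder machinery applies; the uniform $L^\infty$-bound (Skoda plus subsolution plus exponential domination, in the degenerate setting) is the other technical point, but it parallels Theorems~\ref{Linfty est} and \ref{thm: sol big}.
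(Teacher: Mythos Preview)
Your overall architecture (regularize, uniform $c_j$ and $C^0$ bounds, higher-order estimates on $\Omega'$, pass to the limit) matches the paper's, and your $C^0$ step is essentially the same. The genuine divergence is in the regularization and the $C^2$ estimate, and here there is a real gap.

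The paper does \emph{not} convolve $f$; it applies Demailly's regularization to $\psi^+$ and $\psi^-$ separately, so that the approximants $\psi_\varepsilon^\pm$ remain quasi-psh with a uniform lower bound $dd^c\psi_\varepsilon^\pm\ge -K^\pm\omega$. This structure is used twice in the Laplacian estimate: (i) the Ricci term coming from $\mathrm{Ric}(\tilde\omega)=\mathrm{Ric}(\omega)-dd^c(\psi_\varepsilon^+-\psi_\varepsilon^-)$ produces a bad $-\mathrm{Tr}_\omega(dd^c\psi_\varepsilon^-)$ that cannot be bounded by a uniform constant; (ii) the paper absorbs it by putting $-2\delta\psi_\varepsilon^-$ into the barrier, taking $u=\varphi_\varepsilon-\rho-2\delta\psi_\varepsilon^-$, so that $-\gamma'(u)\Delta_{\tilde\omega}u$ contributes a compensating positive $\mathrm{Tr}_{\tilde\omega}(\omega+dd^c\psi_\varepsilon^-)$. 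The maximum of $H=\log\mathrm{Tr}_\omega(\tilde\omega)-\gamma(u)$ is then attained in $X\setminus D$ (since $-\rho\to+\infty$ on $D$), the value $H(x_0)$ is bounded \emph{independently of $\varepsilon$ and of where $x_0$ lies}, and one reads off $\mathrm{Tr}_\omega(\tilde\omega)\le Ce^{\gamma(u)}$, which is uniformly bounded on compacta of $X\setminus(D\cup E)$ because $u$ is. So the paper never localizes by a cutoff: it runs a \emph{global} maximum principle, and the ``interior'' nature of the bound comes solely from the dependence on $u$.

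Your route discards the quasi-psh structure of $\psi^-$ by convolving $f$, so neither (i) nor (ii) is available, and you are forced to invoke ``the interior version of the Laplacian estimate --- the localization of the computation in Theorem~\ref{thm: Lap est} by a cutoff, cf.~\cite{TW10a}''. That is the gap: Theorem~\ref{thm: Lap est} and \cite{TW10a} are global (compact manifold, no boundary), and no cutoff appears there; an interior $C^2$ estimate for the Hermitian Monge--Amp\`ere equation is a nontrivial statement in its own right (the torsion terms couple with the gradient of the cutoff and with $\partial\mathrm{Tr}_\omega(\tilde\omega)$), and it is neither proved in these notes nor supplied by the reference you cite. If you want to keep your strategy you must actually establish that interior estimate; the much simpler fix is to switch to Demailly regularization of $\psi^\pm$ and run the global argument with barrier $u=\varphi_j-\rho-2\delta\psi_{\varepsilon(j)}^-$ as the paper does, after which the $C^2$ step is self-contained with the tools already developed. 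Your $\sup^*/P_\omega(\inf)$ alternative for identifying the limiting equation is fine and is indeed the device used here.
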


Recall that $\rho \in \PSH(X,\theta)$ has analytic singularities, $\theta +dd^c \rho \geq \delta \omega$, for some $\delta>0$, and $D$ is an analytic set such that $\rho$ is smooth in $\Omega:=X\setminus D$.
\begin{proof}
    We proceed in several steps. 
    
\noindent  {\it Smooth approximation.}
 We assume without loss of generality that $\p^{\pm} \leq 0$. For a Borel function $g$ we let $g_{\e}:=\rho_{\e}(g)$ denote the Demailly regularization of $g$ (see \cite[(3.1)]{Dem94}):
 
\[
\rho_{\e}(g)(x) :=  \frac{1}{\e^{2n}} \int_{\zeta \in T_xX} g({\rm exph}_x(\zeta)) \chi\left (\frac{|\zeta|^2}{\e^2} \right ) d\lambda (\zeta). 
\]
  Since $\p^-$ is quasi-psh, the corresponding 
regularization $\p^-_{\e}$ satisfies $\p^- \leq \p^-_{\e}+A \e^2$,
 while $\p^+_{\e} \leq0$. 
 Moreover the functions $\p_\e^{\pm}$ are quasi-psh with
$dd^c \p_\e^{\pm} \geq -K^{\pm} \omega$ for uniform constants $K^{\pm} \geq 0$. In particular
$$
K^- \theta+ dd^c \left( \delta \p_\e^{-}+K^- \rho \right) \geq 
-K^-\delta \omega+K^- (\theta+dd^c \rho) = 0,
$$
so $\alpha \p_\e^{-}+ \rho \in \PSH(X, \theta)$ for all $0<\alpha\leq  \frac{\delta}{K^-}$. Up to replacing $\delta$ with $\frac{\delta}{K^-}$,  we assume in the sequel that $K^-=1$. 
 
\smallskip

We fix $0< \e \leq 1$ and set $\theta_{\e}:=\theta+\e \omega$.
 It follows from Theorem \ref{thm: lambda 0} that there exist unique constants $c_\e>0$ and  smooth
 $\theta_{\e}$-psh functions $\f_\e$   such that
 $$
 (\theta+\e \omega+dd^c \f_\e)^n=c_\e e^{\psi^+_{\e}-\psi^-_{\e}} \omega^n,
 $$
 with  $\sup_X \f_\e=0$.

\medskip

\noindent  {\it ${\mathcal C}^0$-estimates.} It follows from Jensen's inequality that $e^{\psi^+_{\varepsilon}-\psi^-_{\varepsilon}}\leq \rho_\varepsilon(f)$, hence $e^{\psi^+_{\varepsilon}-\psi^-_{\varepsilon}}$ is uniformly bounded in $L^p(\omega^n)$. From this and the same arguments in the beginning of the proof of Theorem \ref{thm: lambda 0} we can prove that $\varphi_{\varepsilon}$ is uniformly bounded and $c_{\varepsilon}$ is also uniformly bounded. 

  \medskip
 
\noindent  {\it ${\mathcal C}^2$-estimates.} 
In the sequel we establish a uniform bounds on $\Delta_{\omega} \f_\e$ on compact subsets of $\Omega = X \setminus (D \cup E)$.
 We follow the computations of \cite[Proof of Theorem 2.1]{TW10a} and \cite{To18} 
 with a twist in order to deal with unbounded functions. 
 We use $C$ to denote various uniform constants which may be different. Consider
 $$
 H:=\log {\rm Tr}_{\omega}(\tilde{\omega})- \gamma(u)
 $$
 where 
 $$
\tilde{\omega}=\theta_{\e}+dd^c \f_{\e},
 \; \;
 u= (\f_{\e}-\rho -2 \delta \psi^-_{\varepsilon} -\inf_X \varphi_{\varepsilon}+1 )>1, 
 $$
 and $\gamma: \mathbb{R} \rightarrow \mathbb{R}$ is a smooth concave increasing function such that $\gamma(+\infty)=+\infty$.
 We are going to show that $H$ is uniformly bounded from above for an appropriate choice of
 $\gamma$. Since $u$ is uniformly bounded on compact subsets of $\Omega$, this will
 yield for each $\e>0$ and $K \subset \subset \Omega$ a uniform bound
 $$
 \Delta_{\omega}(\f_{\e})=
 {\rm Tr}_{\omega}(\theta_{\e}+dd^c \f_{\e}) - {\rm Tr}_{\omega}(\theta_{\e}) \leq C_K.
 $$
 
 We let ${g}$ denote the Riemannian metric associated to $\omega$
 and $\tilde{g}$ the one associated to $\tilde{\omega}=\theta_\e+dd^c \f_{\e}$. 
 To simplify notations we will omit the subscript $\varepsilon$ in the sequel. Since $\psi^{\pm}$ are quasi-psh, up to multiplying $\omega$ with a large constant, we can assume that $\omega+dd^c \psi^{\pm} \geq 0$. 
 
 Since $\rho \rightarrow -\infty$ on 
 $E$
 the maximum of $H$ is attained at some point $x_0 \in X \setminus E$.  
 We use special coordinates at this point, the same we used in the proof of Theorem \ref{thm: Lap est}:
 \[
 g_{i\bar{j}} = \delta_{ij}, \; \;  
 \frac{\partial g_{i\bar{i}}}{\partial z_j}=0 
 \; \; \text{and}\; \; \tilde{g}_{i\bar{j}}\; \text{is diagonal}.
 \]
Using the exact same type of computations and the fact that 
$$
{\rm Ric}(\tilde{\omega})= {\rm Ric}(\omega) - dd^c (\psi^+-\psi^-) \leq C \omega + dd^c \psi^-,
$$
we obtain 
\begin{equation}\label{eq: C2 estimate 3}
 \Delta_{\tilde{\omega}}
 {\rm Tr}_{\omega} (\tilde{\omega}) 
\geq   I -C {\rm Tr}_{\omega} (\omega+dd^c \psi^-) -C {\rm Tr}_{\omega} (\tilde{\omega})  {\rm Tr}_{\tilde{\omega}}  (\omega) -C  \Tr_{\tilde{\omega}}(\omega),
\end{equation}
 with $I:=\sum_{i,j} \tilde{g}^{i \bar{i}}  \tilde{g}^{j \bar{j}} \tilde{g}_{i \bar{j}j}   \tilde{g}_{j \bar{i} \bar{j}}$.

Since $0 \geq \Delta_{\tilde{\omega}} H$  at $x_0$, it follows from \eqref{eq: C2 estimate 4}, \eqref{eq:CS trace} that
  \begin{flalign}\label{eq: C2 est final 1}
0 &\geq \Delta_{\tilde{\omega}} H
= 
 \frac{\Delta_{\tilde{\omega}} {\rm Tr}_{\omega} (\tilde{\omega})}{ {\rm Tr}_{\omega} (\tilde{\omega})}
 -\frac{| \partial {\rm Tr}_{\omega} (\tilde{\omega}) |^2_{\tilde{\omega}}}{({\rm Tr}_{\omega} (\tilde{\omega}))^2}
 -\gamma'(u) \Delta_{\tilde{\omega}}(u) - \gamma''(u) |\partial u|_{\tilde{\omega}}^2  \\
 &\geq  -\frac{C {\rm Tr}_{\omega} (\omega +dd^c \psi^-)}{{\rm Tr}_{\omega}(\tilde{\omega})} 
- \gamma'(u) (n - \delta{\rm Tr}_{\tilde{\omega}}( 3\omega+ 2dd^c \psi^- )) \nonumber \\
&-C \left (\frac{\gamma'(u)^2}{(-\gamma''(u))}+1\right)
\frac{{\rm Tr}_{\tilde{\omega}}(\omega)}{({\rm Tr}_{\omega} (\tilde{\omega}))^2}-C \frac{\Tr_{\tilde{\omega}}(\omega)}{\Tr_{\omega}(\tilde{\omega})} -C \Tr_{\tilde{\omega}}(\omega) .\nonumber
 \end{flalign}
 
We now choose (as in the proof of Theorem \ref{thm: Lap est}) the function $\gamma$ as 
   $$
 \gamma(u):= \frac{C+1}{\min(\delta,1)} u + \ln(u)
 $$
and we recall that, since $u\geq 1$, 
 $$
 \frac{C+1}{\min(\delta,1)} \leq \gamma'(u) \leq 1+ \frac{C+1}{\min(\delta,1)} 
\; \; \; \text{ and }\;  \; \;
  \frac{\gamma'(u)^2}{|\gamma''(u)|} +1 \leq C_1u^2.
 $$
By incorporating  this into \eqref{eq: C2 est final 1}  we obtain
\begin{flalign*}
	0 &\geq -\frac{C {\rm Tr}_{\omega} (\omega +dd^c \psi^-)}{{\rm Tr}_{\omega}(\tilde{\omega})} -C_2+ (C+1)(\Tr_{\tilde{\omega}}(\omega) + \Tr_{\tilde{\omega}}(\omega+dd^c \psi^-))  \\
	&-C_2 (u^2+1)
\frac{{\rm Tr}_{\tilde{\omega}}(\omega)}{({\rm Tr}_{\omega} (\tilde{\omega}))^2}-C \frac{\Tr_{\tilde{\omega}}(\omega)}{\Tr_{\omega}(\tilde{\omega})} -C \Tr_{\tilde{\omega}}(\omega).
\end{flalign*}
Using
  ${\rm Tr}_{\omega} (\omega +dd^c \psi^-) \leq {\rm Tr}_{\tilde{\omega}} (\omega +dd^c \psi^-){\rm Tr}_{\omega} (\tilde{\omega})$ we thus arrive at 
 \begin{equation}
 	\label{eq: C2 est final 2 semipositive}
 	 0 \geq {\rm Tr}_{\tilde{\omega}}(\omega) - C_2(u^2+1) \frac{{\rm Tr}_{\tilde{\omega}}(\omega)}{({\rm Tr}_{\omega} (\tilde{\omega}))^2}-C\frac{\Tr_{\tilde{\omega}}(\omega)}{\Tr_{\omega}(\tilde{\omega})}- C_2. 
 \end{equation}
 
\noindent  At the point $x_0$ we have the following alternative:
\begin{itemize}
\item  if ${\rm Tr}_{\omega}(\tilde{\omega})^2 \geq 4C_2 (u^2+1)+(4C)^2$ then 
$$
C_2(u^2+1)\frac{{\rm Tr}_{\tilde{\omega}}(\omega)}{({\rm Tr}_{\omega} (\tilde{\omega}))^2} \leq \frac{{\rm Tr}_{\tilde{\omega}}(\omega)}{4}
\; \; \text{and} \; \;
 C\frac{\Tr_{\tilde{\omega}}(\omega)}{\Tr_{\omega}(\tilde{\omega})} \leq \frac{{\rm Tr}_{\tilde{\omega}}(\omega)}{4},
$$
hence from \eqref{eq: C2 est final 2 semipositive} we get  ${\rm Tr}_{\tilde{\omega}}(\omega) \leq 2C_2$. 
Now 
\begin{equation*}
\Tr_{\omega}(\tilde{\omega}) 
\leq n \frac{\tilde{\omega}^n}{\omega^n} \left( \Tr_{\tilde{\omega}}(\omega) \right)^{n-1}
\leq  n(2C_2)^{n-1} c e^{\psi^+-\psi^-}
\end{equation*}
yields ${\rm Tr}_{\omega}(\tilde{\omega}) \leq Ce^{\psi^+-\psi^-}$.
It follows that
\begin{flalign*}
H(x_0) &\leq \log (2C) - \psi^- -\frac{C+1}{\min(\delta,1)} (\f - \rho - 2\delta \psi^-)\\
&\leq  	\log (2C) - (C+1) (\f -\rho -\delta \psi^-) 
\leq C_3. 
\end{flalign*}
\item   If ${\rm Tr}_{\omega}(\tilde{\omega})^2 \leq 4C_2 (u^2+1)+(4C)^2$ then
$$
 H(x_0) \leq \log \sqrt{4C_2(u^2+1) +(4C)^2}- \gamma(u) \leq C_4.
 $$
 \end{itemize}
Thus $H(x_0)$ is uniformly bounded from above, yielding the desired estimate.

\medskip

\noindent  {\it Higher order estimates.}
With uniform bounds on $||\Delta_{\omega} \f_\e||_{L^{\infty}(K)}$ in hands, we
can use a complex version of Evans-Krylov-Trudinger 
 estimate
(see \cite[Section 4]{TW10a}) and eventually
differentiate the equation to obtain 
-using Schauder estimates-
uniform bounds,
 for each $K \subset \subset \Omega$, $0 < \beta<1$, $j \geq 0$,
$$
\sup_{\e>0} ||\f_{\e}||_{{\mathcal C}^{j,\beta}(K)}=C_{j,\beta}(K) <+\infty,
$$
 which guarantee that $\f_{\e}$ is relatively compact in ${\mathcal C}^{\infty}(\Omega)$.
 
 We now extract a subsequence $\e_j \rightarrow 0$ such that
 \begin{itemize}
 \item $c_{\e_j} \longrightarrow  c>0$;
 \item $\f_{\e_j} \rightarrow \f$ in $L^1$ with $\f \in \PSH(X,\theta)$ and $\sup_X \f=0$ (Hartogs lemma);
 \item $\f \in {\mathcal C}^{\infty}(\Omega)$ with $(\theta+dd^c \f)^n=c f\omega^n$ in $\Omega$;
 \item $|\f|\leq C$ in $X$.
 \end{itemize}
\end{proof}

\section{Singular Calabi-Yau metrics}

Let $V$ be a compact complex variety with {\it log-terminal singularities}, i.e. 
$V$ is  a normal complex space such that the canonical bundle $K_V$
is $\Q$-Cartier and for some (equivalently any) resolution of singularities
$\pi:X \rightarrow V$, we have
$$
K_X=\pi^* K_V+\sum_i a_i E_i,
$$
where the $E_i$'s are exceptional divisors with simple normal crossings,
and the rational coefficients $a_i$ 
(the discrepancies) satisfy $a_i>-1$.

Given $\phi$ a smooth metric of $K_V$ and $\sigma$ a
non vanishing  local holomorphic section
of $K_V^{\otimes r}$, we consider the ``adapted volume form''
$$
\mu_{\phi}:=\left( \frac{i^{rn^2} \sigma \wedge \overline{\sigma}}{|\sigma|^2_{r\phi}} \right)^{\frac{1}{r}}.
$$
This measure is independent of the choice of $\sigma$, and it has finite mass on $V$, since
the singularities are  log-terminal.
Given $\omega_V$ a hermitian form  on $V$ (see \cite[Section 16.3.1.2]{GZbook}) for a definition of a smooth form on a singular normal variety), there exists a unique  metric  $\phi=\phi(\omega_V)$ 
of $K_V$ such that 
$$
\omega_V^n=\mu_{\phi}.
$$
\begin{definition}
The Chern-Ricci curvature form of $\omega_V$ is ${\rm Ric}(\omega_V):=-dd^c \phi$.    
\end{definition}

Recall that the Bott-Chern space $H_{BC}^{1,1}(V,\R)$
is the space of closed real $(1,1)$-forms modulo the image of $dd^c$ acting on real functions.
The form ${\rm Ric}(\omega_V)$ determines a class $c_1^{BC}(V)$ which 
maps to the usual Chern class $c_1(V)$ under the natural surjection $H_{BC}^{1,1}(V,\R) \rightarrow H^{1,1}(V,\R)$.

By analogy with the Calabi conjecture in K\"ahler geometry, it is natural to wonder conversely, whether
any representative $\eta \in c_1^{BC}(V)$ can be realised as the Ricci curvature
form of a hermitian metric $\omega_V$.
We provide a positive answer, as a consequence of Theorem \ref{thm: sol big} and Theorem \ref{thm: higher0}:

\begin{theorem} \label{thm:calabi}
Let $V$ be a compact hermitian variety with log terminal singularities
equipped with a hermitian form $\omega_V$.
For every smooth closed real $(1,1)$-form   $\eta$ in 
$c_1^{BC}(V)$, there exists a unique
function $\f \in \PSH(V,\omega_V)$ 
such that
\begin{itemize}
\item $\f$ is globally bounded on $V$ and smooth in $V_{reg}$;
\item $\omega_V+dd^c \f$ is a hermitian form  and ${\rm Ric}(\omega_V+dd^c \f)=\eta$ in $V_{reg}$.
\end{itemize}
\end{theorem}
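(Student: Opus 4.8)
The plan is to transfer the problem to a resolution of singularities $\pi\colon X\to V$ and reduce it to the degenerate Monge--Amp\`ere theory of the previous section, i.e. to Theorem~\ref{thm: sol big} and Theorem~\ref{thm: higher0}. For the reduction, let $\phi=\phi(\omega_V)$ be the metric on $K_V$ with $\omega_V^n=\mu_\phi$, so that $\Ric(\omega_V)=-dd^c\phi$ represents $c_1^{BC}(V)$. Since $\eta$ also lies in $c_1^{BC}(V)$, write $\eta=\Ric(\omega_V)+dd^c G$ with $G$ a smooth function on $V$. From the defining property of the adapted volume form one has, for a bounded $\omega_V$-psh $\varphi$ smooth on $V_{reg}$,
\[
\phi(\omega_V+dd^c\varphi)-\phi(\omega_V)=\log\frac{(\omega_V+dd^c\varphi)^n}{\omega_V^n}\quad\text{on }V_{reg},
\]
hence $\Ric(\omega_V+dd^c\varphi)=\eta$ on $V_{reg}$ \emph{as soon as} $\varphi$ solves $(\omega_V+dd^c\varphi)^n=e^{\,b-G}\,\omega_V^n$ on $V_{reg}$ for some constant $b\in\R$. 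So it suffices to produce such a pair $(\varphi,b)$ with $\varphi$ bounded, smooth on $V_{reg}$, and $\omega_V+dd^c\varphi>0$ on $V_{reg}$.

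\emph{Transfer to $X$.} Set $\theta:=\pi^*\omega_V$, a smooth semipositive $(1,1)$-form (not necessarily closed) which is big: one can choose $\rho\in\PSH(X,\theta)$ with analytic singularities along the exceptional locus $E=\bigcup_iE_i$ and $\theta+dd^c\rho\ge\delta\omega$ for some $\delta>0$, where $\omega$ is a fixed hermitian metric on $X$ (standard, using that the exceptional divisor of $\pi$ is $\pi$-negative, see e.g. \cite{EGZ09}). A direct local computation from $K_X=\pi^*K_V+\sum_ia_iE_i$ and the definition of $\mu_\phi$ shows that the pulled-back measure $\mu:=e^{-G\circ\pi}\,\pi^*(\omega_V^n)$ has density
\[
f:=\frac{\mu}{\omega^n}=e^{-G\circ\pi}\,h\,\prod_i|s_i|^{2a_i},\qquad h\in C^\infty(X),\ h>0,
\]
$s_i$ a holomorphic section cutting out $E_i$. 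Because all discrepancies satisfy $a_i>-1$ (this is where log-terminality enters), $f\in L^p(X,\omega^n)$ for some $p>1$; moreover $f=e^{\psi^+-\psi^-}$ with $\psi^+:=-G\circ\pi+\log h+\sum_{a_i>0}2a_i\log|s_i|$ and $\psi^-:=\sum_{a_i<0}(-2a_i)\log|s_i|$, both quasi-psh on $X$ and smooth on $X\setminus E$.

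\emph{Solve, descend, uniqueness.} By Theorem~\ref{thm: sol big} there are a unique constant $c>0$ and a bounded $\theta$-psh function $u$ with $(\theta+dd^cu)^n=cf\,\omega^n=c\mu$; by Theorem~\ref{thm: higher0} (with the roles of $D$ and $E$ both played by the exceptional locus, which we may arrange) $u$ is smooth on $X\setminus E=\pi^{-1}(V_{reg})$. Since $\pi$ is an isomorphism over $V_{reg}$ and $u$ is bounded, $u$ descends to $\varphi\in\PSH(V,\omega_V)\cap L^\infty$, smooth on $V_{reg}$, with $(\omega_V+dd^c\varphi)^n=c\,e^{-G}\,\omega_V^n>0$ on $V_{reg}$; being semipositive with strictly positive top power, $\omega_V+dd^c\varphi$ is a genuine hermitian form there. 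Taking $b:=\log c$ gives the equation above, hence $\Ric(\omega_V+dd^c\varphi)=\eta$ on $V_{reg}$. Uniqueness (of the pair $(\varphi,b)$, equivalently of $\varphi$ once normalized by $\sup_V\varphi=0$) follows from Corollary~\ref{cor:MAconstant} and the domination principle (Theorem~\ref{thm: domination principle big}, cf. Corollary~\ref{cor:MAexp}), exactly as in the uniqueness part of Theorem~\ref{thm: lambda 0}.

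\emph{Main obstacle.} The genuinely hard analysis --- the $L^\infty$ estimate and the interior Laplacian and higher order estimates on $X\setminus E$ --- is already packaged in Theorems~\ref{thm: sol big} and~\ref{thm: higher0}. The delicate point in the present reduction is the geometric bookkeeping: verifying that the pulled-back adapted volume form is an $L^p$ density carrying the $e^{\psi^+-\psi^-}$ structure required by Theorem~\ref{thm: higher0} --- the condition $a_i>-1$ being used essentially both for $L^p$-integrability and for $\psi^-$ to be quasi-psh --- and arranging the big form $\theta$ so that its singular locus and that of the density both reduce to the exceptional locus, which is what delivers smoothness on all of $V_{reg}$ rather than on a smaller set.
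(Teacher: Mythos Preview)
Your argument is correct and follows essentially the same route as the paper's proof: reduce the Ricci equation to a Monge--Amp\`ere equation, pull it back to a log resolution where the density acquires the $e^{\psi^+-\psi^-}$ structure required by Theorem~\ref{thm: higher0}, and then invoke Theorems~\ref{thm: sol big} and~\ref{thm: higher0} to get a bounded solution smooth away from the exceptional locus, which descends to $V$. One small correction to your closing remark: the log-terminal condition $a_i>-1$ is needed only for the $L^p$-integrability of the density; the function $\psi^-=\sum_{a_i<0}(-2a_i)\log|s_i|$ is quasi-psh simply because the coefficients $-2a_i$ are positive, which holds for any $a_i<0$.
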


In particular if $c_1^{BC}(V)=0$, any hermitian form $\omega_V$ is ``$dd^c$-cohomologous'' to a 
Ricci flat hermitian current. Understanding the asymptotic behavior of these singular Ricci flat currents
near the singularities of $V$ is, as in the K\"ahler case, an important open problem.

\begin{proof}
It is classical that solving the (singular) Calabi conjecture is equivalent to
solving a complex Monge-Amp\`ere equation.
We let $\pi:X \rightarrow V$ denote a log-resolution of singularities
and observe that
$$
\pi^* \mu_{\phi}=f dV,
\; \; \text{ where } \; \; 
f=\prod_{i=1}^k |s_i|^{2a_i}
$$
has poles (corresponding to $a_i<0$) or zeros (corresponding to $a_i>0$)
 along the exceptional divisors $E_i=(s_i=0)$ and
$dV$ is a smooth volume form on $X$.

We set $\p^+=\sum_{a_i>0} 2 a_i\log|s_i|$, $\p^-=\sum_{a_i<0} 2 |a_i|\log|s_i|$,
and fix $\phi$ a smooth metric of $K_V$ such that $\eta=-dd^c \phi$.
Finding $\omega_V+dd^c \f$ such that 
${\rm Ric}(\omega_V+dd^c \f)=\eta$
is thus equivalent to solving the Monge-Amp\`ere equation
$(\omega_V+dd^c \f)^n=c \mu_{\phi}$.
Passing to the resolution this boils down to solve
$$
(\theta+dd^c \tilde{\f})^n
=c e^{\p^+-\p^-} dV
$$
on $X$, where $\theta=\pi^* \theta_V$ and $\tilde{\f}=\f \circ \pi \in \PSH(X,\theta)$.

Since $\theta$ is semi-positive and big, and since $\p^{\pm}$ are quasi-plurisubharmonic functions
which are smooth in $X^0=\pi^{-1}(V_{reg})$, it follows from
Theorem \ref{thm: sol big} and Theorem \ref{thm: higher0}
that there exists a  solution
$\tilde{\f}$ with all the required properties.
Then $\f=\pi_* \tilde{\f}$ is the function we were looking for.
\end{proof}

\bibliographystyle{alpha}
\bibliography{Biblio}

\end{document}